\newcommand*{\MRref}[2]{ \href{http://www.ams.org/mathscinet-getitem?mr=#1}{MR \textbf{#1}}}
\newcommand*{\arxiv}[1]{\href{http://www.arxiv.org/abs/#1}{arXiv: #1}}
\newtheorem{theorem}{Theorem}[section]
\newtheorem{corollary}[theorem]{Corollary}
\newtheorem{lemma}[theorem]{Lemma}
\newtheorem{proposition}[theorem]{Proposition}
\theoremstyle{remark}
\newtheorem{example}[theorem]{Example}
\newtheorem{remark}[theorem]{Remark}
\theoremstyle{definition}
\newtheorem{definition}[theorem]{Definition}
\DeclareMathOperator{\HH}{HH}
\DeclareMathOperator{\HC}{HC}
\DeclareMathOperator{\HP}{HP}
\DeclareMathOperator{\HHchain}{\mathsf{HH}}
\DeclareMathOperator{\HPchain}{\mathsf{HP}}
\DeclareMathOperator{\Hom}{Hom}
\DeclareMathOperator{\Prec}{Cpt}
\DeclareMathOperator{\diss}{diss}
\DeclareMathOperator{\im}{im}
\DeclareMathOperator{\cone}{cone}
\newcommand*{\nb}{\nobreakdash}
\newcommand*{\blank}{\textup{\textvisiblespace}}
\newcommand*{\defeq}{\mathrel{\vcentcolon=}}
\newcommand*{\mono}{\rightarrowtail}
\newcommand*{\epi}{\twoheadrightarrow}
\newcommand*{\prot}{\mathbin{\hat\otimes_\pi}}
\newcommand*{\hot}{\mathbin{\hat\otimes}}
\newcommand*{\abs}[1]{\lvert#1\rvert}
\newcommand*{\N}{\mathbb N}
\newcommand*{\Z}{\mathbb Z}
\newcommand*{\Q}{\mathbb Q}
\newcommand*{\R}{\mathbb R}
\newcommand*{\C}{\mathbb C}
\newcommand*{\Unit}{\mathds 1}
\newcommand*{\Sphere}{\mathbb S}
\newcommand*{\Cinf}[1][\infty]{\textup C^{#1}}
\newcommand*{\Ccinf}[1][\infty]{\textup C_\textup c^{#1}}
\newcommand*{\Whitney}[1][\infty]{\mathcal E^{#1}}
\newcommand*{\Flat}[3][^\infty]{\mathcal J{#1}(#3;#2)}
\newcommand*{\Cat}{\mathcal C}
\newcommand*{\Abcat}{\mathcal A}
\newcommand*{\Exten}{\mathcal E}
\newcommand*{\Split}{\mathcal E_\oplus}
\newcommand*{\Ab}{\mathfrak{Ab}}
\newcommand*{\Frech}{\mathfrak{Fr}}
\newcommand*{\Born}{\mathfrak{Bor}}
\newcommand*{\Top}{\mathfrak{TVS}}
\newcommand*{\Indban}{\overrightarrow{\mathfrak{Ban}}}
\newcommand*{\Proban}{\overleftarrow{\mathfrak{Ban}}}
\newcommand*{\Indcat}{\overrightarrow{\Cat}}
\newcommand*{\Procat}{\overleftarrow{\Cat}}
\newcommand*{\Id}{\textup{Id}}
\newcommand*{\diff}{\textup d}
\newcommand*{\cpt}{\textup c}
\newcommand*{\dR}{\textup{dR}}
\newcommand*{\Sym}[1]{\textup S_{#1}}
\newcommand*{\leftsub}{\textup l}
\newcommand*{\rightsub}{\textup r}
\newcommand*{\Tang}{\textup T}
\newcommand*{\Ho}{\textup H}
\newcommand*{\Hor}{\mathbf H}
\begin{document}

\title[Excision in Hochschild and cyclic homology]{Excision in Hochschild and cyclic homology\\ without continuous linear sections}
\author{Ralf Meyer}
\email{rameyer@uni-math.gwdg.de}
\address{Mathematisches Institut and Courant Centre ``Higher order structures''\\
  Georg-August Universit\"at G\"ottingen\\
  Bunsenstra{\ss}e 3--5\\
  37073 G\"ottingen\\
  Germany}
\subjclass[2000]{19D55}

\begin{abstract}
  We prove that continuous Hochschild and cyclic homology satisfy excision for extensions of nuclear H\nb-unital Fr\'echet algebras and use this to compute them for the algebra of Whitney functions on an arbitrary closed subset of a smooth manifold.  Using a similar excision result for periodic cyclic homology, we also compute the periodic cyclic homology of algebras of smooth functions and Whitney functions on closed subsets of smooth manifolds.
\end{abstract}
\thanks{Supported by the German Research Foundation (Deutsche Forschungsgemeinschaft (DFG)) through the Institutional Strategy of the University of G\"ottingen}
\maketitle
\tableofcontents

\section{Introduction}
\label{sec:intro}

Hochschild and cyclic homology are invariants of non-commutative algebras that generalise differential forms and de Rham cohomology for smooth manifolds~\cite{Connes:Noncommutative_Diffgeo}.  More precisely, let \(A\defeq \Cinf(X)\) for a smooth manifold~\(X\) be the Fr\'echet algebra of smooth functions on a smooth manifold~\(X\) (we impose no growth condition at infinity).  Its continuous Hochschild homology \(\HH_n(A)\) for \(n\in\N\) is naturally isomorphic to the space of differential \(n\)\nb-forms \(\Omega^n(X)\) on~\(X\).  Its continuous periodic cyclic homology \(\HP_n(A)\) for \(n\in\Z/2\) is naturally isomorphic to the de Rham cohomology of~\(X\) made \(2\)\nb-periodic:
\[
\HP_n\bigl(\Cinf(X)\bigr) \cong \bigoplus_{k\in\Z} \Ho_\dR^{n-2k}(X).
\]
And its continuous cyclic homology \(\HC_n(A)\) interpolates between these two:
\[
\HC_n\bigl(\Cinf(X)\bigr) \cong
\frac{\Omega^n(X)}{\diff\bigl(\Omega^{n-1}(X)\bigr)} \oplus
\bigoplus_{k=1}^\infty {}\Ho_\dR^{n-2k}(X),
\]
where \(\diff\colon \Omega^{n-1}(X) \to \Omega^n(X)\) denotes the de Rham boundary map.  The corresponding continuous cohomology groups \(\HH^n(A)\), \(\HP^n(A)\) and \(\HC^n(A)\) are naturally isomorphic to the topological dual spaces of these Fr\'echet spaces; in particular, \(\HH^n(A)\) is isomorphic to the space of de Rham \(n\)\nb-currents on~\(X\).  Alain Connes~\cite{Connes:Noncommutative_Diffgeo} proves these cohomological results with an explicit projective \(A\)\nb-bimodule resolution of~\(A\).  The same method yields their homological analogues.

Recently, Jean-Paul Brasselet and Markus Pflaum~\cite{Brasselet-Pflaum:Whitney} extended these computations to the algebra of Whitney functions on certain regular subsets of~\(\R^n\).  The proof is quite complicated because the possible singularities of such subsets make it much harder to write down projective bimodule resolutions.  Here we use excision theorems to compute these invariants for the algebra of Whitney functions on \emph{any} closed subset of a smooth manifold.  Our proof is shorter and removes the technical assumptions in~\cite{Brasselet-Pflaum:Whitney}.

Let~\(Y\) be a closed subset of the smooth manifold~\(X\) and let \(\Flat{Y}{X}\) be the closed ideal in \(\Cinf(X)\) consisting of all functions that have vanishing Taylor series at all points of~\(Y\) in some -- hence in any -- local coordinate chart.  The algebra of Whitney functions on~\(Y\) is the Fr\'echet algebra
\[
\Whitney(Y) \defeq \Cinf(X) \bigm/ \Flat{Y}{X}.
\]
It may depend on the embedding of~\(Y\) into~\(X\).  Since \(\Whitney(Y)\) is defined to fit into an extension of Fr\'echet algebras
\begin{equation}
  \label{eq:Whitney_extension}
  \Flat{Y}{X} \mono \Cinf(X) \epi \Whitney(Y),
\end{equation}
we may hope to compute its invariants using the Excision Theorem of Mariusz Wodzicki~\cite{Wodzicki:Excision}, which provides natural long exact sequences in Hochschild, cyclic, and periodic cyclic homology for suitable algebra extensions.

The only issue is whether the Excision Theorem applies to the extension~\eqref{eq:Whitney_extension} because it need not have a continuous linear section, and such a section is assumed by previous excision statements about continuous Hochschild (co)homology of topological algebras; this is why Brasselet and Pflaum use another approach.

Our main task is, therefore, to formulate an Excision Theorem for continuous Hochschild homology that does not require continuous linear sections.  That such a theorem exists has long been known to the experts.  Mariusz Wodzicki stated a special case in \cite{Wodzicki:Cyclic_pdo}*{Proposition 3} and announced general results for topological algebras in \cite{Wodzicki:Excision}*{Remark 8.5.(2)}, which were, however, never published.  The proof of Wodzicki's Excision Theorem by Jorge and Juan Guccione~\cite{Guccione-Guccione:Excision} works in great generality and, in fact, applies to the extension~\eqref{eq:Whitney_extension}, but such generalisations are not formulated explicitly in~\cite{Guccione-Guccione:Excision}.

The example of the algebra of Whitney functions has motivated me to finally state and prove such a general excision theorem here.  I work in a rather abstract categorical setup to avoid further embarrassments with insufficient generality.

The situation in~\cite{Guccione-Guccione:Excision} is a ring extension \(I\mono E\epi Q\) that is \emph{pure}, that is, remains exact after tensoring with another Abelian group, and has an \emph{H\nb-unital} kernel~\(I\).  We generalise the notions of purity and H\nb-unitality to algebras in an additive symmetric monoidal category \((\Cat,\otimes)\) with an exact category structure, that is, a class of distinguished extensions~\(\Exten\), which we call \emph{conflations} following the notation of \cites{Keller:Chain_stable, Keller:Handbook}.  It is routine to check that the arguments in~\cite{Guccione-Guccione:Excision} still work in this generality.

Then we specialise to the category of Fr\'echet spaces with the complete projective topological tensor product and the class of all extensions in the usual sense.  We check that an extension of Fr\'echet spaces with nuclear quotient is pure and that the algebra \(\Flat{Y}{X}\) is H\nb-unital in the relevant sense, so that our Excision Theorem applies to~\eqref{eq:Whitney_extension}.  Excision in Hochschild homology also implies excision in cyclic and periodic cyclic homology.  Thus we compute all three homology theories for the algebra of Whitney functions.

The case of Fr\'echet algebras is our main application.  We also discuss algebras in the categories of inductive or projective systems of Banach spaces, which include complete convex bornological algebras and complete locally convex topological algebras.  For instance, this covers the case of Whitney functions with compact support on a non-compact closed subset of a smooth manifold, which is an inductive limit of nuclear Fr\'echet algebras.

The continuous Hochschild cohomology \(\HH^n(A,A)\) of a Fr\'echet algebra~\(A\) with coefficients in~\(A\) viewed as an \(A\)\nb-bimodule is used in deformation quantisation theory.  For \(A=\Cinf(X)\), this is naturally isomorphic to the space of smooth \(n\)\nb-vector fields on~\(X\), that is, the space of smooth sections of the vector bundle~\(\Lambda^n(\Tang X)\) on~\(X\).  The method of Brasselet and Pflaum~\cite{Brasselet-Pflaum:Whitney} also allows to compute this for the algebra of Whitney functions on sufficiently nice subsets of~\(\R^n\).  I have tried to reprove and generalise this using excision techniques, but did not succeed because purity of an extension is not enough for \emph{cohomological} computations.  While it is likely that the Hochschild cohomology for the algebra of Whitney functions is always the space of Whitney \(n\)\nb-vector fields, excision techniques only yield the corresponding result about \(\HH^n(A,A_k)\), where~\(A_k\) is the \emph{Banach} space of \(k\)\nb-times differentiable Whitney functions, viewed as a module over the algebra~\(A\) of Whitney functions.

\section{Preparations: homological algebra and functional analysis}
\label{sec:homological_algebra_fun}

The abstract framework to define and study algebras and modules are additive \emph{symmetric monoidal categories} (see~\cite{Saavedra:Tannakiennes}).  We discuss some examples of such categories: Abelian groups with their usual tensor product, Fr\'echet spaces with the complete projective tensor product, and inductive or projective systems of Banach spaces with the canonical extensions of the projective Banach space tensor product.

The additional structure of an \emph{exact category} specifies a class of \emph{conflations} to be used instead of short exact sequences.  This allows to do homological algebra in non-Abelian additive categories.  The original axioms by Daniel Quillen~\cite{Quillen:Higher_K} are simplified slightly in~\cite{Keller:Chain_stable}.  We need non-Abelian categories because Fr\'echet spaces or bornological vector spaces do not form Abelian categories.  We describe some natural exact category structures for Abelian groups, Fr\'echet spaces, and inductive or projective systems of Banach spaces.  We also introduce pure conflations~-- conflations that remain conflations when they are tensored with an object.

We show that extensions of nuclear Fr\'echet spaces are always pure and are close to being split in at least two different ways: they remain exact when we apply the functors \(\Hom(V,\blank)\) or \(\Hom(\blank,V)\) for a Banach space~\(V\).  This is related to useful exact category structures on categories of inductive and projective systems.

\subsection{Some examples of symmetric monoidal categories}
\label{sec:examples_smc}

An additive \emph{symmetric monoidal category} is an additive category with a bi-additive tensor product operation~\(\otimes\), a unit object~\(\Unit\), and natural isomorphisms
\begin{equation}
  \label{eq:coherence_tensor}
  (A\otimes B)\otimes C \cong A\otimes (B\otimes C),\qquad A\otimes B\cong B\otimes A,\qquad \Unit\otimes A\cong A\cong A\otimes\Unit
\end{equation}
that satisfy several compatibility conditions (see~\cite{Saavedra:Tannakiennes}), which we do not recall here because they are trivial to check in the examples we are interested in (we do not even specify the natural transformations in the examples below because they are so obvious).  Roughly speaking, the tensor product is associative, symmetric, and monoidal up to coherent natural isomorphisms.  We omit the tensor product, unit object, and the natural isomorphisms above from our notation and use the same notation for a symmetric monoidal category and its underlying category.  The unit object is determined uniquely up to isomorphism, anyway.

The following are examples of additive symmetric monoidal categories:
\begin{itemize}
\item Let~\(\Ab\) be the category of Abelian groups with its usual tensor product~\(\otimes\), \(\Unit=\Z\), and the obvious natural isomorphisms~\eqref{eq:coherence_tensor}.

\item Let~\(\Frech\) be the category of Fr\'echet spaces, that is, metrisable, complete, locally convex topological vector spaces, with continuous linear maps as morphisms.  Let \(\otimes\defeq\prot\) be the complete projective topological tensor product (see~\cite{Grothendieck:Produits}).  Here~\(\Unit\) is~\(\C\) (it would be~\(\R\) if we used real vector spaces).

\item Let~\(\Born\) be the category of complete, convex bornological vector spaces (see~\cite{Hogbe-Nlend:Bornologies}) with bounded linear maps as morphisms.  In the following, all bornological vector spaces are tacitly required to be complete and convex.  Let \(\otimes\defeq\hot\) be the complete projective bornological tensor product (see~\cite{Hogbe-Nlend:Completions}) and let \(\Unit=\C\) once again.

\item Let~\(\Indban\) be the category of inductive systems of Banach spaces.  Let~\(\otimes\) be the canonical extension of the complete projective topological tensor product for Banach spaces to~\(\Indban\): if \((A_i)_{i\in I}\) and \((B_j)_{j\in J}\) are inductive systems of Banach spaces, then \((A_i)_{i\in I} \otimes (B_j)_{j\in J} \defeq (A_i\prot B_j)_{i,j\in I\times J}\).  The unit object is the constant inductive system~\(\C\).

\item Let~\(\Proban\) be the category of projective systems of Banach spaces.  Let~\(\otimes\) be the canonical extension of the complete projective topological tensor product for Banach spaces to~\(\Proban\): if \((A_i)_{i\in I}\) and \((B_j)_{j\in J}\) are projective systems of Banach spaces, then \((A_i)_{i\in I} \otimes (B_j)_{j\in J} \defeq (A_i\prot B_j)_{i,j\in I\times J}\).  The unit object is the constant projective system~\(\C\).

\item Let~\(\Top\) be the category of complete, locally convex, topological vector spaces with the complete projective topological tensor product \(\otimes\defeq\prot\) and \(\Unit=\C\).
\end{itemize}

In each case, the axioms of an additive symmetric monoidal category are routine to check.  Unlike~\(\Frech\), the categories \(\Ab\), \(\Indban\) and~\(\Proban\) are \emph{closed} symmetric monoidal categories, that is, they have an internal \(\Hom\)-functor (see~\cite{Meyer:HLHA}).

The various categories introduced above are related as follows.

First, the precompact bornology functor, which equips a Fr\'echet space with the precompact bornology, is a fully faithful, symmetric monoidal functor
\[
\Prec\colon \Frech\to\Born
\]
from the category of Fr\'echet spaces to the category of bornological vector spaces (see~\cite{Meyer:HLHA}*{Theorems 1.29 and 1.87}).  This means that a linear map between two Fr\'echet spaces is continuous if and only if it maps precompact subsets again to precompact subsets and that the identity map on the algebraic tensor product \(V\otimes W\) of two Fr\'echet spaces \(V\) and~\(W\) extends to an isomorphism \(\Prec(V\prot W) \cong \Prec(V) \hot \Prec(W)\); this amounts to a deep theorem of Alexander Grothendieck~\cite{Grothendieck:Produits} about precompact subsets of \(V\prot W\).

Secondly, there is a fully faithful functor \(\diss\colon \Born\to\Indban\), called \emph{dissection functor}, from the category of bornological vector spaces to the category of inductive systems of Banach spaces.  It writes a (complete, convex) bornological vector space as an inductive limit of an inductive system of Banach spaces in a natural way (see~\cite{Meyer:HLHA}).  It is, unfortunately, not symmetric monoidal on all bornological vector spaces.  The problem is that dissection is not always compatible with completions.  But this pathology rarely occurs.  In particular, it is symmetric monoidal on the subcategory of Fr\'echet spaces by \cite{Meyer:HLHA}*{Theorem 1.166}, that is, the composite functor \(\diss\circ\Prec\colon \Frech\to\Indban\) is a fully faithful and symmetric monoidal functor.  The problems with completions of bornological vector spaces are the reason why local cyclic cohomology requires the category \(\Indban\) instead of \(\Born\) (see~\cite{Meyer:HLHA}).

Explicitly, the functor \(\diss\circ\Prec\colon \Frech\to\Indban\) does the following.  Let~\(V\) be a Fr\'echet space and let~\(I\) be the set of all compact, absolutely convex, circled subsets of~\(V\).  Equivalently, a subset~\(S\) of~\(V\) belongs to~\(I\) if there is a Banach space~\(W\) and an injective, compact linear map \(f\colon W\to V\) that maps the closed unit ball of~\(W\) onto~\(S\).  Given~\(S\), we may take~\(W\) to be the linear span of~\(S\) with the gauge norm of~\(S\).  We denote this Banach subspace of~\(V\) by~\(V_S\).  The set~\(I\) is directed, and \((V_S)_{S\in I}\) is an inductive system of Banach spaces.  The functor \(\diss\circ\Prec\) maps~\(V\) to this inductive system of Banach spaces.

The functor \(\Prec\) extends, of course, to a functor from \(\Top\) to~\(\Born\).  But this functor is neither fully faithful nor symmetric monoidal, and neither is its composition with the dissection functor.

Dually, we may embed \(\Frech\) into~\(\Top\) -- this embedding is fully faithful and symmetric monoidal by definition.  We are going to describe an analogue of the dissection functor that maps \(\Top\) to \(\Proban\) (see~\cite{Prosmans:Derived_analysis}).  Given a locally convex topological vector space~\(V\), let~\(I\) be the set of all continuous semi-norms on~\(I\) and let~\(\hat{V}_p\) for \(p\in I\) be the Banach space completion of~\(V\) with respect to~\(p\).  This defines a projective system \(\diss^*(V)\) of Banach spaces with \(\varprojlim \diss^*(V) = V\) if~\(V\) is complete.  This construction defines a fully faithful functor
\[
\diss^*\colon \Top\to\Proban.
\]
For two complete, locally convex topological vector spaces \(V\) and~\(W\), the semi-norms of the form \(p\prot q\) for continuous semi-norms \(p\) and~\(q\) on \(V\) and~\(W\) generate the projective tensor product topology on \(V\otimes W\).  This yields a natural isomorphism \(\diss^*(V\prot W) \cong \diss^*(V) \otimes \diss^*(W)\), and the functor \(\diss^*\) is symmetric monoidal.

In some situations, it is preferable to use the complete \emph{inductive} topological tensor product on~\(\Top\) (see~\cite{Brodzki-Plymen:Periodic}).  However, this tensor product does not furnish another symmetric monoidal structure on~\(\Top\) because it fails to be associative in general.  It only works on subcategories, such as the category of nuclear LF-spaces, where it is closely related to the projective \emph{bornological} tensor product.

Once we have a symmetric monoidal category, we may define algebras and unital algebras inside this category, and modules over algebras and unitary modules over unital algebras (see~\cite{Saavedra:Tannakiennes}).  Algebras in~\(\Ab\) are rings.  Algebras in \(\Top\) are complete locally convex topological algebras, that is, complete locally convex topological vector spaces~\(A\) with a jointly continuous associative bilinear multiplication \(A\times A\to A\); notice that such algebras need not be locally multiplicatively convex.  Similarly, algebras in \(\Born\) are complete convex bornological algebras, that is, complete convex bornological vector spaces with a (jointly) bounded associative bilinear multiplication.  Unitality has the expected meaning for algebras in \(\Ab\), \(\Top\), and \(\Born\).  If~\(A\) is an algebra in~\(\Ab\), that is, a ring, then left or right \(A\)\nb-modules and \(A\)\nb-bimodules in~\(\Ab\) are left or right \(A\)\nb-modules and \(A\)\nb-bimodules in the usal sense, and unitality for modules over unital algebras has the expected meaning.  The same holds in the categories \(\Top\) and \(\Born\).  A left complete locally convex topological module over a complete locally convex topological algebra~\(A\) is a complete locally convex topological vector space~\(M\) with a jointly continuous left module structure \(A\times M\to M\).

\subsection{Exact category structures}
\label{sec:exact_category}

A pair \((i,p)\) of composable maps \(I\xrightarrow{i}E\xrightarrow{p}Q\) in an additive category is called a \emph{short exact sequence} if~\(i\) is a kernel of~\(p\) and~\(p\) is a cokernel of~\(i\).  We also call the diagram \(I\xrightarrow{i}E\xrightarrow{p}Q\) an \emph{extension} in this case.

\begin{example}
  \label{exa:extension_Frech}
  Extensions in~\(\Ab\) are group extensions or short exact sequences in the usual sense.  The Open Mapping Theorem shows that a diagram of Fr\'echet spaces \(I\to E\to Q\) is an extension in \(\Frech\) if and only if it is exact as a sequence of vector spaces.  This becomes false for more general extensions of bornological or topological vector spaces.
\end{example}

An \emph{exact category} is an additive category~\(\Cat\) with a family~\(\Exten\) of extensions, called \emph{conflations}; we call the maps \(i\) and~\(p\) in a conflation an \emph{inflation} and a \emph{deflation}, respectively, following Keller \cites{Keller:Chain_stable, Keller:Handbook}.  We use the symbols \(\mono\) and~\(\epi\) to denote inflations and deflations, and \(I\mono E\epi Q\) to denote conflations.

The conflations in an exact category must satisfy some axioms (see~\cite{Quillen:Higher_K}), which, thanks to a simplification by Bernhard Keller in the appendix of~\cite{Keller:Chain_stable}, require:
\begin{itemize}
\item the identity map on the zero object is a deflation;

\item products of deflations are again deflations;

\item pull-backs of deflations along arbitrary maps exist and are again deflations; that is, in a pull back diagram
  \[
  \xymatrix{A\ar[r]^{f}\ar[d]&B\ar[d]\\C\ar[r]_g&D,}
  \]
  if~\(g\) is a deflation, so is~\(f\);

\item push-outs of inflations along arbitrary maps exist and are again inflations.
\end{itemize}
These axioms are usually easy to verify in examples.

Any exact category is equivalent to a full subcategory of an Abelian category closed under extensions, such that the conflations correspond to the extensions in the ambient Abelian category.  As a consequence, most results of homological algebra extend easily to exact categories.

We now describe some examples of exact category structures on the symmetric monoidal categories introduced above.

\begin{example}
  \label{exa:split}
  Let~\(\Cat\) be any additive category and let~\(\Split\) be the class of all split extensions; these are isomorphic to direct sum extensions.  This is an exact category structure on~\(\Cat\).
\end{example}

When we do homological algebra with topological or bornological algebras, we implicitly use this trivial exact category structure on the category of topological or bornological vector spaces.  The bar resolutions that we use in this context are all \emph{split} exact (contractible) and their entries are only projective with respect to module extensions that split as extensions of topological or bornological vector spaces.  Of course, our purpose here is to study algebra extensions that are not split, so that we need more interesting classes of conflations.

The following definition is equivalent to one by Jean-Pierre Schneiders~\cite{Schneiders:Quasi-Abelian}.

\begin{definition}
  \label{def:quasi-Abelian}
  An additive category is \emph{quasi-Abelian} if any morphism in it has a kernel and a cokernel and if it becomes an exact category when we take all extensions as conflations.
\end{definition}

In the situation of Definition~\ref{def:quasi-Abelian}, the exact category axioms above simplify slightly (see also \cite{Prosmans:Derived_limits}*{Definition~1.1.3}).  The first two axioms become automatic and can be omitted, and the mere existence of pull-backs and push-outs in the other two axioms is guaranteed by the existence of kernels and cokernels.

It goes without saying that Abelian categories such as~\(\Ab\) are quasi-Abelian.  The category~\(\Top\) is not quasi-Abelian (see~\cite{Prosmans:Derived_analysis}) because quotients of complete topological vector spaces need not be complete.  But the other additive categories introduced above are all quasi-Abelian:

\begin{lemma}
  \label{lem:quasi-Abelian}
  The categories \(\Ab\), \(\Frech\), \(\Born\), \(\Indban\) and \(\Proban\) are quasi-Abelian and hence become exact categories when we let all extensions be conflations.
\end{lemma}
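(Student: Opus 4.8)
The plan is to verify, for each of the five categories in turn, that it admits kernels and cokernels and that the class of all extensions satisfies the (Keller-simplified) exact category axioms; by Definition~\ref{def:quasi-Abelian} this is exactly the assertion that the category is quasi-Abelian, and the last sentence of the lemma is then immediate. For \(\Ab\) there is nothing to do: it is Abelian, hence quasi-Abelian, and all short exact sequences are conflations by Example~\ref{exa:extension_Frech}. So the real work is the four functional-analytic categories \(\Frech\), \(\Born\), \(\Indban\), \(\Proban\).

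First I would recall that in each of these categories every morphism \(f\colon V\to W\) has a kernel and a cokernel: the kernel is the usual algebraic kernel, which is a closed subspace (resp.\ a subobject) and inherits the structure (a closed subspace of a Fr\'echet space is Fr\'echet; sub- and quotient objects of bornological spaces, after completion, stay in \(\Born\); the level-wise constructions work for inductive and projective systems of Banach spaces). The cokernel is \(W\) modulo the closure of the image, again with the induced structure; the only subtlety is completeness, which is why we restrict to complete bornological spaces and why \(\Top\) is excluded. Once kernels and cokernels exist, by the remark following Definition~\ref{def:quasi-Abelian} the first two exact-category axioms are automatic and the pull-backs and push-outs in the remaining two axioms exist automatically; so it remains only to check that pull-backs of deflations are deflations and push-outs of inflations are inflations. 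Here a deflation is precisely an extension map \(E\epi Q\), i.e.\ a morphism that is a cokernel of its kernel -- for Fr\'echet spaces, by the Open Mapping Theorem (Example~\ref{exa:extension_Frech}), this is just a surjection; for the other three categories one uses the corresponding characterisation of ``strict epimorphism''.

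The key step is therefore: given a deflation \(g\colon C\epi D\) and an arbitrary morphism \(B\to D\), show the pull-back \(A=B\times_D C\to B\) is again a deflation, and dually for push-outs of inflations. For Fr\'echet spaces this is a direct calculation: the pull-back is the closed subspace \(\{(b,c): \beta(b)=g(c)\}\subseteq B\oplus C\), it is Fr\'echet, the projection to~\(B\) is surjective because~\(g\) is, and it is strict (a cokernel of its kernel) because its kernel is \(0\oplus\ker g\) and \(B\oplus C/(0\oplus\ker g)\cong B\times_D D = B\) via the projection; the push-out case is dual, using that a push-out of a closed embedding along any map, after completion, is again a closed embedding. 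For \(\Born\), \(\Indban\), \(\Proban\) one argues the same way, replacing ``closed subspace'' and ``surjection'' by the appropriate notions and invoking the quasi-Abelian structure results of Prosmans and Schneiders (\cite{Prosmans:Derived_limits}, \cite{Prosmans:Derived_analysis}, \cite{Schneiders:Quasi-Abelian}); indeed, that \(\Born\), \(\Indban\) and \(\Proban\) are quasi-Abelian is essentially recorded there, and for \(\Frech\) it is classical.

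I expect the main obstacle -- really the only nontrivial point -- to be the stability of inflations and deflations under push-out and pull-back in the bornological and ind/pro-Banach settings, where one must be careful that the naive level-wise or algebraic construction of the pull-back/push-out genuinely computes the categorical one and stays inside the (completed) category; this is exactly the kind of completion issue flagged in the discussion of the dissection functor. For \(\Frech\), by contrast, everything reduces cleanly to the Open Mapping Theorem, so that case is routine. Since the lemma is standard and the details are available in the cited literature, I would keep the proof brief: state that \(\Ab\) is Abelian, note the kernel/cokernel descriptions, reduce via Definition~\ref{def:quasi-Abelian} to the two stability axioms, check these for \(\Frech\) using the Open Mapping Theorem, and cite \cite{Prosmans:Derived_limits}, \cite{Prosmans:Derived_analysis} and \cite{Schneiders:Quasi-Abelian} for \(\Born\), \(\Indban\) and \(\Proban\).
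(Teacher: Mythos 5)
Your proposal is correct and follows essentially the same route as the paper: describe the inflations and deflations (strict mono-/epimorphisms) explicitly, verify the two nontrivial stability axioms for \(\Frech\) via the Open Mapping Theorem (the paper in fact leaves this as an exercise after identifying the strict morphisms), and handle \(\Indban\) and \(\Proban\) by citing the results of Prosmans and Schneiders that pro- and ind-categories over a quasi-Abelian category (here, Banach spaces) are again quasi-Abelian. The only content of the paper's proof not reflected in yours is the explicit levelwise description of inflations, deflations and extensions in \(\Indban\) and \(\Proban\) recorded at the end, which is used later in the paper but is not needed for the lemma itself.
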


\begin{proof}
  For the categories \(\Ab\), \(\Frech\) and~\(\Born\), we merely describe the inflations and deflations or, equivalently, the strict mono- and epimorphisms and leave it as an exercise to verify the axioms.  The inflations and deflations in \(\Ab\) are simply the injective and surjective group homomorphisms.

  Let \(f\colon V\to W\) be a continuous linear map between two Fr\'echet spaces.  It is an inflation if~\(f\) is a homeomorphism from~\(V\) onto \(f(V)\) with the subspace topology; by the Closed Graph Theorem, this holds if and only if~\(f\) is injective and its range is closed.  The map~\(f\) is a deflation if and only if it is an open surjection; by the Closed Graph Theorem, this holds if and only if~\(f\) is surjective.

  Let \(f\colon V\to W\) be a bounded linear map between two bornological vector spaces.  It is an inflation if and only if~\(f\) is a bornological isomorphism onto~\(f(V)\) with the subspace bornology; equivalently, a subset of~\(V\) is bounded if and only if its \(f\)\nb-image is bounded.  It is a deflation if and only if~\(f\) is a bornological quotient map, that is, any bounded subset of~\(W\) is the \(f\)\nb-image of a bounded subset of~\(V\).

  The category of projective systems over a quasi-Abelian category is again quasi-Abelian by \cite{Prosmans:Derived_limits}*{Proposition 7.1.5}.  Since opposite categories of quasi-Abelian categories are again quasi-Abelian, the same holds for categories of inductive systems by duality.  Since the category of Banach spaces is quasi-Abelian (it is a subcategory of the quasi-Abelian category~\(\Frech\) closed under subobjects, quotients, and conflations), we conclude that \(\Indban\) and \(\Proban\) are quasi-Abelian.

  Furthermore, we can describe the inflations and deflations as follows (see \cite{Prosmans:Derived_limits}*{Corollary 7.1.4}).  A morphism \(f\colon X\to Y\) of inductive systems of Banach spaces is an inflation (or deflation) if and only if there are inductive systems \((X'_i)_{i\in I}\) and \((Y'_i)_{i\in I}\) of Banach spaces and inflations (or deflations) \(f'_i\colon X'_i\to Y'_i\) for all \(i\in I\) that form a morphism of inductive systems, and isomorphisms of inductive systems \(X\cong (X'_i)\) and \(Y\cong (Y'_i)\) that intertwine~\(f\) and~\((f'_i)_{i\in I}\).  A dual statement holds for projective systems.  The same argument shows that a diagram \(I\xrightarrow{i} E\xrightarrow{p} Q\) in \(\Indban\) is an extension if and only if it is the colimit of an inductive system of extensions of Banach spaces.
\end{proof}

\begin{definition}
  \label{def:fully_exact}
  A functor \(F\colon \Cat_1\to\Cat_2\) between two exact categories is \emph{fully exact} if a diagram \(I\to E\to Q\) is a conflation in~\(\Cat_1\) if and only if \(F(I)\to F(E)\to F(Q)\) is a conflation in~\(\Cat_2\).
\end{definition}

The functors \(\Prec\colon \Frech\to\Born\), \(\diss\colon \Born\to\Indban\), and \(\diss^*\colon \Frech\to\Proban\) are fully exact.  Thus \(\diss\circ\Prec\colon \Frech\to\Indban\) and \(\diss^*\colon \Frech\to\Proban\) are fully exact, symmetric monoidal, and fully faithful, that is, they preserve all extra structure on our categories.

The following exact category structures are useful in connection with nuclearity:

\begin{definition}
  \label{def:locally_split}
  An extension \(I\xrightarrow{i} E\xrightarrow{p} Q\) in \(\Indban\) is called \emph{locally split} if the induced sequence
  \[
  \Hom(V,I) \to \Hom(V,E) \to \Hom(V,Q)
  \]
  is an extension (of vector spaces) for each \emph{Banach} space~\(V\) (here we view~\(V\) as a constant inductive sytem).

  An extension \(I\xrightarrow{i} E\xrightarrow{p} Q\) in \(\Proban\) is called \emph{locally split} if the induced sequence
  \[
  \Hom(I,V) \to \Hom(E,V) \to \Hom(Q,V)
  \]
  is an extension (of vector spaces) for each \emph{Banach} space~\(V\).
\end{definition}

It is routine to verify that analogous definitions yield exact category structures for inductive and projective systems over any exact category.  By restriction to the full subcategory~\(\Frech\), we also get new exact category structures on Fr\'echet spaces.

\begin{definition}
  \label{def:locally_split_Frech}
  An extension \(I\xrightarrow{i} E\xrightarrow{p} Q\) of Fr\'echet spaces is \emph{ind-locally split} if any compact linear map \(V\to Q\) for a Banach space~\(V\) lifts to a continuous linear map \(V\to E\) (then the lifting can be chosen to be compact as well).  The extension is called \emph{pro-locally split} if any continuous linear map \(I\to V\) for a Banach space~\(V\) extends to a continuous linear map \(E\to V\).
\end{definition}

It is easy to check that an extension of Fr\'echet spaces is ind-locally split if and only if \(\diss\circ\Prec\) maps it to a locally split extension in \(\Indban\), and pro-locally split if and only if \(\diss^*\) maps it to a locally split extension in \(\Proban\).

\subsection{Exact chain complexes, quasi-isomorphisms, and homology}
\label{sec:exact_qi_Ho}

All the basic tools of homological algebra work in exact categories in the same way as in Abelian categories.  This includes the construction of a derived category (see \cites{Neeman:Derived_Exact, Keller:Handbook}).  To keep this article easier to read, we only use a limited set of tools, however.

\begin{definition}
  \label{def:exact_chain}
  A chain complex \((C_n,d_n\colon C_n\to C_{n-1})\) in an exact category \((\Cat,\Exten)\) is called \emph{exact} (or \(\Exten\)\nb-exact) in degree~\(n\) if \(\ker d_n\) exists and~\(d_{n+1}\) induces a deflation \(C_{n+1} \xrightarrow{d_{n+1}} \ker d_n\).  Equivalently, \(\ker d_n\) and \(\ker d_{n+1}\) exist and the sequence \(\ker d_{n+1} \xrightarrow{\subseteq} C_{n+1} \xrightarrow{d_{n+1}} \ker d_n\) is a conflation.
\end{definition}

In all our examples, the existence of kernels and cokernels comes for free.  In~\(\Ab\), exactness just means \(\im d_{n+1} = \ker d_n\).  By the Open Mapping Theorem, the same happens in~\(\Frech\).  But exactness of chain complexes in \(\Born\), \(\Indban\) and \(\Proban\) requires more than this set theoretic condition.

\begin{example}
  \label{exa:split_quasi-iso}
  Let~\(\Cat\) be an additive category in which all idempotent morphisms have a range object (this follows if all morphisms in~\(\Cat\) have kernels).  Then a chain complex in~\(\Cat\) is \(\Split\)\nb-exact if and only if it is contractible.
\end{example}

\begin{example}
  \label{exa:locally_split_quasi-iso}
  Call a chain complex in \(\Indban\) or \(\Proban\) locally split exact if it is exact with respect to the exact category structures defined in Definition~\ref{def:locally_split}.  A chain complex~\(C_\bullet\) in \(\Indban\) is locally split exact if and only if \(\Hom(V,C_\bullet)\) is exact for each Banach space~\(V\).  Dually, a chain complex~\(C_\bullet\) in \(\Proban\) is locally split exact if and only if \(\Hom(C_\bullet,V)\) is exact for each Banach space~\(V\).
\end{example}

Any symmetric monoidal category~\(\Cat\) carries a \emph{canonical forgetful functor} to the category of Abelian groups,
\[
V\mapsto [V]\defeq \Hom(\Unit,V),
\]
where~\(\Unit\) denotes the tensor unit.  Another \emph{dual} forgetful functor is defined by \([V]^*\defeq \Hom(V,\Unit)\).

\begin{example}
  \label{exa:forgetful_functor}
  The forgetful functor acts identically on~\(\Ab\).  On \(\Frech\) and~\(\Born\), it yields the underlying Abelian group of a Fr\'echet space or a bornological vector space.  The forgetful functors on \(\Indban\) and \(\Proban\) map inductive and projective systems of Banach spaces to their inductive and projective limits, respectively.
\end{example}

The forgetful functor and its dual allow us to define the homology and cohomology of a chain complex in~\(\Cat\).  Let \(\Ho_*(C_\bullet)\) and \(\Ho^*(C_\bullet)\) for a chain complex~\(C_\bullet\) be the homology of the chain complex \([C_\bullet]\) and the cohomology of the cochain complex \([C_\bullet]^*\), respectively.  This yields the usual definition of homology and \emph{continuous} cohomology for chain complexes of Fr\'echet spaces.

While a chain complex of Fr\'echet spaces is exact if and only if its homology vanishes, this fails for chain complexes of bornological vector spaces or for chain complexes in \(\Indban\) and \(\Proban\); there are even exact chain complexes in~\(\Proban\) with non-zero homology.  For Fr\'echet spaces, exactness becomes stronger than vanishing of homology if we use other exact category structures like those in Definition~\ref{def:locally_split_Frech}.  This is remedied by the \emph{refined homology} \(\Hor_*(C_\bullet)\) for chain complexes in~\(\Cat\).

\begin{definition}
  \label{def:refined_homology}
  Recall that any exact category~\(\Cat\) can be realised as a full, fully exact subcategory of an Abelian category (even in a canonical way).  We let~\(\Abcat\) be such an Abelian category containing~\(\Cat\), and we let \(\Hor_n(C_\bullet)\) for a chain complex in~\(\Cat\) be its \(n\)th homology in the ambient Abelian category~\(\Abcat\).
\end{definition}

This refined homology is useful to extend familiar results and definitions from homological algebra to chain complexes in exact categories.

By design \(\Hor_n(C_\bullet)=0\) if and only if~\(C_\bullet\) is exact in degree~\(n\).

We now compare the refined homology with the usual homology for chain complexes of Abelian groups and Fr\'echet spaces.  For chain complexes of Abelian groups, both agree because~\(\Ab\) is already Abelian, so that \(\Hor_n(C_\bullet)\) is the usual homology of a chain complex of Abelian groups.

For a chain complex of Fr\'echet spaces, let \(\Ho_n^\Frech(C_\bullet)\) be its \emph{reduced homology}: the quotient of \(\ker d_n\) by the closure of \(d_{n+1}(C_{n+1})\) with the quotient topology.  This is the Fr\'echet space that comes closest to the homology \(\Hor_n(C_\bullet)\).  Assume that the boundary map~\(d_\bullet\) has closed range.  Then it is automatically open as a map to \(\im d_{n+1}\) with the subspace topology from~\(C_n\); the map \(\ker d_n\to \Ho_n^\Frech(C_\bullet)\) is open, anyway.  Thus \(\ker (d_{n+1}) \mono C_{n+1} \epi \im d_{n+1}\) and \(\im d_{n+1} \mono \ker d_n \epi \Ho_n^\Frech(C_\bullet)\) are conflations of Fr\'echet spaces.  Since the embedding of \(\Frech\) into the ambient Abelian category~\(\Abcat\) is exact, these remain extensions in~\(\Abcat\).  Hence \(\Hor_n(C_\bullet) \cong \Ho_n^\Frech(C_\bullet)\) if~\(d_\bullet\) has closed range.  In general, \(\Hor_n(C_\bullet)\) is the cokernel in the Abelian category~\(\Abcat\) of the map \(C_{n+1} \to \ker d_n\) induced by~\(d_{n+1}\).

\begin{definition}
  \label{def:quasi-isomorphism}
  A \emph{quasi-isomorphism} between two chain complexes in an exact category is a chain map with an exact mapping cone.
\end{definition}

In an Abelian category such as~\(\Ab\), quasi-isomorphisms are chain maps that induce an isomorphism on homology.  As a consequence, a chain map is a quasi-isomorphism if and only if it induces an isomorphism on the \emph{refined} homology.

\begin{lemma}
  \label{lem:quasi-iso_Frechet}
  A chain map between two chain complexes of Fr\'echet spaces is a quasi-isomorphism with respect to the class of all extensions if and only if it induces an isomorphism on homology.
\end{lemma}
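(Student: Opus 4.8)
The plan is to reduce the statement to the corresponding fact about mapping cones, using Definition~\ref{def:quasi-isomorphism} and the description of refined homology for Fréchet complexes. A chain map $f\colon C_\bullet\to D_\bullet$ of Fréchet spaces is a quasi-isomorphism (for the class of all extensions) precisely when its mapping cone $\cone(f)_\bullet$ is an exact chain complex in~$\Frech$. By the remark following Definition~\ref{def:exact_chain}, exactness of a chain complex of Fréchet spaces in degree~$n$ means that $\im d_{n+1}=\ker d_n$ as sets \emph{and} that the induced map $C_{n+1}\to\ker d_n$ is a deflation, i.e.\ open onto its range. First I would observe that, by the Open Mapping Theorem (as already invoked in Example~\ref{exa:extension_Frech} and after Definition~\ref{def:exact_chain}), once $\im d_{n+1}=\ker d_n$ holds the openness is automatic: the kernel $\ker d_n$ is a closed subspace, hence a Fréchet space, and a continuous linear surjection $C_{n+1}\epi\ker d_n$ of Fréchet spaces is automatically open. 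Therefore, for Fréchet complexes, $\Exten$-exactness in degree~$n$ is \emph{equivalent} to the purely set-theoretic condition $\im d_{n+1}=\ker d_n$, which is exactly vanishing of the algebraic homology $\Ho_n$ (equivalently of $\Hor_n$, since $\Frech$ embeds exactly in the ambient Abelian category and the two notions coincide when the boundary has closed range).

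Next I would apply this to $\cone(f)_\bullet$. On the one hand, $f$ is a quasi-isomorphism iff $\cone(f)_\bullet$ is exact in every degree, which by the previous paragraph holds iff $\Ho_n(\cone(f)_\bullet)=0$ for all~$n$, i.e.\ iff $\cone(f)_\bullet$ is acyclic in the ordinary algebraic sense after applying the forgetful functor $[\blank]$. On the other hand, the forgetful functor $[\blank]\colon\Frech\to\Ab$ is additive, hence commutes with the formation of mapping cones, so $[\cone(f)_\bullet]=\cone([f])_\bullet$ as complexes of Abelian groups. By the standard long exact sequence of a mapping cone in an Abelian category, $\cone([f])_\bullet$ is acyclic iff $[f]$ induces an isomorphism on homology, that is, iff $f$ induces an isomorphism on $\Ho_*(C_\bullet)\defeq \Ho_*([C_\bullet])$. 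Chaining these equivalences gives the claim.

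The only genuine point requiring care — and the step I expect to be the main (if modest) obstacle — is the first one: verifying that for a chain complex of Fréchet spaces, $\Exten$-exactness in degree~$n$ really does collapse to $\im d_{n+1}=\ker d_n$, with no hidden topological obstruction. This rests on two applications of the Open Mapping / Closed Graph Theorem: that $\ker d_n$ is closed (so it is itself a Fréchet space in the subspace topology, hence the target is an object of $\Frech$ and not merely a non-metrisable or incomplete space), and that a continuous linear bijection-onto-its-range with \emph{dense} range that is in fact surjective is open. Since $\ker d_n$ is automatically closed as the preimage of~$0$ under the continuous map $d_n$, both hypotheses are met, and the reduction goes through. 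Everything else is formal manipulation with mapping cones and the additive forgetful functor, so once this reduction is in place the proof is immediate.
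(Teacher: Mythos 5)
Your proposal is correct and follows essentially the same route as the paper: both reduce to exactness of the mapping cone, use the Open Mapping Theorem to show that for Fr\'echet complexes $\Exten$-exactness collapses to the set-theoretic condition $\im d_{n+1}=\ker d_n$ (i.e.\ vanishing of homology), and then invoke the Puppe long exact sequence of the cone. Your write-up merely spells out the Open Mapping Theorem step in more detail than the paper does.
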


\begin{proof}
  The mapping cone of a chain map~\(f\) is again a chain complex of Fr\'echet spaces and hence is exact if and only if its homology vanishes.  By the Puppe long exact sequence, the latter homology vanishes if and only if~\(f\) induces an isomorphism on homology.
\end{proof}

Quasi-isomorphisms in \(\Born\), \(\Indban\), or \(\Proban\) are more complicated to describe.

To prove the excision theorem, we must show that certain chain maps are quasi-isomorphisms.  The arguments in~\cite{Guccione-Guccione:Excision} use homology to detect quasi-isomorphisms and, with our refined notion of homology, carry over literally to any exact symmetric monoidal category.  But, in fact, we do not need this sophisticated notion of homology because we only need quasi-isomorphisms of the following simple type:

\begin{lemma}
  \label{lem:quasi-iso_criterion}
  Let \(I_\bullet\overset{i}\mono E_\bullet \overset{p}\epi Q_\bullet\) be a conflation of chain complexes in~\(\Cat\), that is, the maps~\(i\) and~\(p\) are chain maps and restrict to conflations \(I_n\mono E_n\epi Q_n\) for all \(n\in\Z\).  If~\(I_\bullet\) is exact, then~\(p\) is a quasi-isomorphism.  If~\(Q_\bullet\) is exact, then~\(i\) is a quasi-isomorphism.
\end{lemma}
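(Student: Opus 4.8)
The plan is to unwind the definition of quasi-isomorphism and to exhibit the two mapping cones in question as middle terms of conflations of chain complexes whose other two terms are manifestly exact.

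By Definition~\ref{def:quasi-isomorphism} it suffices to show that $\cone(p)_\bullet$ is exact when $I_\bullet$ is exact and that $\cone(i)_\bullet$ is exact when $Q_\bullet$ is exact. I would obtain both cones from one mechanism: the mapping-cone construction is an exact functor from the category of morphisms of chain complexes in~$\Cat$, with its degreewise exact structure, to chain complexes in~$\Cat$, because $\cone(f)_n = X_{n-1}\oplus Y_n$ is assembled from the conflations on sources and targets by a direct sum. Applying it to the conflation of chain maps
\[
[I_\bullet\to 0] \mono [E_\bullet \xrightarrow{p} Q_\bullet] \epi [Q_\bullet \xrightarrow{\Id} Q_\bullet]
\]
gives a conflation of chain complexes $I_\bullet[1] \mono \cone(p)_\bullet \epi \cone(\Id_{Q_\bullet})_\bullet$, and applying it to
\[
[I_\bullet \xrightarrow{\Id} I_\bullet] \mono [I_\bullet \xrightarrow{i} E_\bullet] \epi [0\to Q_\bullet]
\]
gives a conflation $\cone(\Id_{I_\bullet})_\bullet \mono \cone(i)_\bullet \epi Q_\bullet$. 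Verifying that these diagrams are indeed conflations of morphisms of chain complexes — the source rows are the given conflation $I\mono E\epi Q$ and trivial (split) conflations, the target rows likewise, and the squares commute because $p\circ i=0$ — is the bookkeeping part of the argument.

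Next I would note that $\cone(\Id_{X_\bullet})_\bullet$ is contractible for every chain complex~$X_\bullet$, hence exact (Example~\ref{exa:split_quasi-iso}), and that $I_\bullet[1]$ is exact precisely when $I_\bullet$ is. The remaining ingredient is a two-out-of-three statement: if $A_\bullet\mono B_\bullet\epi C_\bullet$ is a conflation of chain complexes in~$\Cat$ with $A_\bullet$ and $C_\bullet$ exact, then $B_\bullet$ is exact. I would prove this by realising~$\Cat$ as a full, fully exact subcategory of an Abelian category~$\Abcat$ as in Definition~\ref{def:refined_homology}: the conflation becomes an ordinary short exact sequence of chain complexes in~$\Abcat$, whose long exact homology sequence together with $\Hor_n(A_\bullet)=\Hor_n(C_\bullet)=0$ forces $\Hor_n(B_\bullet)=0$ for all~$n$, that is, $B_\bullet$ is exact. (Equivalently, a conflation of chain complexes induces a distinguished triangle in the derived category of~$\Cat$, in which the exact complexes are precisely the zero objects.)

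Putting this together: the first conflation has outer terms $I_\bullet[1]$ (exact iff $I_\bullet$ is) and the always-exact $\cone(\Id_{Q_\bullet})_\bullet$, so $\cone(p)_\bullet$ is exact whenever $I_\bullet$ is and $p$ is then a quasi-isomorphism; symmetrically, the second conflation has the always-exact $\cone(\Id_{I_\bullet})_\bullet$ and $Q_\bullet$ as outer terms, so $i$ is a quasi-isomorphism whenever $Q_\bullet$ is exact. The conceptual core is the two-out-of-three step, and the point to watch is that it genuinely uses the ambient Abelian category (equivalently, the derived category): the conflations of chain complexes that occur are not degreewise split, so the required exactness cannot be obtained by writing down explicit contractions inside~$\Cat$. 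Everything else — the exactness of the cone functor and the identification of the two conflations — is routine.
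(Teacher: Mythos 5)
Your argument is correct. The paper's own proof is shorter: it applies the long exact sequence for refined homology directly to the given conflation $I_\bullet\mono E_\bullet\epi Q_\bullet$ (viewed in the ambient Abelian category $\Abcat$) and combines it with the observation, made just before the lemma, that a chain map is a quasi-isomorphism if and only if it induces an isomorphism on refined homology; from the long exact sequence, $i_*$ is an isomorphism iff $\Hor_*(Q_\bullet)=0$, i.e.\ iff $Q_\bullet$ is exact, and dually for $p$. You instead work with the mapping cones themselves, exhibiting each as the middle term of an auxiliary conflation ($I_\bullet[1]\mono\cone(p)\epi\cone(\Id_{Q_\bullet})$ and $\cone(\Id_{I_\bullet})\mono\cone(i)\epi Q_\bullet$, both of which are correctly identified) and then invoking a two-out-of-three lemma for exactness in conflations of chain complexes. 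The two routes rest on the same engine --- the long exact homology sequence in $\Abcat$, which is exactly where your two-out-of-three step lives --- so neither is more elementary; what your version buys is that it never needs the ``quasi-isomorphism iff isomorphism on refined homology'' characterisation, only the definition of quasi-isomorphism via exact cones, at the cost of the extra bookkeeping with conflations of morphisms. Your closing caveat is well placed: the middle term of a conflation with exact outer terms cannot in general be shown exact by explicit contractions inside $\Cat$, and the passage to $\Abcat$ (or to the derived category) is genuinely needed there, just as it is in the paper's version.
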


\begin{proof}
  Our conflation of chain complexes yields a long exact homology sequence for refined homology because this works in Abelian categories.  By exactness, the map~\(i\) induces an isomorphism on refined homology if and only if the refined homology of~\(Q_\bullet\) vanishes.  That is, the map~\(i\) is a quasi-isomorphism if and only if~\(Q_\bullet\) is exact.  A similar argument shows that~\(p\) is a quasi-isomorphism if and only if~\(I_\bullet\) is exact.
\end{proof}

Besides Lemma~\ref{lem:quasi-iso_criterion}, we also need to know that a composite of two quasi-iso\-morphisms is again a quasi-isomorphism -- this follows because refined homology is a functor.  This together with Lemma~\ref{lem:quasi-iso_criterion} suffices to verify the quasi-isomorphisms we need.

Finally, we need a sufficient condition for long exact homology sequences.  Let
\begin{equation}
  \label{eq:cofibre_sequence}
  A \xrightarrow{f} B \xrightarrow{g} C  
\end{equation}
be chain maps between chain complexes in~\(\Cat\) with \(g\circ f=0\).  Then we get an induced chain map from the mapping cone of~\(f\) to~\(C\).  We call~\eqref{eq:cofibre_sequence} a \emph{cofibre sequence} if this map \(\cone(f)\to C\) is a quasi-isomorphism.  The \emph{Puppe exact sequence} provides a long exact sequence relating the refined homologies of \(A\), \(B\) and \(\cone(f)\).  For a cofibre sequence, we may identify the refined homology of~\(C\) with that of~\(\cone(f)\) and thus get a natural long exact sequence
\[
\dotsb \to \Hor_n(A) \xrightarrow{f_*} \Hor_n(B) \xrightarrow{g_*} \Hor_n(C) \to \Hor_{n-1}(A) \xrightarrow{f_*} \Hor_{n-1}(B) \to \dotsb.
\]

We get a corresponding long exact sequence for the unrefined homology provided~\(\Unit\) is a projective object, that is, the canonical forgetful functor is exact.  This is the case for \(\Ab\), \(\Frech\), \(\Born\), and \(\Indban\), but \emph{not} for \(\Proban\) because projective limits are not exact.  Similarly, we get a long cohomology exact sequence if~\(\Unit\) is injective.  This is the case for \(\Ab\), \(\Frech\), and \(\Proban\), but \emph{not} for \(\Born\) and \(\Indban\).  There is no long exact cohomology sequence for arbitrary cofibre sequences in \(\Born\) because the Hahn--Banach Theorem fails for bornological vector spaces.  The dual forgetful functor on \(\Indban\) is not exact because it involves projective limits.

\subsection{Hochschild homology and cohomology}
\label{sec:HH}

Let~\(\Cat\) be a symmetric monoidal category.  Let~\(A\) be an algebra in~\(\Cat\), possibly without unit.  We first define the Hochschild homology and cohomology of~\(A\) with coefficients in an \(A\)\nb-bimodule~\(M\).  Then we define the Hochschild homology and cohomology of~\(A\) without coefficients.

The \emph{Hochschild homology} \(\HH_*(A,M)\) of~\(A\) with coefficients~\(M\) is the homology of the chain complex
\[
\HHchain_*(A,M) =
(M\otimes A^{\otimes n},b) =
\bigl(
\dotsb \to M\otimes A^{\otimes 2}
\xrightarrow{b} M\otimes A
\xrightarrow{b} M \to
0 \to \dotsb
\bigr)
\]
in~\(\Cat\), where~\(b\) is defined by categorifying the usual formula
\begin{multline*}
  b(x_0\otimes x_1\otimes \dotsb\otimes x_n) \\\defeq
  \sum_{j=0}^{n-1} (-1)^j x_0\otimes \dotsb \otimes x_jx_{j+1}
  \otimes \dotsb \otimes x_n
  + (-1)^n x_nx_0\otimes x_1\otimes \dotsb \otimes x_{n-1}
\end{multline*}
for \(x_0\in M\), \(x_1,\dotsc,x_n\in A\).  The formula in the \(j\)th summand corresponds to the map \(\Id_{M\otimes A^{\otimes j-2}}\otimes m\otimes \Id_{A^{\otimes n-j}}\), where \(m\colon A\otimes A\to A\) is the multiplication map; the zeroth summand is the multiplication map \(m_{MA}\colon M\otimes A\to M\) tensored with \(\Id_{A^{\otimes n-1}}\), and the last summand involves the multiplication map \(m_{AM}\colon A\otimes M\to M\) and the cyclic rotation of tensor factors \(M\otimes A^{\otimes n} \to A\otimes M \otimes A^{\otimes n-1}\), which exists in symmetric monoidal categories.
 
The \emph{Hochschild cohomology} \(\HH^*(A,M)\) of~\(A\) with coefficients in~\(M\) is the cohomology of the cochain complex
\begin{multline*}
  \HHchain^*(A,M) \defeq
  (\Hom(A^{\otimes n},M),b^*) \\=
  \bigl(
  \dotsb \to 0 \to \Hom(A^{\otimes 0}, M)
  \xrightarrow{b^*} \Hom(A^{\otimes 1}, M)
  \xrightarrow{b^*} \Hom(A^{\otimes 2}, M)
  \to \dotsb
  \bigr),
\end{multline*}
where we interpret \(A^{\otimes 0}\defeq\Unit\) and define~\(b^*\) by categorifying the usual formula
\begin{multline*}
  b^*f(a_1,\dotsc,a_n) \defeq a_1f(a_2,\dotsc,a_n) +
  \sum_{j=1}^{n-1} (-1)^j f(a_1,\dotsc,a_ja_{j+1},\dotsc,a_n)
  \\+ (-1)^n f(a_1,\dotsc,a_{n-1})a_n.
\end{multline*}

Hochschild homology and cohomology are just Abelian groups.  We may also consider the refined homology of \(\HHchain_*(A,M)\).  Our excision results initially deal with this refined Hochschild homology.  This carries over to the unrefined theories if~\(\Unit\) is projective.  If the symmetric monoidal category~\(\Cat\) is closed, we may replace~\(\Hom\) by the internal \(\Hom\)-functor to enrich \(\HHchain^*(A,M)\) to a cochain complex in~\(\Cat\).  This provides a refined version of Hochschild cohomology.

If~\(M\) is a right \(A\)\nb-module, then we may turn it into an \(A\)\nb-bimodule by declaring the left multiplication map \(A\otimes M\to M\) to be the zero map.  This has the effect that the last summand in the map~\(b\) vanishes, so that \(b\) reduces to the map usually denoted by~\(b'\).  Hence assertions about \((M\otimes A^{\otimes n},b)\) for \emph{bi}modules~\(M\) contain assertions about \((M\otimes A^{\otimes n},b')\) for \emph{right} modules~\(M\) as special cases.  Similarly, we may enrich a left \(A\)\nb-module~\(M\) to a bimodule using the zero map \(M\otimes A\to M\), and our assertions specialise to assertions about \((A^{\otimes n}\otimes M,b')\) for left \(A\)\nb-modules~\(M\).

Now we define the Hochschild homology and cohomology of~\(A\) without coefficients.  Let~\(\Unit\) be the tensor unit of~\(\Cat\).  If the algebra~\(A\) is unital, we simply let \(\HH_*(A)\) and \(\HH^*(A)\) be the homology and cohomology of the chain complex \(\HHchain_*(A,A)\).  Thus \(\HH_*(A) \cong \HH_*(A,A)\).  If~\(\Cat\) is closed, then we may form a dual object \(A^*\defeq \Hom(A,\Unit)\) inside~\(\Cat\), and \(\HH^*(A) \cong \HH^*(A,A^*)\).

For a non-unital algebra, the definition involves the unital algebra generated by~\(A\), which is \(A^+\defeq A\oplus\Unit\) with the multiplication where the coordinate embedding \(\Unit \to A^+\) is a unit.  We let \(\HHchain_*(A)\) be the kernel of the augmentation map \(\HHchain_*(A,A^+)\to\Unit\) induced by the coordinate projection \(A^+\to\Unit\).  That is, \(\HHchain_0(A) = A\) and \(\HHchain_n(A) = A^+ \otimes A^{\otimes n}\) for \(n\ge 1\), with the boundary map~\(b\); this is the chain complex of non-commutative differential forms over~\(A\) with the usual Hochschild boundary on non-commutative differential forms.  We let \(\HH_*(A)\) and \(\HH^*(A)\) be the homology and cohomology of \(\HHchain_*(A)\).  It is well-known that \(\HHchain_*(A)\) and \(\HHchain_*(A,A)\) are quasi-isomorphic for unital~\(A\) --~this is a special case of Corollary~\ref{cor:HH_II_EI} below.

Besides \(\HH_*(A)\) and \(\HH^*(A)\), we are also interested in the Hochschild cohomology \(\HH^*(A,A)\), which plays an important role in deformation quantisation and which, in low dimensions, specialises to the centre and the space of outer derivations.

Cyclic homology and periodic cyclic homology can be defined for algebras in~\(\Cat\) as well by carrying over the usual recipes (see also~\cite{Cortinas-Valqui:Excision}).  Since it is well-known anyway that excision in Hochschild (co)homology implies excision in cyclic and periodic cyclic (co)homology, we do not repeat these definitions here.

\subsection{Pure conflations}
\label{sec:pure}

\begin{definition}
  \label{def:pure_extension}
  A conflation \(I\mono E\epi Q\) in~\(\Cat\) is called \emph{pure} if
  \[
  I\otimes V\to E\otimes V\to Q\otimes V
  \]
  is a conflation for all objects~\(V\) of~\(\Cat\).  A chain complex~\(C_\bullet\) is called \emph{pure exact} if \(C_\bullet\otimes V\) is exact for all objects~\(V\) of~\(\Cat\).  A chain map~\(f\) is called a \emph{pure quasi-isomorphism} if \(f\otimes \Id_V\) is a quasi-isomorphism for all objects~\(V\) of~\(\Cat\).
\end{definition}

\begin{definition}
  \label{def:right_exact}
  A functor \(F\colon \Cat_1\to\Cat_2\) between exact categories is called \emph{right exact} if it preserves deflations.  Equivalently, if \(I\overset{i}\mono E\overset{p}\epi Q\) is a conflation in~\(\Cat_1\), then \(\ker\bigl(F(p)\bigr) \to F(E) \xrightarrow{F(p)} F(Q)\) is a conflation in~\(\Cat_2\).
\end{definition}

For instance, a functor \(\Frech\to\Frech\) is right exact if and only if it maps open surjections again to open surjections.

The tensor product functors in the quasi-Abelian categories \(\Ab\), \(\Frech\), \(\Born\), \(\Indban\), and \(\Proban\) are right exact in this sense in each variable.  If the tensor product is right exact, then an extension \(I_\bullet\overset{i}\mono E_\bullet \overset{p}\epi Q_\bullet\) is pure if and only if the natural map \(I\otimes V\to \ker (p\otimes V)\) is an isomorphism for all~\(V\).  In the category~\(\Ab\), this map is always surjective, so that only its injectivity is an issue.  In the categories \(\Frech\) and~\(\Born\), this map is usually injective -- counterexamples are related to counterexamples to Grothendieck's Approximation Property -- and its range is always dense, but it is usually not surjective.

It is clear that split extensions are pure.

We are going to show that any extension of Fr\'echet spaces with nuclear quotient is pure.  This is already known, but we take this opportunity to give two new proofs that use locally split extensions in \(\Indban\) and \(\Proban\), respectively.

\begin{definition}
  \label{def:nuclear}
  An inductive system of Banach spaces \((V_i,\varphi_{ij}\colon V_i\to V_j)_{i\in I}\) is called \emph{nuclear} if for each \(i\in I\) there is \(j\in I_{\ge i}\) for which the map \(\varphi_{ij}\colon V_i\to V_j\) is nuclear, that is, belongs to the projective topological tensor product \(V_i^*\prot V_j\).

  A projective system of Banach spaces \((V_i,\varphi_{ji}\colon V_j\to V_i)_{i\in I}\) is called \emph{nuclear} if for each \(i\in I\) there is \(j\in I_{\ge i}\) for which the map \(\varphi_{ji}\colon V_j\to V_i\) is nuclear.

  A map \(X\to Y\) between two inductive or projective systems of Banach spaces is called \emph{nuclear} if it factors as \(X\to V\to W\to Y\) for a nuclear map between Banach spaces \(V\to W\).
\end{definition}

By definition, an inductive system~\(X\) of Banach spaces is nuclear if and only if each map from a Banach space to~\(X\) is nuclear, and a projective system~\(X\) of Banach spaces is nuclear if and only if each map from~\(X\) to a Banach space is nuclear.

Almost by definition, a bornological vector space~\(V\) is nuclear if and only if \(\diss(V)\) is nuclear in \(\Indban\), and a locally convex topological vector space~\(V\) is nuclear if and only if \(\diss^*(V)\) is nuclear in \(\Proban\) (see~\cite{Hogbe-Nlend-Moscatelli:Nuclear}).  Furthermore, a Fr\'echet space~\(V\) is nuclear if and only if \(\Prec(V)\) is nuclear, if and only if \(\diss \Prec(V)\) is nuclear, see \cite{Hogbe-Nlend-Moscatelli:Nuclear}*{Theorem~(7) on page~160}.

\begin{proposition}
  \label{pro:nuclear_quotient_locally_split}
  Extensions in \(\Indban\) or \(\Proban\) with nuclear quotient are locally split.
\end{proposition}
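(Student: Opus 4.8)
The plan is to reduce the statement to the corresponding fact about a single nuclear map of Banach spaces and then exploit the characterisation of inflations and deflations in $\Indban$ (and dually $\Proban$) from the proof of Lemma~\ref{lem:quasi-Abelian}, together with the characterisation of locally split extensions in Definition~\ref{def:locally_split}. I would treat the $\Indban$ case in detail and obtain the $\Proban$ case by passing to opposite categories, since nuclearity, local splitness, and ``extension'' are all self-dual notions here.

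First I would fix a conflation $I\overset{i}\mono E\overset{p}\epi Q$ in $\Indban$ with $Q$ nuclear, and a Banach space $V$ viewed as a constant inductive system. The task is to show that every morphism $f\colon V\to Q$ lifts through $p$ to a morphism $V\to E$; then exactness of $\Hom(V,I)\to\Hom(V,E)\to\Hom(V,Q)$ follows, the only nontrivial point being surjectivity onto $\ker$, because the left-hand map is automatically injective (as $i$ is an inflation, hence a monomorphism) and the composite is zero. Since $Q$ is nuclear, the morphism $f\colon V\to Q$ is nuclear, so it factors as $V\xrightarrow{a} W\xrightarrow{b} W'\xrightarrow{c} Q$ with $b\colon W\to W'$ a nuclear map of Banach spaces; it therefore suffices to lift the single Banach-space map $b$ (composed with $c$) through $p$, i.e.\ to lift a nuclear map $W\to Q$.

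The heart of the matter is then: a nuclear map $g\colon W\to Q$ of a Banach space into the target of a deflation $p\colon E\epi Q$ in $\Indban$ lifts to $W\to E$. Here I would use that $g$ being nuclear means $g=\sum_{n} \lambda_n \langle \xi_n,\blank\rangle\, q_n$ with $\sum|\lambda_n|<\infty$, $(\xi_n)$ bounded in $W^*$, and $(q_n)$ bounded in $Q$ --- but more usefully, $g$ factors through a nuclear map $W\to \ell^1\to Q$, hence it is enough to handle $g$ defined on $\ell^1$, or even just to lift each $q_n\in Q$ individually. Since $p\colon E\epi Q$ is a deflation of inductive systems, by the description in Lemma~\ref{lem:quasi-Abelian} it is (up to isomorphism of inductive systems) the colimit of a system of Banach-space deflations $p_i\colon E_i\epi Q_i$; a bounded sequence $(q_n)$ in $Q=\varinjlim Q_i$ lands, because the index set is directed and the sequence is (pre)compact after scaling by a null sequence, in a single $Q_i$, where $p_i$ is a surjection of Banach spaces and hence admits a bounded linear lift of the absolutely summable combination $\sum\lambda_n q_n$ --- this is where the classical lifting lemma for nuclear maps into quotients of Banach spaces (Grothendieck) is invoked. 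Assembling the lift of $\sum\lambda_n\langle\xi_n,\blank\rangle q_n$ gives the desired $W\to E_i\to E$, and composing with $a$ lifts the original $f$.

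The main obstacle is the bookkeeping in the previous step: making precise that a nuclear morphism out of a Banach space into the colimit inductive system $Q$ actually factors, with its nuclear structure intact, through a single Banach space $Q_i$ sitting in the system $(Q_i)$ for which $p_i$ is a genuine Banach-space deflation, and then that the classical Banach-space lift stays bounded as a map of inductive systems. Once this is arranged, the $\Proban$ case is immediate by duality: an extension in $\Proban$ with nuclear quotient $Q$ is, under $\blank^{\op}$, an extension in $\Indban$ with nuclear kernel, and the statement ``$\Hom(I,V)\to\Hom(E,V)\to\Hom(Q,V)$ is exact for every Banach space $V$'' becomes the already-proved lifting statement; alternatively one repeats the argument directly, factoring a nuclear map $I\to V$ through a nuclear Banach-space map and extending it along the inflation $I\mono E$ using the Hahn--Banach-type extension property of Banach spaces at each level of the projective system.
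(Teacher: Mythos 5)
Your treatment of the \(\Indban\) case is essentially correct and essentially the paper's argument: both reduce to the fact that a morphism from a Banach space into \(Q\) is nuclear, factors through a single Banach space \(Q_\alpha\) of the system, and then lifts through the Banach-space deflation \(E_\beta\epi Q_\beta\) by Grothendieck's lifting lemma for nuclear maps. (The detour through the representation \(\sum\lambda_n\langle\xi_n,\blank\rangle q_n\) and the worry about where a bounded sequence in \(\varinjlim Q_i\) lives is unnecessary: since \(\Hom(W',Q)=\varinjlim\Hom(W',Q_\alpha)\), the whole Banach-space-level map factors through one \(Q_\alpha\) at once, and lifting the \(q_n\) ``individually'' would in any case not control the norm of the lift.)

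The \(\Proban\) case, however, has a genuine gap, and both of your proposed routes fail. The duality reduction gives the wrong statement: under passage to opposite categories the conflation \(I\mono E\epi Q\) in \(\Proban\) becomes a conflation in which \(Q\) plays the role of the \emph{subobject}, so what you obtain by dualising the \(\Indban\) result is ``extensions in \(\Proban\) with nuclear \emph{kernel} are locally split,'' not the assertion about nuclear quotient. The alternative direct argument fails for a different reason: a morphism \(I\to V\) need not be nuclear, because only \(Q\), not \(I\), is assumed nuclear; and general Banach spaces \(V\) do not have a Hahn--Banach-type extension property for vector-valued maps, so one cannot simply extend a bounded map \(I_\alpha\to V\) along the inflation \(I_\alpha\mono E_\alpha\) levelwise. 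The paper's argument uses a different mechanism here: write the extension as a projective limit of Banach-space extensions \(I_\alpha\mono E_\alpha\epi Q_\alpha\), use nuclearity of \(Q\) to lift the nuclear structure map \(Q_\beta\to Q_\alpha\) to a map \(\sigma\colon Q_\beta\to E_\alpha\), and observe that the difference of the canonical map \(E_\beta\to E_\alpha\) and \(\sigma\circ p_\beta\) annihilates the quotient and hence lands in \(I_\alpha\), producing a map \(E_\beta\to I_\alpha\) that extends \(I_\beta\to I_\alpha\). Any morphism \(I\to V\) factors through some \(I_\alpha\) and is then extended to \(E\to V\) by precomposing with this local retraction. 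This step, constructing a retraction onto the kernel from a lifting on the quotient, is the missing idea in your proposal.
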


\begin{proof}
  Let \(I\mono E\epi Q\) be an extension in \(\Indban\) with nuclear~\(Q\).  Recall that we may write it as an inductive limit of extensions of Banach spaces \(I_\alpha\mono E_\alpha \epi Q_\alpha\).  Nuclearity of~\(Q\) means that for each~\(\alpha\) there is \(\beta\ge\alpha\) for which the map \(Q_\alpha\to Q_\beta\) is nuclear.  Now we recall that nuclear maps between Banach spaces may be lifted in extensions.  That is, the map \(Q_\alpha\to Q_\beta\) lifts to a bounded linear map \(s_\alpha\colon Q_\alpha \to E_\beta\) for some \(\beta\ge\alpha\).  Now let~\(V\) be any Banach space.  The space of morphisms from~\(V\) to~\(Q\) is \(\varinjlim \Hom(V,Q_\alpha)\), that is, any morphism \(V\to Q\) factors through a map \(f\colon V\to Q_\alpha\) for some~\(\alpha\).  Then \(s_\alpha\circ f\colon V\to E_\beta\) lifts~\(f\) to a morphism \(V\to E\).  Thus our extension in~\(\Indban\) is locally split if~\(Q\) is nuclear.

  Similarly, an extension in \(\Proban\) is the limit of a projective system of extensions of Banach spaces \(I_\alpha\mono E_\alpha \epi Q_\alpha\).  Nuclearity of~\(Q\) means that for each~\(\alpha\) there is \(\beta\ge\alpha\) for which the map \(Q_\beta\to Q_\alpha\) is nuclear.  As above, this allows us to lift it to a map \(Q_\beta\to E_\alpha\).  Subtracting this map from the canonical map \(E_\beta\to E_\alpha\) yields a map \(E_\beta\to I_\alpha\) that extends the canonical map \(I_\beta\to I_\alpha\).  As above, this shows that any map \(I\to V\) for a Banach space~\(V\) extends to a map \(E\to V\), using that it factors through~\(I_\alpha\) for some~\(\alpha\).
\end{proof}

\begin{proposition}
  \label{pro:locally_split_pure}
  Locally split extensions in \(\Indban\) or \(\Proban\) are pure locally split: if \(I\mono E\epi Q\) is locally split, then \(I\otimes V\mono E\otimes V\epi Q\otimes V\) is a locally split extension as well and \emph{a fortiori} an extension.
\end{proposition}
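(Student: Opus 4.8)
The plan is to reduce the statement to the characterization of (co)inflations and deflations in $\Indban$ and $\Proban$ given at the end of the proof of Lemma~\ref{lem:quasi-Abelian}, namely that an extension in $\Indban$ is precisely a colimit of an inductive system of extensions of Banach spaces, and dually for $\Proban$. So first I would treat the $\Indban$ case. Write the locally split extension $I\mono E\epi Q$ as a colimit of an inductive system of extensions of Banach spaces $I_\alpha\mono E_\alpha\epi Q_\alpha$. Local splitness means that for every Banach space $W$ and every morphism $W\to Q$ — equivalently, for every $\alpha$ and every map $W\to Q_\alpha$ — there is a lift to a morphism $W\to E$; applying this to $W=Q_\alpha$ and the identity (or rather the structure map into the colimit) produces, for each $\alpha$, an index $\beta\ge\alpha$ and a bounded linear section $s_\alpha\colon Q_\alpha\to E_\beta$ of the canonical map $E_\beta\to Q_\beta$ composed with $Q_\alpha\to Q_\beta$. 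In other words, after passing to a cofinal subsystem, the extension is a colimit of \emph{split} extensions of Banach spaces $I_\alpha\oplus Q_\alpha\cong E_\alpha$ (cofinality and the fact that a directed colimit of split extensions need not be split, but remains an extension, is exactly what the cited characterization encodes).

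Next I would tensor with $V$. Since $\otimes$ on $\Indban$ is computed componentwise, $I\otimes V$, $E\otimes V$, $Q\otimes V$ are colimits of the inductive systems $(I_\alpha\prot V_k)$, $(E_\alpha\prot V_k)$, $(Q_\alpha\prot V_k)$ over the product index set, where $V=(V_k)_k$; and since each $I_\alpha\mono E_\alpha\epi Q_\alpha$ is split, each $I_\alpha\prot V_k\mono E_\alpha\prot V_k\epi Q_\alpha\prot V_k$ is again a split extension of Banach spaces (the projective tensor product of a split extension of Banach spaces with a Banach space is split, with section $s_\alpha\prot\Id_{V_k}$). Hence $I\otimes V\mono E\otimes V\epi Q\otimes V$ is again a colimit of an inductive system of (split) extensions of Banach spaces, so it is an extension in $\Indban$. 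To see it is moreover locally split, take any Banach space $W$ and a morphism $W\to Q\otimes V$; it factors through some $Q_\alpha\prot V_k$, which admits the bounded linear section $s_\alpha\prot\Id_{V_k}$ into $E_\beta\prot V_k$, giving the required lift $W\to E\otimes V$. The $\Proban$ case is strictly dual: a locally split extension in $\Proban$ is, after passing to a cofinal subsystem, a limit of split extensions of Banach spaces (using that maps $I\to W$ to a Banach space $W$ extend over $E$, applied to the structure maps $I\to I_\alpha$), and tensoring componentwise with $V$ preserves splitness levelwise, so the limit is again an extension in $\Proban$, and the extension property of the dual $\Hom(\blank,W)$ sequence is checked as before. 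Finally, ``a fortiori an extension'' is immediate since a locally split extension is in particular an extension.

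I expect the only real subtlety to be bookkeeping with the index sets: one must pass to a cofinal subsystem on which the local sections $s_\alpha$ are defined and compatible enough to make the tensored system a genuine inductive (resp.\ projective) system of split extensions, and one must invoke the characterization from Lemma~\ref{lem:quasi-Abelian} to conclude that a colimit (resp.\ limit) of such split extensions is an extension even though splitness is not preserved in the colimit (resp.\ limit). Everything else — componentwise computation of $\otimes$, and the fact that $\prot$ with a fixed Banach space preserves split short exact sequences of Banach spaces — is routine.
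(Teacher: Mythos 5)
Your overall strategy is exactly the paper's: rewrite the locally split extension as a filtered (co)limit of split extensions of Banach spaces, tensor levelwise, and observe that split extensions of Banach spaces stay split under \(\prot\) with a Banach space, so the tensored diagram is again a (co)limit of split extensions and hence a locally split extension. The one step that does not work as you state it is the claim that ``after passing to a cofinal subsystem, the extension is a colimit of split extensions \(I_\alpha\oplus Q_\alpha\cong E_\alpha\).'' The lifting you produce is a map \(s_\alpha\colon Q_\alpha\to E_\beta\) with \(\beta\ge\alpha\); it is not a section of any single extension \(I_\alpha\mono E_\alpha\epi Q_\alpha\) in the given system, and no cofinal subsystem of that system need consist of split extensions. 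What the paper does instead is to build a \emph{new} system: for each pair \((\beta,\alpha)\) it pulls back \(I_\beta\mono E_\beta\epi Q_\beta\) along \(Q_\alpha\to Q_\beta\) to obtain an extension \(I_\beta\mono E'_{\beta,\alpha}\epi Q_\alpha\), which the lifting \(s_\alpha\) genuinely splits; these pulled-back split extensions form an inductive system over the directed set of pairs, isomorphic as an ind-object to the original extension. So the ``bookkeeping'' you defer actually requires this pullback construction rather than a restriction of the index set; once that is in place, the rest of your argument (levelwise tensoring, and checking local splitness of the tensored extension by factoring a map from a Banach space through some \(Q_\alpha\prot V_k\)) goes through and agrees with the paper.
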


\begin{proof}
  First we claim that a locally split extension in \(\Indban\) or \(\Proban\) may be written as an inductive or projective limit of extensions that are split, but usually with incompatible sections, so that the limit extension does not split.  We only write this down for inductive systems, the case of projective systems is dual.  An analogous argument works for locally split extensions of projective or inductive systems over any additive category.

  Write a locally split extension as an inductive limit of extensions of Banach spaces \(I_\alpha\mono E_\alpha \epi Q_\alpha\).  For each~\(\alpha\), the canonical map \(Q_\alpha\to Q\) lifts to a map \(Q_\alpha\to E\), which is represented by a map \(s_\alpha\colon Q_\alpha \to E_\beta\) for some \(\beta\ge\alpha\).  For each such pair of indices \((\alpha,\beta)\), we may pull back the extension \(I_\beta\mono E_\beta\epi Q_\beta\) along the map \(Q_\alpha\to Q_\beta\) to an extension \(I_\beta\mono E'_{\beta,\alpha} \epi Q_\alpha\).  The lifting~\(s_\alpha\) induces a section \(Q_\alpha \to E'_{\beta,\alpha}\) for this pulled back extension.  The pairs \((\beta,\alpha)\) above form a directed set and the split extensions \(I_\beta \mono E'_{\beta,\alpha} \epi Q_\alpha\) form an inductive system of extensions indexed by this set; its inductive limit is the given extension \(I\mono E\epi Q\).

  Now we prove the purity assertion.  Write the extension \(I\mono E\epi Q\) as an inductive system of split extensions of Banach spaces \(I_\alpha \mono E_\alpha \epi Q_\alpha\).  Let~\(V\) be another object of~\(\Indban\).  The tensor product in \(\Indban\) commutes with inductive limits, so that \(I\otimes V\cong \varinjlim I_\alpha\otimes V\), and so on.  Since the extensions \(I_\alpha \mono E_\alpha \epi Q_\alpha\) split, so do the extensions \(I_\alpha \otimes V\mono E_\alpha \otimes V\epi Q_\alpha \otimes V\).  This implies that \(I \otimes V \to E\otimes V\to Q\otimes V\) is a locally split extension in \(\Indban\).
\end{proof}

\begin{theorem}
  \label{the:nuclear_Frechet_locally_split_extension}
  Let \(I\mono K\epi Q\) be an extension of Fr\'echet spaces.  If~\(Q\) is nuclear, then \(I\mono E \epi Q\) is both pure ind-locally split and pure pro-locally split.
\end{theorem}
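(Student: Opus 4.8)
The plan is to reduce the statement about Fréchet spaces to the already-established facts about inductive and projective systems of Banach spaces, via the fully exact, symmetric monoidal embeddings $\diss\circ\Prec\colon \Frech\to\Indban$ and $\diss^*\colon \Frech\to\Proban$. First I would recall, from the discussion just before Definition~\ref{def:locally_split_Frech}, that a Fréchet space $Q$ is nuclear if and only if $\diss\Prec(Q)$ is nuclear in $\Indban$, and (by the dual version) if and only if $\diss^*(Q)$ is nuclear in $\Proban$. So, starting from our extension $I\mono K\epi Q$ of Fréchet spaces with $Q$ nuclear, applying $\diss\circ\Prec$ produces an extension in $\Indban$ with nuclear quotient, and applying $\diss^*$ produces an extension in $\Proban$ with nuclear quotient — using that both functors are fully exact, so they send the Fréchet extension to a genuine conflation in the target category.

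Next I would invoke Proposition~\ref{pro:nuclear_quotient_locally_split}: the image extension in $\Indban$ is locally split, and likewise the image extension in $\Proban$ is locally split. Then Proposition~\ref{pro:locally_split_pure} upgrades each of these to a \emph{pure} locally split extension, i.e.\ tensoring with any object of $\Indban$ (respectively $\Proban$) preserves the locally split extension property. It remains to transport this back to $\Frech$. Here I would use that $\diss\circ\Prec$ is symmetric monoidal: for a Fréchet space $V$, the identity on the algebraic tensor product extends to a natural isomorphism $\diss\Prec(I\prot V)\cong \diss\Prec(I)\otimes\diss\Prec(V)$, and similarly for $K\prot V$ and $Q\prot V$. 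So the image under $\diss\circ\Prec$ of the complexified sequence $I\prot V\to K\prot V\to Q\prot V$ is exactly the tensor-by-$\diss\Prec(V)$ of a locally split (hence pure) extension in $\Indban$, which is therefore a (locally split) extension in $\Indban$; since $\diss\circ\Prec$ is fully exact, the original sequence of Fréchet spaces is an extension — in fact ind-locally split, by the characterisation recorded right after Definition~\ref{def:locally_split_Frech}. Running the parallel argument with $\diss^*$ in place of $\diss\circ\Prec$ gives that $I\prot V\to K\prot V\to Q\prot V$ is pro-locally split. Since this holds for every Fréchet space $V$, the extension $I\mono K\epi Q$ is pure ind-locally split and pure pro-locally split.

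The one point that needs genuine care — and which I expect to be the main obstacle — is the compatibility of the symmetric monoidal structure with the extension and tensoring steps: one must check that tensoring a Fréchet extension with a Fréchet space $V$ and then applying $\diss\circ\Prec$ really does coincide, \emph{as a sequence}, with tensoring the $\diss\circ\Prec$-image extension by $\diss\Prec(V)$ in $\Indban$, so that Proposition~\ref{pro:locally_split_pure} applies on the nose. This is exactly where the subtlety about dissection not being symmetric monoidal in general is relevant; it is resolved by the fact, cited in Section~\ref{sec:examples_smc}, that $\diss\circ\Prec$ \emph{is} symmetric monoidal on all of $\Frech$, so the natural isomorphisms $\diss\Prec(A\prot B)\cong\diss\Prec(A)\otimes\diss\Prec(B)$ are available for all the objects in sight. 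Granting that, the rest is a formal chain of implications through the two propositions, and the "pure" quantifier over all $V$ in $\Frech$ is handled uniformly because the isomorphisms are natural in $V$.
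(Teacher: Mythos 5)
Your proposal is correct and follows essentially the same route as the paper: embed via the fully exact, symmetric monoidal functors $\diss\circ\Prec$ and $\diss^*$, apply Proposition~\ref{pro:nuclear_quotient_locally_split} and then Proposition~\ref{pro:locally_split_pure}, and transport back using the characterisation after Definition~\ref{def:locally_split_Frech}. The paper writes out only the pro-locally split case and leaves the ind-case as ``a similar argument,'' but the content and the key subtlety you flag (symmetric monoidality of $\diss\circ\Prec$ on Fr\'echet spaces) are exactly as in the paper.
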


\begin{proof}
  We only write down why \(I\otimes V\to E\otimes V \to Q\otimes V\) is pro-locally split for any Fr\'echet space~\(V\).  A similar argument yields that it is ind-locally split.  If~\(Q\) is a nuclear Fr\'echet space, then \(\diss^*(Q)\) is nuclear in \(\Proban\).  Since the functor \(\diss^*\) is fully exact, it maps \(I\mono E\epi Q\) to an extension in \(\Proban\).  Proposition~\ref{pro:nuclear_quotient_locally_split} asserts that this extension is locally split.  Since~\(\diss^*\) is symmetric monoidal, it maps the diagram \(I\otimes V\to E\otimes V\to Q\otimes V\) to
  \[
  \diss^*(I)\otimes \diss^*(V) \to
  \diss^*(I)\otimes \diss^*(V) \to
  \diss^*(I)\otimes \diss^*(V).
  \]
  This is a locally split extension by Proposition~\ref{pro:locally_split_pure}.  Hence the original diagram was a pro-locally split extension.
\end{proof}

\section{Excision in Hochschild homology}
\label{sec:excision_HH}

We fix a symmetric monoidal category~\(\Cat\) with a tensor product~\(\otimes\) and an exact category structure~\(\Exten\).

\begin{definition}
  \label{def:H-unital_algebra}
  Let~\(A\) be an algebra in~\(\Cat\).  We call~\(A\) \emph{homologically unital}, briefly H\nb-unital, if the chain complex \((A^{\otimes n},b')_{n\ge 1}\) is pure exact.

  Let~\(A\) be an algebra in~\(\Cat\) and let~\(M\) be a right \(A\)\nb-module.  We call~\(M\) \emph{homologically unitary}, briefly H\nb-unitary, if the chain complex \((M\otimes A^{\otimes n},b')_{n\ge 0}\) is pure exact.  A similar definition applies to left \(A\)\nb-modules.
\end{definition}

By definition, the algebra~\(A\) is homologically unital if and only if it is homologically unitary when viewed as a left or right module over itself.

Recall that a chain complex is exact if and only if its refined homology vanishes.  Therefore, \(M\) is homologically unitary if and only if \(\Hor_n\bigl(\HHchain_*(A,M\otimes V)\bigr)=0\) for all~\(V\).  If~\(\Cat\) is an Abelian category or the category of Fr\'echet spaces with all extensions as conflations, then a chain complex is exact if and only if its homology vanishes.  In this case, \(M\) is homologically unitary if and only if \(\HH_*(A,M\otimes V)=0\) for all~\(V\).  In general, H\nb-unitarity is unrelated to the vanishing of \(\HH_*(A,M\otimes V)\).

\begin{remark}
  \label{rem:self-induced}
  Let~\(M\) be a left \(A\)\nb-module.  If the chain complex \((A^{\otimes n} \otimes M,b')_{n\ge0}\) is exact in dimensions zero and one, then the natural map \(A\otimes_A M\to M\) induced by the module structure \(A\otimes M\to A\) is an isomorphism.  If the map \(A\otimes_A A \to A\) is invertible, then~\(A\) is called \emph{self-induced} in~\cite{Meyer:Smooth_rough}; if \(A\otimes_A M\to M\) is an isomorphism, then the \(A\)\nb-module~\(M\) is called \emph{smooth}.  As a result, H\nb-unital algebras are self-induced and H\nb-unitary modules over self-induced algebras are smooth in the sense of~\cite{Meyer:Smooth_rough}.
\end{remark}

\begin{lemma}
  \label{lem:H-unitary_module_extends}
  Let \(I\mono E\epi Q\) be an algebra conflation and let~\(M\) be a homologically unitary \(I\)\nb-module.  Then the \(I\)\nb-module structure on~\(M\) extends uniquely to an \(E\)\nb-module structure on~\(M\).
\end{lemma}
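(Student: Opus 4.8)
The plan is to reduce the statement to the relative tensor product $M\otimes_I I$, which is canonically isomorphic to~$M$ precisely because~$M$ is homologically unitary. I will treat a right $I$-module~$M$; the case of left modules is completely symmetric, with $I\otimes_I M$ playing the role of $M\otimes_I I$. The first observation is that, since $I\mono E\epi Q$ is a conflation of \emph{algebras}, $I$ is the kernel of the algebra homomorphism $E\epi Q$ and hence a two-sided ideal: the multiplication of~$E$ restricts to morphisms $E\otimes I\to I$ and $I\otimes E\to I$. This makes~$I$ an $I$-$E$-bimodule whose underlying right $I$-module structure, obtained by restricting the right $E$-action along $I\mono E$, is just multiplication in~$I$.

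Next I would form $M\otimes_I I$, the coequaliser of the two morphisms $M\otimes I\otimes I\rightrightarrows M\otimes I$ built from the right $I$-action on~$M$ and from the multiplication of~$I$. The right $E$-action on~$I$ only involves the last tensor factor and commutes with the multiplication of~$I$ by associativity, so it descends to a right $E$-module structure on $M\otimes_I I$. Since~$M$ is homologically unitary, the complex $(M\otimes I^{\otimes n},b')_{n\ge0}$ is in particular exact in degrees zero and one, so Remark~\ref{rem:self-induced} gives that the canonical morphism $\bar\mu\colon M\otimes_I I\to M$ is an isomorphism. Transporting the right $E$-action along~$\bar\mu$ yields the desired $E$-module structure on~$M$; it extends the given $I$-action because the right $E$-action on~$I$ restricts along $I\mono E$ to the multiplication of~$I$, so that $\bar\mu$ is $I$-linear for the restricted structure.

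For uniqueness, suppose $\rho'\colon M\otimes E\to M$ is \emph{any} right $E$-module structure restricting to the given $I$-action. Restricting the associativity identity for~$\rho'$ to arguments of the form $M\otimes I\otimes E$, and using that~$I$ is an ideal of~$E$, shows that the module multiplication $\mu\colon M\otimes I\to M$ is right $E$-linear for the right $E$-action on~$I$. Since $\mu$ coequalises the defining pair and hence factors through $M\otimes I\epi M\otimes_I I$ via a map which is by definition~$\bar\mu$, and since the quotient map is $E$-linear and an epimorphism, $\bar\mu$ itself is $E$-linear for~$\rho'$. As $\bar\mu$ is an isomorphism, this forces~$\rho'$ to be the transport of the canonical $E$-structure of $M\otimes_I I$, that is, $\rho'$ agrees with the structure constructed above.

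I do not expect any serious obstacle here: homological unitarity enters only through the isomorphism $M\otimes_I I\cong M$, which is handed to us by Remark~\ref{rem:self-induced}, and the remainder is bookkeeping with the monoidal structure. The one point that needs a little care is checking that $M\otimes_I I$ and the induced $E$-action on it make sense in the abstract category~$\Cat$ -- but this is exactly what is already used implicitly when relative tensor products such as $A\otimes_A M$ appear in Remark~\ref{rem:self-induced} -- and that restriction along the inflation $I\mono E$ is compatible with the bimodule structure on~$I$.
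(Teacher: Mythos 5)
Your construction is correct and yields the right module structure in every concrete category the paper works with, but it takes a genuinely different route from the paper's proof, and the difference is visible in the abstract setting.  You transport the evident \(E\)\nb-action on the balanced tensor product \(M\otimes_I I\) along the isomorphism \(M\otimes_I I\congto M\) supplied by Remark~\ref{rem:self-induced}.  The paper never forms a balanced tensor product: it uses that H\nb-unitarity makes \emph{both} complexes \((M\otimes I^{\otimes n},b')\) and \((M\otimes I^{\otimes n},b')\otimes E\) exact --- this is exactly where the purity built into Definition~\ref{def:H-unital_algebra} enters --- and that the maps \(M\otimes I^{\otimes n}\otimes E\to M\otimes I^{\otimes n}\) induced by \(I\otimes E\to I\) form a chain map in degrees \(\ge1\); exactness of the top row in degrees \(0\) and \(1\) then gives a \emph{unique} completion \(M\otimes E\to M\) of the square, and associativity comes from the same uniqueness applied to the analogous diagram with \(E^{\otimes 2}\).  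Your route instead needs \(\Cat\) to have cokernels and needs \(\blank\otimes E\) to carry the presentation \(M\otimes I\otimes I\to M\otimes I\epi M\otimes_I I\) to a cokernel presentation of \((M\otimes_I I)\otimes E\) (and to preserve the epimorphism \(q\otimes\Id_E\) on which your uniqueness step relies).  These facts hold in \(\Ab\), \(\Frech\), \(\Born\), \(\Indban\) and \(\Proban\), and they are implicitly presupposed whenever \(A\otimes_A M\) is written down, but they are not among the standing hypotheses of Section~\ref{sec:excision_HH}, which only fixes an additive symmetric monoidal category with an exact structure.  So: under the additional (harmless in practice) hypotheses that cokernels exist and \(\otimes\) preserves them, your proof is complete and arguably more conceptual; what the paper's diagram-completion argument buys is that it runs under the bare hypotheses of the section, trading right exactness of \(\otimes\) for the pure exactness that the definition of H\nb-unitarity already hands it.
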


\begin{proof}
  We write down the proof for right modules; similar arguments work for left modules and bimodules.  Since~\(M\) is H\nb-unitary, we get exact chain complexes \((M\otimes I^{\otimes n},b')\) and \((M\otimes I^{\otimes n},b')\otimes E\).  The maps \(M\otimes I^{\otimes n}\otimes E \to M\otimes I^{\otimes n}\) induced by the multiplication map \(I\otimes E\to I\) provide a chain map between these chain complexes above degree~\(0\), that is, we get a commuting diagram
  \[
  \xymatrix{
    \dotsb \ar[r]&
    M\otimes I\otimes I \otimes E \ar[r] \ar[d]&
    M\otimes I \otimes E \ar[r] \ar[d]&
    M \otimes E \ar[r]&
    0\\
    \dotsb \ar[r]&
    M\otimes I\otimes I \ar[r]&
    M\otimes I \ar[r]&
    M \ar[r]&
    0.
  }
  \]
  A right \(E\)\nb-module structure \(M\otimes E\to M\) on~\(M\) extending the given \(I\)\nb-module structure would complete this commuting diagram to a chain map.  Since the rows are exact, there is a unique such completion.  This defines an \(E\)\nb-module structure on~\(M\): associativity follows from the uniqueness of completing another diagram involving maps \(M\otimes I^{\otimes n}\otimes E^{\otimes 2} \to M\otimes I^{\otimes n}\).
\end{proof}

\begin{theorem}
  \label{the:excision_ideal_extension}
  Let \(I\mono E\epi Q\) be a pure algebra conflation, let~\(M\) be an \(E,I\)\nb-bimodule.  Assume that~\(M\) is homologically unitary as a right \(I\)\nb-module and view~\(M\) as an \(E\)\nb-bimodule.  Then the canonical map \(\HHchain_*(I,M) \to \HHchain_*(E,M)\) is a pure quasi-isomorphism.  Thus \(\HH_*(I,M\otimes V) \cong \HH_*(E,M\otimes V)\) for any object~\(V\) of~\(\Cat\) provided~\(\Unit\) is projective.
\end{theorem}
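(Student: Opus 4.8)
I would follow the argument of Guccione and Guccione~\cite{Guccione-Guccione:Excision}, reorganised so that the only homological algebra used is Lemma~\ref{lem:quasi-iso_criterion}, the functoriality of refined homology, and the hypothesis that~\(M\) is H\nb-unitary as a right \(I\)\nb-module. First I would reduce the statement about pure quasi-isomorphisms to one about ordinary quasi-isomorphisms: for \(B\in\{I,E\}\) there is a natural isomorphism \(\HHchain_*(B,M)\otimes\Id_V\cong\HHchain_*(B,M\otimes V)\), and \(M\otimes V\) is again an \(E,I\)\nb-bimodule that is H\nb-unitary as a right \(I\)\nb-module, so it suffices to show: \emph{for every \(E,I\)\nb-bimodule~\(N\) that is H\nb-unitary as a right \(I\)\nb-module, the canonical chain map \(\HHchain_*(I,N)\to\HHchain_*(E,N)\) is a quasi-isomorphism.} Granting this, the map in the theorem is a pure quasi-isomorphism; and when~\(\Unit\) is projective the forgetful functor \([\blank]\) is exact, so it sends the exact mapping cone of \(\HHchain_*(I,N)\to\HHchain_*(E,N)\) to an acyclic complex, whence \(\HH_*(I,N)\cong\HH_*(E,N)\) by the Puppe sequence.

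Second, I would realise \(\HHchain_*(I,N)\) as a subcomplex of \(\HHchain_*(E,N)\) with exact quotient. Iterating purity of \(I\mono E\epi Q\) -- tensoring the basic conflation with objects \(I^{\otimes a}\otimes E^{\otimes b}\) and using right exactness of the tensor product -- yields for each~\(n\) a finite filtration of \(E^{\otimes n}\) in which \(I^{\otimes n}\) is the bottom step, each step is a pure inflation, and the successive subquotients are the ``words'' \(D_1\otimes\dotsb\otimes D_n\) with each \(D_i\in\{I,Q\}\); tensoring with~\(N\) keeps this a filtration by conflations. Since~\(I\) is an ideal in~\(E\) and, by Lemma~\ref{lem:H-unitary_module_extends}, the right \(E\)\nb-module structure on~\(N\) restricts to its right \(I\)\nb-module structure, the Hochschild boundary of \(\HHchain_*(E,N)\) maps \(N\otimes I^{\otimes n}\) into \(N\otimes I^{\otimes n-1}\), where it agrees with the Hochschild boundary of \(\HHchain_*(I,N)\). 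Hence \(\HHchain_*(I,N)\mono\HHchain_*(E,N)\epi\bar C_\bullet\) is a conflation of chain complexes with \(\bar C_n=N\otimes\bigl(E^{\otimes n}/I^{\otimes n}\bigr)\), and by Lemma~\ref{lem:quasi-iso_criterion} it is enough to prove that \(\bar C_\bullet\) is exact.

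Third, I would prove \(\bar C_\bullet\) exact by a double filtration. Filter \(\bar C_\bullet\) by the number of \(Q\)\nb-letters in a word: because~\(I\) is an ideal, every multiplication in the boundary that touches an \(I\)\nb-letter stays in~\(I\), so the boundary never increases this number; the filtration is by subcomplexes, finite in each homological degree, so it suffices to show each associated graded \(\mathrm{gr}_p\bar C_\bullet\) (\(p\ge1\)) exact. On \(\mathrm{gr}_p\bar C_\bullet\) the only surviving parts of the boundary are the internal bar multiplications inside each maximal \(I\)\nb-block, the right action of the first \(I\)\nb-block on~\(N\), and the left action of the last \(I\)\nb-block on~\(N\) coming from the cyclic term. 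Now filter \(\mathrm{gr}_p\bar C_\bullet\) once more by the total length of all \(I\)\nb-blocks other than the first; this too is respected by the boundary and finite in each degree, and on its associated graded every component dies except the differential of the complex \((N\otimes I^{\otimes a},b')_{a\ge0}\) on the first block. Thus the subquotients are isomorphic to \((N\otimes I^{\otimes a},b')_{a\ge0}\otimes W\) for various objects~\(W\) assembled from the \(Q\)\nb-letters and the (now inert) other \(I\)\nb-blocks; and \((N\otimes I^{\otimes a},b')_{a\ge0}\) is pure exact because~\(N\) is H\nb-unitary as a right \(I\)\nb-module, so its tensor product with~\(W\) is exact. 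Hence \(\mathrm{gr}_p\bar C_\bullet\) is exact, \(\bar C_\bullet\) is exact, and the proof is complete.

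The main obstacle is the verification underlying the last two paragraphs: that purity really does turn each iterated tensor sequence occurring above -- the filtration steps of \(E^{\otimes n}\), their tensor products with~\(N\), and the resulting sequences of chain complexes -- into a conflation, and that one has correctly identified which summands of the Hochschild boundary survive each passage to an associated graded. This is exactly the bookkeeping carried out in~\cite{Guccione-Guccione:Excision} over~\(\Z\); the content of the theorem is that, once it is phrased through the pure exactness of the bar complexes of~\(N\) over~\(I\) and through Lemma~\ref{lem:quasi-iso_criterion}, it uses nothing beyond an additive symmetric monoidal exact category whose tensor product is right exact.
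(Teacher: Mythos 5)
Your strategy is sound and would yield a complete proof, but it is organised quite differently from the paper's. The paper (following Guccione--Guccione) never forms the quotient complex \(\bar C_\bullet\) at all: it interpolates between \(\HHchain_*(I,M)\otimes V\) and \(\HHchain_*(E,M)\otimes V\) by a single increasing chain of subcomplexes \(F_0\subseteq F_1\subseteq F_2\subseteq\dotsb\), where \((F_p)_n=M\otimes E^{\otimes n}\otimes V\) for \(n\le p\) and \(M\otimes I^{\otimes n-p}\otimes E^{\otimes p}\otimes V\) for \(n>p\); each inclusion \(F_p\mono F_{p+1}\) is an inflation directly by purity, and each quotient \(F_{p+1}/F_p\cong (M\otimes I^{\otimes k},b')_{k\ge0}[p+1]\otimes Q\otimes E^{\otimes p}\otimes V\) is exact by H\nb-unitarity, so Lemma~\ref{lem:quasi-iso_criterion} applies once per step and the conclusion follows because \(F_p\) agrees with \(\HHchain_*(E,M)\otimes V\) in each fixed degree for large~\(p\). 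The advantage of this telescope is that every object occurring in it is an explicit tensor product and every inflation and deflation is obtained by tensoring the given conflation with an object; no sums or intersections of subobjects are ever formed. Your route, by contrast, needs the full word filtration of \(E^{\otimes n}\), and this is where your deferred ``bookkeeping'' is more than routine in a general exact category: the intermediate steps of that filtration (already for \(n=2\), the subobject \(I\otimes E+E\otimes I=\ker(E^{\otimes2}\epi Q^{\otimes2})\)) and the identification of the associated graded of the \(Q\)\nb-letter filtration with a direct sum of words are not given to you by purity alone, but require Noether-type isomorphisms carried out in the ambient Abelian category together with an argument that the resulting subquotients lie in~\(\Cat\) and are the expected tensor products. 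Your two inner filtrations of \(\mathrm{gr}_p\bar C_\bullet\) and the identification of the final subquotients with \((N\otimes I^{\otimes a},b')_{a\ge0}\otimes W\) are correct (the cyclic term does strictly decrease the length of the blocks other than the first, so it dies where you say it does), and your reduction to \(N=M\otimes V\) is a harmless repackaging of the paper's practice of carrying~\(V\) along. So the proof goes through, but the paper's choice of filtration is specifically designed to avoid the subobject constructions that your decomposition forces you to justify.
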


\begin{proof}
  This theorem is an analogue of
  \cite{Guccione-Guccione:Excision}*{Theorem 2} and is proved
  by exactly the same argument.  For \(p\in\N\), let~\(F_p\) be
  the chain complex
  \begin{multline*}
    \dotsb \leftarrow 0 \leftarrow M \otimes V \xleftarrow{b}
    M\otimes E\otimes V
    \xleftarrow{b} M\otimes E\otimes E\otimes V
    \leftarrow \dotsb \leftarrow M\otimes E^{\otimes p}\otimes V
    \\\xleftarrow{b} M \otimes I\otimes E^{\otimes p}\otimes V
    \xleftarrow{b} M \otimes I\otimes I \otimes E^{\otimes p}\otimes V
    \xleftarrow{b} M \otimes I^{\otimes 3} \otimes E^{\otimes p}\otimes V
    \leftarrow \dotsb
  \end{multline*}
  with \(M\otimes V\) in degree~\(0\).

  Since the conflation \(I \mono E\epi Q\) is pure, the map
  \(M\otimes I^{\otimes k}\otimes E^{\otimes p}\otimes V \to
  M\otimes I^{\otimes k-1}\otimes E^{\otimes p+1}\otimes V\) is
  an inflation for all \(k,p\ge0\).  Hence the canonical map
  \(F_p\to F_{p+1}\) is an inflation for each~\(p\).  Its
  cokernel is the chain complex
  \[
  F_{p+1}/F_p \cong (M\otimes I^{\otimes k},b')_{k\ge0}[p+1]
  \otimes Q\otimes E^{\otimes p}\otimes V,
  \]
  where~\([p+1]\) denotes translation by~\(p+1\).  This chain complex is exact because~\(M\) is homologically unitary as a right \(I\)\nb-module.  Since \(F_p\mono F_{p+1}\epi F_{p+1}/F_p\) is a pure conflation of chain complexes, Lemma~\ref{lem:quasi-iso_criterion} shows that the map \(F_p\to F_{p+1}\) is a pure quasi-isomorphism.  Hence so are the embeddings \(F_0\to F_p\) for all \(p\in\N\).  For \(p=0\), we get \(F_0=\HHchain_*(I,M)\otimes V\).  In any fixed degree~\(n\), we have \((F_p)_n=\HHchain_n(E,M)\otimes V\) for \(p\ge n\).  Hence the canonical map \(\HHchain_*(I,M)\to\HHchain_*(E,M)\) is a pure quasi-isomorphism.
\end{proof}

\begin{corollary}
  \label{cor:HH_II_EI}
  Let \(I\mono E\epi Q\) be a pure algebra conflation.  If~\(I\) is homologically unital, then the canonical maps
  \[
  \xymatrix{
    \HHchain_*(I,I) \ar[r]\ar@{=}[d]&
    \HHchain_*(E,I)\ar@{=}[d]\\
    (I^{\otimes n+1},b)\ar[r]&
    (I\otimes E^{\otimes n},b)
  }\qquad
  \xymatrix{
    \HHchain_*(I,I) \ar[r]\ar@{=}[d]&
    \HHchain_*(I)\ar@{=}[d]\\
    (I^{\otimes n+1},b)\ar[r]&
    (\Omega^n(I),b)
  }
  \]
  are pure quasi-isomorphisms.  If~\(E\) is unital, then the unital extension of the embedding \(I\to E\) induces a pure quasi-isomorphism \(\HHchain_*(I)\to \HHchain_*(E,I)\).  Thus \(\HH_*(I)\to \HH_*(E,I)\) is invertible provided~\(\Unit\) is projective.
\end{corollary}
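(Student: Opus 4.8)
The plan is to deduce everything from Theorem~\ref{the:excision_ideal_extension} by choosing the coefficient bimodule~$M$ appropriately. First I would treat the left-hand square. Take $M = I$, regarded as an $E,I$\nb-bimodule via the inclusion $I\mono E$ on the left and multiplication on the right. Since~$I$ is homologically unital, it is in particular homologically unitary as a right $I$\nb-module, so Theorem~\ref{the:excision_ideal_extension} applies and gives that $\HHchain_*(I,I)\to\HHchain_*(E,I)$ is a pure quasi-isomorphism; unravelling the definitions identifies these complexes with $(I^{\otimes n+1},b)$ and $(I\otimes E^{\otimes n},b)$ respectively, which is the first claim. The identity $\HH_*(I)\cong\HH_*(E,I)$ then follows after tensoring with~$V$ when $\Unit$ is projective, because in that case a pure quasi-isomorphism induces isomorphisms on the (unrefined) homology obtained by applying the forgetful functor.

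For the right-hand square I would compare $\HHchain_*(I,I)$ with the reduced complex $\HHchain_*(I)=(\Omega^n(I),b)$ of non-commutative differential forms. Here the point is a standard one: the two-term filtration of $A^+ = A\oplus\Unit$ yields a short exact sequence of chain complexes whose quotient (the part coming from the $\Unit$ summand in the last tensor slot) is the complex $(I^{\otimes n},b')_{n\ge1}$ up to a degree shift, tensored with suitable factors. Purity of the relevant maps comes from purity of the conflation, and exactness of that quotient complex is exactly homological unitality of~$I$. Then Lemma~\ref{lem:quasi-iso_criterion} gives that the inclusion $\HHchain_*(I)\to\HHchain_*(I,I^+)$, composed with the projection onto $\HHchain_*(I,I)$, is a pure quasi-isomorphism; chasing this through one sees the map $(I^{\otimes n+1},b)\to(\Omega^n(I),b)$ is a pure quasi-isomorphism. (Alternatively, one observes directly that $\HHchain_*(I,I)$ and $\HHchain_*(I,I^+)$ differ by the acyclic complex $(I^{\otimes n},b')$, and that $\HHchain_*(I)$ is the kernel of the split augmentation $\HHchain_*(I,I^+)\to\Unit$.)

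Finally, for the last assertion, assume~$E$ is unital. The unital extension of $I\to E$ is the map $I\to E$ itself together with $\Unit\to E$; it induces a chain map $\HHchain_*(I)\to\HHchain_*(E,I)$, where now we use $\HHchain_*(E,I)=(I\otimes E^{\otimes n},b)$ in the form appropriate to unital~$E$. This map factors as the composite of the pure quasi-isomorphism $\HHchain_*(I)\to\HHchain_*(I,I)$ from the right-hand square (reversed) and the pure quasi-isomorphism $\HHchain_*(I,I)\to\HHchain_*(E,I)$ from the left-hand square, up to identifying the two models of $\HHchain_*(E,I)$ for unital~$E$ via the standard normalisation. Composites of pure quasi-isomorphisms are pure quasi-isomorphisms (refined homology is a functor, and tensoring with~$V$ preserves the property), so the composite is a pure quasi-isomorphism, and $\HH_*(I)\to\HH_*(E,I)$ is invertible whenever $\Unit$ is projective.

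\medskip

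The step I expect to be the main obstacle is the bookkeeping in the right-hand square: carefully identifying the quotient complex in the filtration of $A^+$ with a degree-shifted copy of $(I^{\otimes n},b')$ tensored with the correct factors, and checking that the resulting conflation of chain complexes is genuinely pure so that Lemma~\ref{lem:quasi-iso_criterion} applies degree by degree. Everything else is a direct application of Theorem~\ref{the:excision_ideal_extension} and formal properties of pure quasi-isomorphisms.
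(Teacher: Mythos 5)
Your proposal is correct and follows essentially the same route as the paper: the left-hand square is Theorem~\ref{the:excision_ideal_extension} applied to \(M=I\) viewed as an \(E,I\)\nb-bimodule, and the right-hand square comes from the degreewise split conflation \(\HHchain_*(I,I)\mono\HHchain_*(I)\epi(I^{\otimes n},b')_{n\ge1}[1]\) induced by \(I^+=I\oplus\Unit\), whose quotient is pure exact by H\nb-unitality, so Lemma~\ref{lem:quasi-iso_criterion} applies. The only slip is directional: the splitting occurs in the module (zeroth) tensor slot and the quasi-isomorphism in the right square is the \emph{inclusion} \(\HHchain_*(I,I)\mono\HHchain_*(I)\) (there is no chain-level projection the other way, since \(b\) does not preserve the summand \(I^{\otimes n+1}\)), but your parenthetical alternative states exactly the paper's argument, and your two-out-of-three treatment of the unital case is a legitimate way to supply the final assertion, which the paper leaves implicit.
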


Recall that \(\Omega^n(I)=I^+\otimes I^{\otimes n}\) for \(n\ge1\) and \(\Omega^0(I)=I\).

\begin{proof}
  The pure quasi-isomorphism \(\HHchain_*(I,I) \sim \HHchain_*(E,I)\) follows from Theorem~\ref{the:excision_ideal_extension} because~\(I\) is homologically unital if and only if it is homologically unitary as a right module over itself.  The split extension \(I\mono I^+\epi\Unit\) of modules induces a canonical split extension of chain complexes
  \[
  \HHchain_*(I,I) \mono \HHchain_*(I) \epi (I^{\otimes n},b')_{n\ge1}[1].
  \]
  Since~\(I\) is homologically unital, the chain complex \((I^{\otimes n},b')\) is pure exact.  Hence the map \(\HHchain_*(I,I) \mono \HHchain_*(I)\) is a pure quasi-isomorphism by Lemma~\ref{lem:quasi-iso_criterion}.
\end{proof}

\begin{theorem}
  \label{the:excision_extension_quotient}
  Let \(I\mono E\epi Q\) be a \emph{pure} algebra conflation, let~\(M\) be a \(Q\)\nb-bimodule.  Then we may view~\(M\) as an \(E\)\nb-bimodule.  If~\(I\) is homologically unital, then the canonical map \(\HHchain_*(E,M) \to \HHchain_*(Q,M)\) is a pure quasi-isomorphism.  Thus \(\HH_*(E,M\otimes V) \cong \HH_*(Q,M\otimes V)\) provided~\(\Unit\) is projective.
\end{theorem}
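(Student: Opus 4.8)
The plan is to imitate the proof of Theorem~\ref{the:excision_ideal_extension} and the corresponding argument in~\cite{Guccione-Guccione:Excision}, interpolating between $\HHchain_*(E,M)$ and $\HHchain_*(Q,M)$ by a chain of quasi\nb-isomorphisms. As there, I would carry an auxiliary tensor factor $\otimes V$ along throughout, so that the quasi\nb-isomorphisms produced are automatically pure; equivalently, since $\HHchain_*(E,M)\otimes V\cong\HHchain_*(E,M\otimes V)$ and $\HHchain_*(Q,M)\otimes V\cong\HHchain_*(Q,M\otimes V)$ and since $M\otimes V$ is again a $Q$\nb-bimodule, it is enough to prove the bare quasi\nb-isomorphism statement for every $Q$\nb-bimodule $M$. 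The last assertion then follows because $\Unit$ is projective, exactly as in Theorem~\ref{the:excision_ideal_extension}.

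For $p\in\N$, let $G_p$ be the quotient of $\HHchain_*(E,M)$ obtained by replacing, in each tensor power, the last $p$ tensor factors (all of them in low degrees) by their images in $Q$, so that $(G_p)_n=M\otimes E^{\otimes n-p}\otimes Q^{\otimes p}$ for $n\ge p$ and $(G_p)_n=M\otimes Q^{\otimes n}$ for $n\le p$. The Hochschild boundary $b$ descends to $G_p$: since $I$ is an ideal and, crucially, since $M$ is pulled back from a $Q$\nb-bimodule along $E\epi Q$ while $I\mono E\epi Q$ has zero composite, so that $I$ acts as zero on $M$ from both sides, every summand of $b$ that would either let a factor lying in $I$ act on $M$, or produce a factor lying in $I$ in one of the slots already projected onto $Q$, vanishes. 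Thus $G_0=\HHchain_*(E,M)$, while in each fixed degree $n$ the deflations $(G_p)_n\epi(G_{p+1})_n$ are isomorphisms once $p\ge n$, so that the colimit of $G_0\epi G_1\epi G_2\epi\dotsb$ is $\HHchain_*(Q,M)$. It therefore suffices to prove that each $G_p\to G_{p+1}$ is a (pure) quasi\nb-isomorphism, and then to compose these maps and pass to the colimit, as at the end of the proof of Theorem~\ref{the:excision_ideal_extension}.

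The map $G_p\to G_{p+1}$ applies $E\epi Q$ in a single tensor slot, hence is a deflation in every degree because the tensor product is right exact; by Lemma~\ref{lem:quasi-iso_criterion} it is a quasi\nb-isomorphism once its kernel complex $L_p$ is exact. Computing $L_p$ degreewise with the help of purity of $I\mono E\epi Q$, and using once more the vanishing of the obstructing summands of $b$, one finds
\[
L_p\;\cong\;\bigl(M\otimes E^{\otimes k}\otimes I,\,b'\bigr)_{k\ge0}[p+1]\otimes Q^{\otimes p},
\]
where the tail $Q^{\otimes p}$ (and the inert $\otimes V$) carries no differential. Everything thus reduces to the claim I expect to be the main obstacle: \emph{if $I$ is H\nb-unital and $N$ is a right $E$\nb-module on which $I$ acts as zero, then $\bigl(N\otimes E^{\otimes k}\otimes I,\,b'\bigr)_{k\ge0}$ is pure exact.} Its zeroth homology is the balanced tensor product $N\otimes_E I$, which vanishes because the multiplication $I\otimes I\to I$ is a deflation (H\nb-unitality; cf.\ Remark~\ref{rem:self-induced}) while $I$ acts as zero on $N$: a factorisation $x=x'x''$ in $I\cdot I$ lets one slide $x'$ onto $N$, where it acts by the zero map. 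For the vanishing in higher degrees I would argue by the same bookkeeping as in~\cite{Guccione-Guccione:Excision}, reorganising $\bigl(N\otimes E^{\otimes k}\otimes I,\,b'\bigr)$ in terms of the pure exact complexes $(I^{\otimes n},b')_{n\ge1}$ furnished by H\nb-unitality of $I$, purity being what cancels the terms not already killed by the triviality of the $I$\nb-action. Once this is done, composing the quasi\nb-isomorphisms $G_0\to G_1\to\dotsb$ and taking the colimit finishes the proof.
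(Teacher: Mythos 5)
Your strategy is the one the paper (following Guccione--Guccione) uses, reflected left to right: the paper's interpolating complexes $\tilde F_p$ have entries $M\otimes Q^{\otimes p}\otimes E^{\otimes n-p}\otimes V$, i.e.\ they project to $Q$ the slots \emph{adjacent to~$M$} first, whereas your $G_p$ projects the slots at the far end first. Your verification that $b$ descends to $G_p$ is correct, and so is your identification of the kernel $L_p\cong(M\otimes E^{\otimes k}\otimes I,b')_{k\ge0}[p+1]\otimes Q^{\otimes p}$; in particular the wrap\nb-around summand really does vanish on $L_p$, because on the quotient $G_p$ that summand is forced to project the slot carrying the $I$\nb-factor down to $Q$. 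So everything does reduce, as you say, to the pure exactness of $(N\otimes E^{\otimes k}\otimes I,b')_{k\ge0}$ for a right $E$\nb-module $N$ killed by $I$ --- and that is exactly where your write\nb-up stops: the degree\nb-zero computation is fine, but ``the same bookkeeping as in Guccione--Guccione'' for the higher degrees is not an argument, and this exactness claim is the entire point at which H\nb-unitality of $I$ enters. As it stands, this is a genuine gap.

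It is closable without any new bookkeeping, but not by quoting Theorem~\ref{the:excision_ideal_extension} as stated. Use the symmetry of the monoidal category to rotate the last factor to the front: $(N\otimes E^{\otimes k}\otimes I,b')\cong\bigl((I\otimes N)\otimes E^{\otimes k},b\bigr)=\HHchain_*(E,I\otimes N)$, where $I\otimes N$ carries the left $E$\nb-action on the $I$\nb-factor and the right $E$\nb-action on the $N$\nb-factor. The right $I$\nb-action on $I\otimes N$ is \emph{zero} (because $N\cdot I=0$), and a module with zero $I$\nb-action is never homologically unitary unless it vanishes, so the right\nb-module form of Theorem~\ref{the:excision_ideal_extension} does not apply. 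What you need is its opposite\nb-algebra (left\nb-module) variant: $I\otimes N$ \emph{is} homologically unitary as a left $I$\nb-module, since $(I^{\otimes n}\otimes I\otimes N,b')\cong(I^{\otimes n+1},b')\otimes N$ is pure exact, and the mirrored theorem then identifies $\HHchain_*(E,I\otimes N)$ up to pure quasi\nb-isomorphism with $\HHchain_*(I,I\otimes N)\cong(I^{\otimes n+1},b')\otimes N$, which is pure exact. This is precisely what the paper's orientation of the filtration buys: its kernels are governed by $(I\otimes E^{\otimes k},b')$ with the $I$\nb-factor sitting next to the coefficient module, i.e.\ by coefficients that are homologically unitary as \emph{right} $I$\nb-modules, so Theorem~\ref{the:excision_ideal_extension} applies verbatim with no detour through the opposite algebra.
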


\begin{proof}
  This is the analogue of \cite{Guccione-Guccione:Excision}*{Corollary 3}.  Let~\(\tilde{F}_p\) for \(p\ge0\) be the chain complex
  \begin{multline*}
    \dotsb \leftarrow 0 \leftarrow M\otimes V
    \xleftarrow{b} M\otimes Q\otimes V
    \xleftarrow{b} M\otimes Q\otimes Q\otimes V
    \leftarrow \dotsb
    \xleftarrow{b} M\otimes Q^{\otimes p}\otimes V\\
    \xleftarrow{b} M\otimes Q^{\otimes p} \otimes E\otimes V
    \xleftarrow{b} M\otimes Q^{\otimes p} \otimes E \otimes E\otimes V
    \leftarrow \dotsb
  \end{multline*}
  with \(M\otimes V\) in degree~\(0\).  One summand in~\(b\) uses the obvious right \(E\)\nb-module structure \(E\otimes Q\to Q\) on~\(Q\).

  Since the conflation \(I\mono E\epi Q\) is pure, the map \(\tilde{F}_p \to \tilde{F}_{p+1}\) induced by the deflation \(E\epi Q\) is a deflation for each \(p\in\N\).  Its kernel is
  \[
  \ker (\tilde{F}_p\to \tilde{F}_{p+1})
  \cong M \otimes Q^{\otimes p} \otimes
  (I \otimes E^{\otimes k},b')_{k\ge0}[p+1]\otimes V.
  \]
  Since~\(I\) is homologically unital, Theorem~\ref{the:excision_ideal_extension} implies that \((I\otimes E^{\otimes k},b')\) is pure exact.  Hence the map \(\tilde{F}_p\to \tilde{F}_{p+1}\) is a pure quasi-isomorphism by Lemma~\ref{lem:quasi-iso_criterion}.  Hence so is the map \(\tilde{F}_0\to\tilde{F}_p\) for any \(p\in\N\).  This yields the assertion because \(\tilde{F}_0= \HHchain_*(E,M)\otimes V\) and \((\tilde{F}_p)_n=\HHchain_n(Q,M)\otimes V\) in degree~\(n\) for \(p\ge n\).
\end{proof}

\begin{theorem}
  \label{the:HH_excision_coefficients}
  Let \(I\mono E\epi Q\) be a pure conflation of algebras in~\(\Cat\) and assume that~\(I\) is homologically unital.  Let \(M_I\mono M_E\epi M_Q\) be a pure conflation of \(E\)\nb-modules.  Assume that the \(E\)\nb-module structure on~\(M_Q\) descends to a \(Q\)\nb-module structure and that~\(M_I\) is homologically unitary as an \(I\)\nb-module.  Then \(\HHchain_*(I,M_I) \to \HHchain_*(E,M_E)\to \HHchain_*(Q,M_Q)\) is a pure cofibre sequence.

  If~\(\Unit\) is projective, then this yields a natural long exact sequence
  \begin{multline*}
  \dotsb \to \HH_n(I,M_I) \to \HH_n(E,M_E) \to \HH_n(Q,M_Q) \\
  \to \HH_{n-1}(I,M_I) \to \HH_{n-1}(E,M_E) \to \HH_{n-1}(Q,M_Q) \to \dotsb.
  \end{multline*}
\end{theorem}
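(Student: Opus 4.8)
The plan is to factor the comparison map and then recognise the resulting mapping cone using the two excision theorems already proved. In each degree~$n$ the canonical map $\HHchain_*(I,M_I)\to\HHchain_*(E,M_E)$ is the tensor product of the inflations $M_I\mono M_E$ and $I^{\otimes n}\mono E^{\otimes n}$, so it factors as a composite of chain maps $\HHchain_*(I,M_I)\overset{\alpha}\to\HHchain_*(E,M_I)\overset{\beta}\to\HHchain_*(E,M_E)$, where $M_I$ carries the $E$\nb-bimodule structure it inherits as a submodule of~$M_E$. Since $M_I$ is homologically unitary over~$I$ by hypothesis, Theorem~\ref{the:excision_ideal_extension}, applied to $M_I$ viewed as an $E,I$\nb-bimodule, shows that $\alpha$ is a pure quasi-isomorphism. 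Consequently the mapping cone of $\beta\circ\alpha$ is pure quasi-isomorphic to that of~$\beta$: the induced map of cones coming from the commuting square with vertical maps $\alpha$ and $\Id_{\HHchain_*(E,M_E)}$ is a pure quasi-isomorphism, for instance because it sits in a pure conflation of chain complexes $\cone(\beta\circ\alpha)\mono\cone(\beta)\epi\coker(\alpha)[1]$ whose last term is pure exact (as $\alpha$ is a pure quasi-isomorphism, by Lemma~\ref{lem:quasi-iso_criterion}), so Lemma~\ref{lem:quasi-iso_criterion} applies again.

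Next I would identify $\cone(\beta)$. Because the conflation of $E$\nb-modules $M_I\mono M_E\epi M_Q$ is pure, tensoring it with $E^{\otimes n}\otimes V$ leaves it a conflation for every~$n$ and every object~$V$; hence $\HHchain_*(E,M_I)\overset{\beta}\mono\HHchain_*(E,M_E)\epi\HHchain_*(E,M_Q)$ is a conflation of chain complexes that remains one after $\otimes V$. This yields a pure conflation of chain complexes $\cone(\Id_{\HHchain_*(E,M_I)})\mono\cone(\beta)\epi\HHchain_*(E,M_Q)$ whose left-hand term is contractible, hence pure exact, so Lemma~\ref{lem:quasi-iso_criterion} shows that the canonical map $\cone(\beta)\to\HHchain_*(E,M_Q)$ is a pure quasi-isomorphism. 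Finally, $I$ is homologically unital and the $E$\nb-bimodule structure on~$M_Q$ descends to~$Q$, so Theorem~\ref{the:excision_extension_quotient} shows that $\HHchain_*(E,M_Q)\to\HHchain_*(Q,M_Q)$ is a pure quasi-isomorphism. Composing the three pure quasi-isomorphisms $\cone(\beta\circ\alpha)\to\cone(\beta)\to\HHchain_*(E,M_Q)\to\HHchain_*(Q,M_Q)$, and inspecting the mapping cones to see that the composite acts only on the $\HHchain_*(E,M_E)$\nb-coordinate and is there given by the reduction $\HHchain_*(E,M_E)\to\HHchain_*(Q,M_Q)$, one identifies it with the canonical map attached to $g=(\HHchain_*(E,M_E)\to\HHchain_*(Q,M_Q))$ and the zero null-homotopy of the composite $\HHchain_*(I,M_I)\to\HHchain_*(E,M_E)\to\HHchain_*(Q,M_Q)$. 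Thus $\HHchain_*(I,M_I)\to\HHchain_*(E,M_E)\to\HHchain_*(Q,M_Q)$ is a pure cofibre sequence.

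For the last assertion, if $\Unit$ is projective then the canonical forgetful functor $\Hom(\Unit,\blank)$ is exact, so the Puppe long exact sequence attached to a cofibre sequence passes from refined homology to ordinary homology; taking $V=\Unit$ and using $\HHchain_*(A,M)\otimes\Unit\cong\HHchain_*(A,M)$ gives the stated natural long exact sequence in $\HH_*$. The one point requiring care is purely bookkeeping: keeping track that every intermediate conflation of chain complexes stays a conflation after tensoring with an arbitrary object~$V$, so that each comparison map is genuinely a \emph{pure} quasi-isomorphism, and carrying out the compatibility check that identifies the triple composite with the canonical mapping-cone map, which is what makes the resulting long exact sequence the natural one. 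Everything else is formal, given the two excision theorems and Lemma~\ref{lem:quasi-iso_criterion}.
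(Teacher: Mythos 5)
Your proposal is correct and follows essentially the same route as the paper: the pure quasi-isomorphism $\HHchain_*(I,M_I)\to\HHchain_*(E,M_I)$ from Theorem~\ref{the:excision_ideal_extension}, the pure quasi-isomorphism $\HHchain_*(E,M_Q)\to\HHchain_*(Q,M_Q)$ from Theorem~\ref{the:excision_extension_quotient}, and the pure conflation $\HHchain_*(E,M_I)\mono\HHchain_*(E,M_E)\epi\HHchain_*(E,M_Q)$ coming from purity of $M_I\mono M_E\epi M_Q$. The only difference is that you spell out the mapping-cone bookkeeping (the octahedron-style comparison of $\cone(\beta\circ\alpha)$ with $\cone(\beta)$ and the identification of the composite with the canonical map) that the paper leaves implicit in its final ``hence''.
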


\begin{proof}
  The canonical map \(\HHchain_*(I,M_I) \to \HHchain_*(E,M_I)\) is a pure quasi-isomorphism by Theorem~\ref{the:excision_ideal_extension} because~\(M_I\) is homologically unitary as an \(I\)\nb-module.  The canonical map \(\HHchain_*(E,M_Q) \to \HHchain_*(Q,M_Q)\) is a pure quasi-isomorphism by Theorem~\ref{the:excision_extension_quotient} because~\(I\) is homologically unital.  The sequence \(\HHchain_*(E,M_I) \mono \HHchain_*(E,M_E) \to \HHchain_*(E,M_Q)\) is a pure conflation of chain complexes and thus a pure cofibre sequence because the conflation \(M_I\mono M_E\epi M_Q\) is pure.  Hence \(\HHchain_*(I,M_I) \to \HHchain_*(E,M_E) \to \HHchain_*(Q,M_Q)\) is a pure cofibre sequence as well.  If~\(\Unit\) is projective, that is, the canonical forgetful functor maps conflations to exact sequences, then this cofibre sequence implies a long exact homology sequence.
\end{proof}

\begin{theorem}
  \label{the:HH_excision}
  Let \(I\mono E\epi Q\) be a pure conflation of algebras in~\(\Cat\).  Assume that~\(I\) is homologically unital.  Then \(\HHchain_*(I) \to \HHchain_*(E)\to \HHchain_*(Q)\) is a pure cofibre sequence.  If~\(\Unit\) is projective in~\(\Cat\), this yields a natural long exact sequence
  \[
  \dotsb \to \HH_n(I) \to \HH_n(E) \to \HH_n(Q) \to \HH_{n-1}(I) \to \HH_{n-1}(E) \to \dotsb.
  \]
  If~\(\Unit\) is injective in~\(\Cat\), then there is a natural long exact sequence
  \[
  \dotsb \to \HH^n(I) \to \HH^n(E) \to \HH^n(Q) \to \HH^{n+1}(I) \to \HH^{n+1}(E) \to \dotsb.
  \]
\end{theorem}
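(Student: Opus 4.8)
The plan is to derive this from Theorem~\ref{the:HH_excision_coefficients} by the same formal step that~\cite{Guccione-Guccione:Excision} uses to pass from excision with coefficients to excision without. The only point requiring attention is the passage between $\HHchain_*(A,A)$ and $\HHchain_*(A)$ for a non-unital algebra~$A$, and this is exactly where homological unitality of~$I$ is used.

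First I would feed the pure algebra conflation $I\mono E\epi Q$ into Theorem~\ref{the:HH_excision_coefficients} \emph{twice}. Once, viewing $I\mono E\epi Q$ as a pure conflation of $E$\nb-bimodules with its given bimodule structures: the $E$\nb-bimodule structure on~$Q$ descends to a $Q$\nb-bimodule structure, and~$I$ is homologically unitary as a right module over itself because it is homologically unital, so Theorem~\ref{the:HH_excision_coefficients} yields the pure cofibre sequence
\[
\HHchain_*(I,I)\to\HHchain_*(E,E)\to\HHchain_*(Q,Q).
\]
A second time, viewing $I\mono E\epi Q$ as a pure conflation of $E$\nb-bimodules in which every left module structure has been set to zero: the hypotheses hold for the same reasons, and since $\HHchain_*(A,A)$ with its left action killed is the bar-type complex $(A^{\otimes n},b')_{n\ge1}$, Theorem~\ref{the:HH_excision_coefficients} then yields a pure cofibre sequence
\[
(I^{\otimes n},b')_{n\ge1}\to(E^{\otimes n},b')_{n\ge1}\to(Q^{\otimes n},b')_{n\ge1}.
\]

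Next I would use the natural split extension of chain complexes
\[
\HHchain_*(A,A)\mono\HHchain_*(A)\epi(A^{\otimes n},b')_{n\ge1}[1]
\]
from the proof of Corollary~\ref{cor:HH_II_EI}; tensoring with any object~$V$ keeps it a conflation of chain complexes, hence a pure cofibre sequence. Taking $A\in\{I,E,Q\}$ and shifting the second cofibre sequence above by one so that its terms match the quotients, one obtains a commuting $3\times3$ diagram of chain complexes whose three columns are these split sequences and whose top and bottom rows are the two pure cofibre sequences just constructed. The $3\times3$\nb-lemma for cofibre sequences --~valid in the derived category of the ambient Abelian category, and stable under $\blank\otimes V$ since all its inputs here are pure~-- then forces the middle row $\HHchain_*(I)\to\HHchain_*(E)\to\HHchain_*(Q)$ to be a pure cofibre sequence. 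One may also bypass the $3\times3$\nb-lemma: since~$I$ is homologically unital, $(I^{\otimes n},b')_{n\ge1}$ is pure exact, so $\HHchain_*(I,I)\to\HHchain_*(I)$ is a pure quasi-isomorphism, and $(E^{\otimes n},b')_{n\ge1}\to(Q^{\otimes n},b')_{n\ge1}$ is a pure quasi-isomorphism by Lemma~\ref{lem:quasi-iso_criterion}; the claim then follows from the Puppe long exact sequences for refined homology and the five lemma. Finally, since a pure cofibre sequence is in particular a cofibre sequence, the long exact sequence in Hochschild homology follows by applying the canonical forgetful functor when~$\Unit$ is projective, and the one in Hochschild cohomology by applying the exact dual forgetful functor $\Hom(\blank,\Unit)$ when~$\Unit$ is injective, as in Section~\ref{sec:exact_qi_Ho}.

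The step I expect to demand the most care is this final assembly: one must check that the split extensions $\HHchain_*(A,A)\mono\HHchain_*(A)\epi(A^{\otimes n},b')_{n\ge1}[1]$ are natural enough in~$A$ to form a genuine morphism between the two pure cofibre sequences, and that the resulting $3\times3$ pattern (or the equivalent five-lemma chase on refined homology) really does upgrade to a cofibre sequence in the middle row. The remainder is a direct appeal to results already established.
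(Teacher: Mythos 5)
Your argument is correct and follows the same overall strategy as the paper -- everything is reduced to Theorem~\ref{the:HH_excision_coefficients} and Corollary~\ref{cor:HH_II_EI} -- but the bookkeeping that passes from coefficients to the coefficient-free complexes \(\HHchain_*(A)\) is done differently. The paper applies Theorem~\ref{the:HH_excision_coefficients} only \emph{once}, to the module conflation \(I\mono E^+\epi Q^+\) (pure because it is the direct sum of \(I\mono E\epi Q\) with \(0\to\Unit\xrightarrow{=}\Unit\)), and then uses \(\HHchain_*(A,A^+)=\HHchain_*(A)\oplus\Unit\) to cancel two copies of \(\Unit\) and land directly on the cofibre sequence \(\HHchain_*(I,I)\to\HHchain_*(E)\to\HHchain_*(Q)\), after which Corollary~\ref{cor:HH_II_EI} replaces the first term by \(\HHchain_*(I)\). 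Your route applies the theorem twice (the second time with the left actions killed, which is a legitimate and rather nice way to produce the bar-complex cofibre sequence) and then assembles the result by a \(3\times3\) argument on the degreewise split columns \(\HHchain_*(A,A)\mono\HHchain_*(A)\epi(A^{\otimes n},b')_{n\ge1}[1]\). That assembly does work: the columns are degreewise split, so the horizontal mapping cones again form a conflation of chain complexes, and the five lemma applied to the two Puppe sequences in refined homology forces the middle comparison map to be a quasi-isomorphism; all inputs are pure, so the conclusion is pure. The unitalisation trick simply avoids this extra diagram chase. Two small caveats on your proposed shortcut: \((I^{\otimes n},b')\to(E^{\otimes n},b')\to(Q^{\otimes n},b')\) is \emph{not} a conflation of chain complexes (for \(n\ge2\) the kernel of \(E^{\otimes n}\epi Q^{\otimes n}\) is strictly larger than \(I^{\otimes n}\)), so Lemma~\ref{lem:quasi-iso_criterion} does not apply to it directly; that \((E^{\otimes n},b')\to(Q^{\otimes n},b')\) is a pure quasi-isomorphism instead follows from your second cofibre sequence together with the pure exactness of \((I^{\otimes n},b')\). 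And knowing only this plus the quasi-isomorphism \(\HHchain_*(I,I)\to\HHchain_*(I)\) is not enough for a bare five-lemma comparison of the top and middle rows, since \(\HHchain_*(E,E)\to\HHchain_*(E)\) need not be a quasi-isomorphism; one still needs the full \(3\times3\) pattern, so the bypass is not genuinely shorter.
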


\begin{proof}
  Let \(E^+\) and~\(Q^+\) be the algebras obtained from \(E\) and~\(Q\) by adjoining unit elements.  The algebra conflation \(I\mono E^+\epi Q^+\) is also a conflation of modules, and it is still pure because it is the direct sum of the pure conflation \(I\mono E\epi Q\) and the split extension \(0\to \Unit\xrightarrow{=} \Unit\).  Hence Theorem~\ref{the:HH_excision_coefficients} applies and yields a pure cofibre sequence \(\HHchain_*(I,I) \to \HHchain_*(E,E^+) \to \HHchain_*(Q,Q^+)\).  By definition, \(\HHchain_*(A)\oplus\Unit = \HHchain_*(A,A^+)\) for \(A\in\{E,Q\}\).  Cancelling two copies of~\(\Unit\), we get a pure cofibre sequence \(\HHchain_*(I,I) \to \HHchain_*(E) \to \HHchain_*(Q)\).  Finally, Corollary~\ref{cor:HH_II_EI} yields a pure quasi-isomorphism \(\HHchain_*(I,I) \to \HHchain_*(I)\), so that we get a pure cofibre sequence \(\HHchain_*(I) \to \HHchain_*(E) \to \HHchain_*(Q)\).

  The projectivity or injectivity of~\(\Unit\) ensures that we preserve the cofibre sequence when we apply the canonical forgetful functor or the dual space functor.  Finally, this cofibre sequence of (co)chain complexes yields the asserted long exact sequences in Hochschild homology and cohomology.
\end{proof}

Theorem \ref{the:HH_excision} is our abstract Excision Theorem for Hochschild homology and cohomology.  We can specialise it to various exact symmetric monoidal categories.

For the Abelian category~\(\Ab\), we get Wodzicki's original Excision Theorem for pure ring extensions with H\nb-unital kernel.  Our notions of purity and H\nb-unitality are the familiar ones in this case.  The dual space functor is not exact, so that we do not get assertions in cohomology.

For the Abelian category of vector spaces over some field instead of~\(\Ab\), any extension is pure and the dual space functor is exact.  Hence Hochschild homology and cohomology satisfy excision for all extensions with homologically unital kernel, and the latter means simply that \((I^{\otimes n},b')\) is exact.

Now consider the quasi-Abelian category of Fr\'echet spaces (with all extensions as conflations).  Purity means that \(I\prot V\to E\prot V\to Q\prot V\) is an extension of Fr\'echet spaces or, equivalently, an extension of vector spaces, for each Fr\'echet space~\(V\).  This is automatic if~\(Q\) is nuclear by Theorem~\ref{the:nuclear_Frechet_locally_split_extension}.  Furthermore, split extensions are pure for trivial reasons.  The dual space functor is exact by the Hahn--Banach Theorem, so that we get excision results both for Hochschild homology and cohomology.  H\nb-unitality of~\(I\) means that the chain complex \((I^{\prot n},b')_{n\ge1}\prot V\) is exact for each Fr\'echet space~\(V\), and exactness is equivalent to the vanishing of homology.  Furthermore, Theorem~\ref{the:nuclear_Frechet_locally_split_extension} shows that a nuclear Fr\'echet algebra~\(I\) is homologically unital if and only if the homology of the chain complex \((I^{\prot n},b')\) vanishes.

Let~\(\Cat\) be an additive symmetric monoidal category in which all idempotent morphisms split.  Turn~\(\Cat\) into an exact category using only the split extensions~\(\Split\).  Then any object of~\(\Cat\) is both projective and injective, and any conflation is pure because~\(\otimes\) is additive.  H\nb-unitality means that the chain complex \((I^{\otimes n},b')\) is contractible.  Thus the Excision Theorem applies to a split extension provided \((I^{\otimes n},b')\) is contractible.  The conclusion is that the map \(\HHchain_*(I) \to \cone\bigl(\HHchain_*(E)\to\HHchain_*(Q)\bigr)\) is a chain homotopy equivalence.

In the application to Whitney functions, we would like to compute \(\HH^*(Q,Q)\) by homological computations with \(E\)\nb-modules.  This is only possible under an additional injectivity assumption:

\begin{theorem}
  \label{the:excision_extension_quotient_cohomology}
  Let \(I\mono E\epi Q\) be a pure algebra conflation, let~\(M\) be a \(Q\)\nb-bimodule, which we also view as an \(E\)\nb-bimodule.  Assume that~\(I\) is homologically unital and that~\(M\) is injective as an object of~\(\Cat\).  Then the canonical map \(\HHchain^*(E,M) \to \HHchain^*(Q,M)\) is a quasi-isomorphism, so that \(\HH^*(E,M) \cong \HH^*(Q,M)\).
\end{theorem}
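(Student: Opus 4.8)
The plan is to dualise the proof of Theorem~\ref{the:excision_extension_quotient}, replacing every expression $M\otimes(\blank)\otimes V$ occurring there by a $\Hom(\blank,M)$ and using the injectivity of~$M$ in exactly the places where purity is invoked on the homological side. The key observation is that, since $M$ is an injective object of~$\Cat$, the contravariant functor $\Hom(\blank,M)\colon\Cat\to\Ab$ is exact: it sends conflations to short exact sequences of Abelian groups, hence sends deflations to injective maps with cokernel $\Hom(\ker,M)$, sends conflations of chain complexes to short exact sequences of cochain complexes, and -- arguing degreewise through kernels, as for any exact functor -- sends exact (in particular pure exact) chain complexes in~$\Cat$ to acyclic cochain complexes of Abelian groups.

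Concretely, for $p\in\N$ let $\tilde G^p$ be the subcomplex of the cochain complex $\HHchain^*(E,M)=(\Hom(E^{\otimes n},M),b^*)$ consisting of those cochains $E^{\otimes n}\to M$ that factor through the deflation $E^{\otimes n}\epi Q^{\otimes\min(n,p)}\otimes E^{\otimes\max(0,n-p)}$ induced by $E\epi Q$; that this is indeed a subcomplex, and is the cohomological analogue of the chain complex $\tilde F_p$ from the proof of Theorem~\ref{the:excision_extension_quotient}, follows exactly as there. Then $\tilde G^0=\HHchain^*(E,M)$; in each fixed degree~$n$ one has $\tilde G^p_n=\HHchain^n(Q,M)$ once $p\ge n$; and the inclusions $\dotsb\mono\tilde G^{p+1}\mono\tilde G^p\mono\dotsb\mono\tilde G^0$ have projective limit the subcomplex $\HHchain^*(Q,M)$, the inclusion $\HHchain^*(Q,M)\mono\tilde G^0$ being the canonical comparison map (a cochain on~$Q$ pulls back to one on~$E$).

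Since the algebra conflation $I\mono E\epi Q$ is pure, the map $Q^{\otimes p}\otimes E^{\otimes k+1}\epi Q^{\otimes p+1}\otimes E^{\otimes k}$ induced by $E\epi Q$ on the first $E$\nb-factor is a deflation with kernel $Q^{\otimes p}\otimes I\otimes E^{\otimes k}$; applying the exact functor $\Hom(\blank,M)$, the transition map $\tilde G^{p+1}\to\tilde G^p$ is a degreewise inflation whose cokernel is, by the same computation as in the proof of Theorem~\ref{the:excision_extension_quotient}, the cochain complex with $\Hom(Q^{\otimes p}\otimes I\otimes E^{\otimes k},M)$ in degree $p+1+k$ and differential dual to the $b'$\nb-differential on $(I\otimes E^{\otimes k})_{k\ge0}$. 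Because $I$ is homologically unital, $(I\otimes E^{\otimes k},b')_{k\ge0}$ is pure exact by Theorem~\ref{the:excision_ideal_extension} (exactly as in the proof of Theorem~\ref{the:excision_extension_quotient}), hence so is its tensor product with $Q^{\otimes p}$, and applying the exact functor $\Hom(\blank,M)$ yields an acyclic cochain complex. By Lemma~\ref{lem:quasi-iso_criterion}, $\tilde G^{p+1}\mono\tilde G^p$ is therefore a quasi-isomorphism, and composing these, so is $\tilde G^p\to\tilde G^0$ for every~$p$. Since the mapping cone of $\HHchain^*(Q,M)\to\HHchain^*(E,M)$ coincides in each degree with that of $\tilde G^p\to\tilde G^0$ once~$p$ is large enough relative to that degree, and the latter is acyclic, $\HHchain^*(Q,M)\to\HHchain^*(E,M)$ is a quasi-isomorphism of cochain complexes of Abelian groups; passing to cohomology gives $\HH^*(E,M)\cong\HH^*(Q,M)$. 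No projectivity or injectivity of~$\Unit$ intervenes, since $\HHchain^*$ already takes values in cochain complexes of Abelian groups.

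The one genuinely new point over the homological statement -- and the main thing to get right -- is to make $\Hom(\blank,M)$ play the role that $M\otimes(\blank)\otimes V$ plays in the proof of Theorem~\ref{the:excision_extension_quotient}. Purity of the algebra conflation is still needed, to identify the subquotients $\tilde G^p/\tilde G^{p+1}$ and to obtain pure exactness of $(I\otimes E^{\otimes k},b')$, exactly as there; the injectivity of~$M$ is what lets these ingredients be transported across $\Hom(\blank,M)$ -- turning deflations into inflations and pure exact chain complexes into acyclic cochain complexes -- and this is precisely where the hypothesis of the present theorem goes beyond that of Theorem~\ref{the:excision_extension_quotient}. Everything else is a verbatim translation of the argument for Theorem~\ref{the:excision_extension_quotient}.
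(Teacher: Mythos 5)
Your proposal is correct and follows essentially the same route as the paper: the paper's proof also dualises the argument for Theorem~\ref{the:excision_extension_quotient}, introducing the interpolating cochain complexes $\tilde F_p$ (your $\tilde G^p$), using purity of the conflation together with injectivity of~$M$ to identify the subquotients as $\bigl(\Hom(Q^{\otimes p}\otimes I\otimes E^{\otimes k},M),(b')^*\bigr)$, and invoking Theorem~\ref{the:excision_ideal_extension} and Lemma~\ref{lem:quasi-iso_criterion} exactly as you do. Your version is if anything slightly more careful about the direction of the comparison maps (the inclusions run $\tilde G^{p+1}\mono\tilde G^p$, i.e.\ from more $Q$\nobreakdash-factors to fewer) and about why no projectivity or injectivity of~$\Unit$ is needed.
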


\begin{proof}
  Let~\(\tilde{F}_0\) for \(p\ge0\) be the cochain complex
  \begin{multline*}
    \Hom(\Unit, M) \xrightarrow{b^*} \Hom(Q,M)
    \xrightarrow{b} \Hom(Q\otimes Q,M)
    \to \dotsb
    \xrightarrow{b^*} \Hom(Q^{\otimes p},M)\\
    \xrightarrow{b^*} \Hom(Q^{\otimes p} \otimes E,M)
    \xrightarrow{b^*} \Hom(Q^{\otimes p} \otimes E \otimes E,M)
    \xrightarrow{b^*} \Hom(Q^{\otimes p} \otimes E^{\otimes 3},M)
    \to \dotsb
  \end{multline*}
  where~\(b^*\) is the Hochschild coboundary map that uses the bimodule structure on~\(M\) and the obvious right \(E\)\nb-module structure \(Q\otimes E\to Q\) on~\(Q\).  Since our algebra conflation is pure and~\(M\) is injective as an object of~\(\Cat\), we get an exact sequence of chain complexes
  \begin{equation}
    \label{eq:isocoh_quotient}
    \tilde{F}_p \mono \tilde{F}_{p+1} \epi \bigl(\Hom(Q^{\otimes p}\otimes I \otimes E^{\otimes n}[p+1],M),(b')^*\bigr).
  \end{equation}
  Theorem~\ref{the:excision_ideal_extension} implies that the chain complex \(V\otimes (I\otimes E^{\otimes k},b')\) is exact for any~\(V\) because~\(I\) is homologically unital.  Since~\(M\) is injective, the quotient complex in~\eqref{eq:isocoh_quotient} is exact.  Hence the map \(\tilde{F}_p\to \tilde{F}_{p+1}\) is a quasi-isomorphism by Lemma~\ref{lem:quasi-iso_criterion}.  Then so is the map \(\tilde{F}_0\to\tilde{F}_p\) for any \(p\in\N\).  This yields the assertion because \(\tilde{F}_0= \HHchain^*(E,M)\) and \(\tilde{F}_p^n=\HHchain^n(Q,M)\) for \(p\ge n\).
\end{proof}

Since there are few injective Fr\'echet spaces, this theorem rarely applies to the category of Fr\'echet spaces with all extensions as conflations.  One example of an injective nuclear Fr\'echet space is \(\prod_{n\in\N} \C\), the space of Whitney functions on a discrete subset of a smooth manifold.  The Schwartz space, which is isomorphic to \(\Cinf(X)\) for a non-discrete compact manifold~\(X\) and to \(\Flat{Y}{X}\) for a proper closed subset of a compact manifold~\(X\), is not injective.

A more careful choice of the conflations improves the situation.  By definition, Banach spaces are injective for locally split extensions in \(\Proban\) and hence for pro-locally split extensions of Fr\'echet spaces.  This will later allow us to do some Hochschild cohomology computations with Banach space coefficients for algebras of Whitney functions.

If we restrict to split extensions, then all objects of~\(\Cat\) become injective, so that we get the following result:

\begin{corollary}
  \label{cor:weak_excision_HH_cohomology_split}
  Let~\(\Cat\) be an additive symmetric monoidal category, equip it with the class of split extensions.  Let \(I\mono E\epi Q\) be a split extension in~\(\Cat\) and let~\(M\) be a \(Q\)\nb-bimodule.  If \((I^{\otimes n},b')\) is split exact, then \(\HH^*(E,M) \cong \HH^*(Q,M)\).
\end{corollary}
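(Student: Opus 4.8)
The plan is to obtain Corollary~\ref{cor:weak_excision_HH_cohomology_split} as a direct specialisation of Theorem~\ref{the:excision_extension_quotient_cohomology} to the exact category $(\Cat,\Split)$. The three hypotheses of that theorem --~that $I\mono E\epi Q$ be a \emph{pure} algebra conflation, that $I$ be homologically unital, and that $M$ be injective as an object of~$\Cat$~-- all become either automatic or identical to the single hypothesis of the corollary once we use the split exact structure, so no new argument is needed.

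First I would observe that when $\Cat$ carries the exact structure~$\Split$, every conflation is pure: a split extension is isomorphic to a direct sum extension, and since $\otimes$ is bi-additive it carries direct sum extensions to direct sum extensions. Hence the split algebra conflation $I\mono E\epi Q$ is pure. Secondly, every object of~$\Cat$ is injective with respect to~$\Split$, since a morphism out of a direct summand always extends by composing with the coordinate projection; in particular the $Q$\nb-bimodule~$M$ is injective. Thirdly, by Definition~\ref{def:H-unital_algebra}, saying that $I$ is homologically unital means that the chain complex $(I^{\otimes n},b')_{n\ge1}$ is pure exact; but in $(\Cat,\Split)$ purity of a chain complex is automatic --~again by bi-additivity of~$\otimes$, applied degreewise, together with $C_\bullet\otimes\Unit\cong C_\bullet$~-- so ``pure exact'' coincides with ``$\Split$\nb-exact'' in the sense of Definition~\ref{def:exact_chain} (cf.\ Example~\ref{exa:split_quasi-iso}), which is precisely the hypothesis ``$(I^{\otimes n},b')$ is split exact''.

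With all three hypotheses verified, Theorem~\ref{the:excision_extension_quotient_cohomology} supplies a quasi-isomorphism $\HHchain^*(E,M)\to\HHchain^*(Q,M)$, and passing to cohomology yields $\HH^*(E,M)\cong\HH^*(Q,M)$. There is essentially no obstacle in this argument; the only point requiring a moment's care is the identification, in the split structure, of ``pure exact'' with ``split exact'', so that the stated hypothesis on $(I^{\otimes n},b')$ is literally the assertion that $I$ is homologically unital in $(\Cat,\Split)$.
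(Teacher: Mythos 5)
Your proposal is correct and follows essentially the same route as the paper: observe that in \((\Cat,\Split)\) every conflation is pure and every object is injective, identify the hypothesis on \((I^{\otimes n},b')\) with H\nb-unitality, and invoke the cohomological excision theorem. The only difference is that the paper's own proof cites Theorem~\ref{the:excision_extension_quotient} (the homological version), which appears to be a typo for Theorem~\ref{the:excision_extension_quotient_cohomology}; your citation is the correct one.
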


\begin{proof}
  Here any object of~\(\Cat\) is injective and any conflation is pure.  The assumption means that~\(I\) is H\nb-unital.  Hence the assertion follows from Theorem~\ref{the:excision_extension_quotient}.
\end{proof}

\section{Hochschild homology for algebras of smooth functions}
\label{sec:HH_smooth}

In this section, we work in the symmetric monoidal category \(\Frech\) of Fr\'echet spaces.  Thus \(\otimes = \prot\) is the complete projective topological tensor product of Fr\'echet spaces.  The resulting Hochschild homology and cohomology are the \emph{continuous} Hochschild homology and cohomology of Fr\'echet algebras.  We let all extensions be conflations unless we explicitly require another exact category structure on~\(\Frech\).

Let~\(X\) be a smooth manifold, possibly non-compact, and let \(\Cinf(X)\) be the Fr\'echet algebra of smooth functions on~\(X\) with the topology of locally uniform convergence of all derivatives.  The \(k\)th continuous Hochschild homology of \(\Cinf(X)\) is the space \(\Omega^k(X)\) of smooth differential \(k\)\nb-forms on~\(X\); by definition, this is the space of smooth sections of the vector bundle \((\Lambda^k \Tang X)^* \cong \Lambda^k (\Tang^*X)\).  The continuous Hochschild cohomology of \(\Cinf(X)\) is the topological dual space of continuous linear functionals on \(\Omega^k(X)\).  By definition, this is the space of distributional sections of the vector bundle \(\Lambda^k(\Tang X)\), called \emph{de Rham currents} of dimension~\(k\).

The ``continuity'' of the Hochschild homology and cohomology means that we work in the symmetric monoidal category \(\Frech\) with the tensor product~\(\prot\).  Thus \(\HHchain_*\bigl(\Cinf(X)\bigr)\defeq (\Cinf(X)^{\otimes n},b)\) involves the completed tensor products
\[
\Cinf(X)^{\otimes n} \defeq \Cinf(X)^{\prot n} \cong \Cinf(X^n).
\]
The continuous linear functionals on \(\Cinf(X)^{\prot n}\) correspond bijectively to (jointly) continuous \(n\)\nb-linear functionals \(\Cinf(X)^n\to\C\) by the universal property of~\(\prot\); hence we may describe continuous Hochschild cohomology without~\(\prot\) (as in~\cite{Connes:Noncommutative_Diffgeo}).

The continuous Hochschild cohomology of \(\Cinf(X)\) was computed by Alain Connes in~\cite{Connes:Noncommutative_Diffgeo}*{Section II.6} to prepare for the computation of its cyclic and periodic cyclic cohomology; his argument can also be used to compute the continuous Hochschild homology of \(\Cinf(X)\).  Several later argument by Jean-Luc Brylinski and Victor Nistor~\cite{Brylinski-Nistor:Cyclic_etale} and by Nicolae Teleman~\cite{Teleman:Localization_Hochschild} use localisation near the diagonal to compute the Hochschild homology and cohomology of \(\Cinf(X)\).  This localisation approach is more conceptual but, as it seems, gives slightly less information.

The chain complex \(\HHchain_*(A) = (A^{\otimes n},b)\) is a chain complex of \(A\)\nb-modules via
\[
a_0 \cdot (a_1 \otimes\dotsb \otimes a_n) \defeq
(a_0 \cdot a_1) \otimes\dotsb \otimes a_n
\]
for \(a_0,\dotsc,a_n\in A\) -- notice that this defines a chain map, that is, \(a_0\cdot b(\omega) = b(a_0\cdot\omega)\) if and only if~\(A\) is commutative.  Thus \(\HH_*\bigl(\Cinf(X)\bigr)\) inherits such a module structure as well.  The isomorphism \(\HH_*\bigl(\Cinf(X)\bigr) \cong \Omega^*(X)\) identifies this module structure on \(\HH_*\bigl(\Cinf(X)\bigr)\) with the obvious module structure on differential forms by pointwise multiplication.  We will need an even stronger result:

\begin{theorem}
  \label{the:HH_Cinf}
  Let~\(X\) be a smooth manifold.  The anti-symmetrisation map
  \begin{multline*}
    j\colon \Omega^k(X) \to \Cinf(X)^{\otimes k+1},\\    
    f_0 \,\diff f_1\wedge\dotsb\wedge \diff f_k \mapsto
    \sum_{\sigma\in \Sym{k}} (-1)^{\abs{\sigma}}
    f_0 \otimes f_{\sigma(1)}\otimes\dotsb\otimes f_{\sigma(k)}
  \end{multline*}
  and the map
  \[
  k\colon \Cinf(X)^{\otimes k+1} \to  \Omega^k(X),\qquad
  f_0 \otimes f_1\otimes\dotsb\otimes f_k \mapsto
  \frac{1}{k!}f_0 \,\diff f_1\wedge\dotsb\wedge \diff f_k
  \]
  are \(\Cinf(X)\)-linear continuous chain maps between \(\Omega^*(X)\) with the zero boundary map and \(\HHchain_*\bigl(\Cinf(X)\bigr) \defeq (\Cinf(X)^{\otimes n+1},b)\) that are inverse to each other up to \(\Cinf(X)\)-linear continuous chain homotopy.  More precisely, \(k\circ j=\Id_{\Omega^*(X)}\) and \(j\circ k = [b,h]\) for a \(\Cinf(X)\)-linear continuous map~\(h\) on~\(\HHchain_*\bigl(\Cinf(X)\bigr)\) of degree~\(1\).
\end{theorem}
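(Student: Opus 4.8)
The plan is to dispatch the maps $j$, $k$ and the identity $k\circ j=\Id$ directly, and to obtain the homotopy~$h$ by comparing the topological bar resolution of $\Cinf(X)$ with Connes's projective bimodule resolution and pushing the comparison homotopy down to the Hochschild complex. First I would record that $j$ and $k$ are continuous and $\Cinf(X)$-linear, which is immediate from the formulas and the universal property of~$\prot$. That $j$ is a chain map means $b\circ j=0$: the summand of~$b$ multiplying the adjacent entries $f_{\sigma(i)}f_{\sigma(i+1)}$ is unchanged by the transposition $(i\,i{+}1)$ while $(-1)^{\abs{\sigma}}$ changes sign, so these terms cancel in pairs, and the two ``boundary'' summands of~$b$ cancel against each other after the cyclic symmetrisation. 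That $k$ is a chain map means $k\circ b=0$, the computation behind the Hochschild--Kostant--Rosenberg theorem: expanding $\diff(f_if_{i+1})=f_i\,\diff f_{i+1}+f_{i+1}\,\diff f_i$ in $k\bigl(b(f_0\otimes\dots\otimes f_{n+1})\bigr)$ and collecting terms makes everything cancel. Both computations are formal and hold verbatim for smooth functions. Finally $k\circ j=\Id_{\Omega^*(X)}$ is a one-line count, since $(-1)^{\abs{\sigma}}(-1)^{\abs{\sigma}}=1$ and there are $\abs{\Sym{k}}=k!$ summands, cancelling the factor~$\tfrac1{k!}$.

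For the homotopy I would use Connes's explicit resolution of $\Cinf(X)$ as a module over $\Cinf(X)\prot\Cinf(X)\cong\Cinf(X\times X)$, equivalently as a $\Cinf(X)$-bimodule, from~\cite{Connes:Noncommutative_Diffgeo}*{Section~II.6}. Its $k$th term is $P_k\cong\Cinf(X\times X)\mathbin{\hat\otimes_{\Cinf(X)}}\Omega^k(X)$, the space of continuous sections over $X\times X$ of the pullback of $\Lambda^k\Tang^*X$ along a projection, with a Koszul-type differential; being sections of a vector bundle over $X\times X$, each $P_k$ is projective relative to bimodule extensions that split as extensions of Fréchet spaces, and Connes constructs the resolution together with explicit continuous bimodule-linear contracting homotopies, built from a tubular neighbourhood of the diagonal and a cutoff function, so that $P_\bullet\to\Cinf(X)$ is contractible in this exact category. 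The topological bar resolution $B_\bullet$ of $\Cinf(X)$, with $B_n=\Cinf(X)^{\prot n+2}\cong\Cinf(X^{n+2})$, has the same two properties, its contractibility coming from the usual extra degeneracy. The comparison theorem of relative homological algebra therefore produces $\Cinf(X\times X)$-linear chain maps $\Phi\colon B_\bullet\to P_\bullet$ and $\Psi\colon P_\bullet\to B_\bullet$ lifting $\Id_{\Cinf(X)}$, unique up to $\Cinf(X\times X)$-linear homotopy, with $\Psi\Phi\simeq\Id$ and $\Phi\Psi\simeq\Id$ through such homotopies.

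Now apply the functor $\Cinf(X)\mathbin{\hat\otimes_{\Cinf(X\times X)}}(\blank)$. It carries $B_\bullet$ to $\HHchain_*\bigl(\Cinf(X)\bigr)$, carries $P_\bullet$ to $\bigl(\Omega^*(X),0\bigr)$, and carries $\Phi$, $\Psi$ to continuous chain maps linear over the diagonal copy of $\Cinf(X)$ in $\Cinf(X\times X)$, which is exactly the module structure in the statement. Computing Connes's comparison maps degree by degree identifies the descent of $\Phi$ with~$k$ and that of~$\Psi$ with~$j$: the normalisation $\tfrac1{k!}$ and the alternating sum over $\Sym{k}$ are precisely Connes's formulas. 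The descent of the homotopy $\Psi\Phi\simeq\Id_{B_\bullet}$ is then a continuous $\Cinf(X)$-linear degree-one operator~$h$ on $\HHchain_*\bigl(\Cinf(X)\bigr)$ with $\Id-j\circ k=bh+hb=[b,h]$, while the descent of $\Phi\Psi\simeq\Id_{P_\bullet}$ re-proves $k\circ j=\Id_{\Omega^*(X)}$ because the target differential is zero.

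The main obstacle lies entirely in the middle step: one must check that Connes's resolution, and above all its contracting homotopy, genuinely lives in the exact category of Fréchet $\Cinf(X)$-bimodules with the linearly split bimodule extensions as conflations, so that the comparison maps and homotopies can be chosen continuous and bimodule-linear -- this leans on the explicit tubular-neighbourhood-plus-cutoff construction of~\cite{Connes:Noncommutative_Diffgeo} and on the nuclearity of~$\Cinf$ -- and one must pin the descended comparison maps down to be \emph{exactly} $j$ and~$k$, not merely homotopic to them, which is a finite explicit check in each degree rather than an appeal to abstract uniqueness. Everything else -- continuity, $\Cinf(X)$-linearity, and the three chain-level identities -- is routine once this framework is in place.
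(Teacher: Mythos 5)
Your strategy is essentially the one the paper uses: verify $b\circ j=0$, $k\circ b=0$ and $k\circ j=\Id$ directly from commutativity and the Leibniz rule, and obtain~$h$ by comparing the bar resolution with Connes' projective bimodule resolution in the exact category of bimodule extensions that are semi-split in $\Frech$, then descending along the commutator quotient functor $M\mapsto M/[\Cinf(X),M]$ (your $\Cinf(X)\mathbin{\hat\otimes}_{\Cinf(X\times X)}(\blank)$), whose naturality is what makes the descended homotopy $\Cinf(X)$-linear. Your observation that the descended homotopy for $\Phi\Psi\simeq\Id_{P_\bullet}$ is killed by the zero differential, so that $k\circ j=\Id$ holds on the nose, is also how things fit together in the paper.

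There is, however, one genuine gap in the middle step. Connes' Koszul-type resolution $P_k\cong\Cinf\bigl(X\times X, p^*\Lambda^k\bigr)$ with its explicit contracting homotopy does \emph{not} exist for an arbitrary smooth manifold: its differential is contraction with a section over $X\times X$ that must vanish exactly on the diagonal (to first order, transversally), and the existence of such a section is obstructed by the Euler class. The construction as you describe it -- tubular neighbourhood plus cutoff -- only goes through when~$X$ carries a nowhere vanishing vector field, i.e.\ when every connected component of~$X$ is non-compact or has vanishing Euler characteristic; it fails already for $X=\Sphere^2$. The paper closes this gap by running the argument for $X\times\Sphere^1$, which always admits such a vector field, and then using functoriality to identify $\HHchain_*\bigl(\Cinf(X)\bigr)$ with the range of the idempotent on $\HHchain_*\bigl(\Cinf(X\times\Sphere^1)\bigr)$ induced by $(x,z)\mapsto(x,1)$, under which the homotopy equivalence with $\bigl(\Omega^*(X\times\Sphere^1),0\bigr)$ restricts to one with $\bigl(\Omega^*(X),0\bigr)$. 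You need to add this reduction (or some substitute, such as a Mayer--Vietoris/partition-of-unity patching of local resolutions, which would be a genuinely different and longer argument) before your proof covers all smooth manifolds. The remaining points you flag -- that the resolution and its homotopy live in the semi-split exact category, and that the descended comparison maps are exactly $j$ and~$k$ rather than merely homotopic to them -- are indeed the parts requiring inspection of Connes' formulas, and the paper treats them the same way.
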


\begin{proof}
  The commutativity of \(\Cinf(X)\) and the Leibniz rule \(\diff(f_1f_2) = f_1\,\diff f_2 + f_2\,\diff f_1\) in \(\Omega^*(X)\) imply \(b\circ j=0\) and \(k\circ b=0\), that is, \(j\) and~\(k\) are chain maps.  The equation \(k\circ j = \Id_{\Omega^*(X)}\) is obvious.  The only assertion that requires work is to find the \(\Cinf(X)\)-linear chain homotopy~\(h\).  The existence of such a chain homotopy follows easily from Connes' argument in~\cite{Connes:Noncommutative_Diffgeo}.  The following remarks provide some more details for readers who do not accept this one sentence as a proof.

  First we recall how the Hochschild chain complex for a Fr\'echet algebra~\(A\) is related to projective resolutions.  We must explain what ``projective resolution'' means.  The following discussion applies to any algebra~\(A\) in a symmetric monoidal category~\(\Cat\).  We call an extension of \(A\)\nb-bimodules \emph{semi-split} if it splits in~\(\Cat\) (but the splitting need not be \(A\)\nb-linear).  The semi-split extensions are the conflations of an exact category structure on the category of \(A\)-bimodules.  For any object~\(V\) of~\(\Cat\), we equip \(A\otimes V\otimes A\) with the obvious \(A\)\nb-bimodule structure and call such \(A\)\nb-bimodules \emph{free}.  Free bimodules are projective with respect to semi-split bimodule extensions.  The bar resolution \((A^{\otimes n+2},b')_{n\ge0}\) is contractible in~\(\Cat\) and hence a projective resolution of~\(A\) in the exact category of \(A\)\nb-bimodules with semi-split extensions as conflations.  Hence it is chain homotopy equivalent to any other projective \(A\)\nb-bimodule resolution of~\(A\) in this exact category.

  The \emph{commutator quotient} of an \(A\)\nb-bimodule is the cokernel of the commutator map \(A\otimes M\to M\), \(a\otimes m\mapsto [a,m] \defeq a\cdot m-m\cdot a\).  Since the commutator quotient of a free module \(A\otimes V\otimes A\) is naturally isomorphic to \(A\otimes V\), the commutator quotient complex of the bar resolution is the Hochschild chain complex \((A^{\otimes n+1},b)\).  If the algebra~\(A\) is commutative, then the commutator quotient of an \(A\)\nb-bimodule is still an \(A\)\nb-module in a canonical way.  Thus the Hochschild complex is a chain complex of \(A\)\nb-modules in a canonical way.

  If~\(P_\bullet\) is another projective \(A\)\nb-bimodule resolution of~\(A\), then~\(P_\bullet\) is chain homotopy equivalent to the bar resolution as a chain complex of \(A\)\nb-bimodules.  Hence the Hochschild chain complex is chain homotopy equivalent to the commutator quotient complex \(P_\bullet/[P_\bullet,A]\).  If~\(A\) is commutative, then this chain homotopy is \(A\)\nb-linear because the \(A\)\nb-module structure on commutator quotients is natural.

  Now we return to the Fr\'echet algebra \(\Cinf(X)\).  Connes computes the Hochschild cohomology of \(\Cinf(X)\) by constructing another projective \(\Cinf(X)\)-bimodule resolution~\(P_\bullet\) of \(\Cinf(X)\) for which the commutator quotient complex \(P_\bullet/[P_\bullet,\Cinf(X)]\) is \(\Omega^k(X)\) with zero boundary map.  As our discussion above shows, this implies that \((\Omega^*(X),0)\) is chain homotopy equivalent to \(\HHchain_*\bigl(\Cinf(X)\bigr)\) as a chain complex of \(\Cinf(X)\)-modules.  An inspection of Connes' argument also shows that the chain maps involved in this homotopy equivalence are \(j\) and~\(k\).

  More precisely, Connes' construction only applies if~\(X\) carries a nowhere vanishing vector field or, equivalently, if each connected component of~\(X\) is either non-compact or has vanishing Euler characteristic.  The case of a general smooth manifold is reduced to this case by considering \(X\times\Sphere^1\), which does carry such a vector field, and then relating the Hochschild cohomology of \(\Cinf(X)\) and \(\Cinf(X\times\Sphere^1)\).  The functoriality of Hochschild cohomology implies that \(\HHchain_*\bigl(\Cinf(X)\bigr)\) is isomorphic to the range of the map on \(\HHchain_*\bigl(\Cinf(X\times\Sphere^1)\bigr)\) induced by the map \(X\times\Sphere^1\to X\times\Sphere^1\), \((x,z)\mapsto (x,1)\).  Under the homotopy equivalence between \(\HHchain_*\bigl(\Cinf(X\times\Sphere^1)\bigr)\) and \((\Omega^*(X\times\Sphere^1),0)\), this map corresponds to a projection onto \((\Omega^*(X),0)\).
\end{proof}

The additional statements about chain homotopy equivalence in Theorem~\ref{the:HH_Cinf} seem difficult to prove with the localisation method because the latter involves contractible subcomplexes that are either not even closed (such as the chain complex of functions vanishing in some neighbourhood of the diagonal) or are not complementable (such as the chain complex of functions that are flat on the diagonal).

\subsection{The algebra of smooth functions with compact support}
\label{sec:smooth_compact_support}

Now we want to replace the Fr\'echet algebra \(\Cinf(X)\) by the dense subalgebra \(\Ccinf(X)\) of smooth functions with compact support.  This is an LF-space in a natural topology: Let~\((K_n)_{n\in\N}\) be an increasing sequence of compact subsets exhausting~\(X\), then \(\Ccinf(X)\) is the strict inductive limit of the subspaces of \(\Cinf(X)\) of smooth functions that vanish outside~\(K_n\).  This is a topological algebra, that is, the multiplication is jointly continuous.  Nevertheless, we will view \(\Ccinf(X)\) as a bornological algebra in the following, that is, replace it by \(\Prec \Ccinf(X)\).  This is preferable because the projective bornological tensor product agrees with Grothendieck's inductive tensor product for nuclear LF-spaces, so that \(\Prec \Ccinf(X) \hot \Prec \Ccinf(Y) \cong \Prec \Ccinf(X\times Y)\) for all smooth manifolds \(X\) and~\(Y\).  In contrast, the projective topological tensor product is \(\Ccinf(X\times Y)\) with a complicated topology.  Since we want tensor powers of \(\Ccinf(X)\) to be \(\Ccinf(X^n)\), we must either define tensor products in an \emph{ad hoc} way as in~\cite{Brodzki-Plymen:Periodic} or work bornologically.

We turn the category of complete bornological \(\Ccinf(X)\)-modules into an exact category using the class of split extensions as conflations, that is, conflations are extensions of \(\Ccinf(X)\)-modules with a bounded linear section.

We already have a projective bimodule resolution for \(\Cinf(X)\) and want to use it to construct one for \(\Ccinf(X)\).  Given a \(\Ccinf(X)\)-module~\(M\), we let \(M_\cpt\subseteq M\) be the subspace of all \(m\in M\) for which there is \(f\in\Ccinf(X)\) with \(m=f\cdot m\).  This agrees with \(\Ccinf(X)\cdot M\) because \(\Cinf(X)_\cpt = \Ccinf(X)\).  A subset~\(S\) of~\(M_\cpt\) is called \emph{bounded} if it is bounded in~\(M\) and there is a single \(f\in\Ccinf(X)\) with \(f\cdot m = m\) for all \(m\in S\).  This defines a complete bornology on~\(M_\cpt\).  The subspace~\(M_\cpt\) is still a module over \(\Cinf(X)\) and, \emph{a fortiori}, over~\(\Ccinf(X)\).  The multiplication maps \(\Ccinf(X)\times M_\cpt\to M_\cpt\) and \(\Cinf(X)\times M_\cpt\to M_\cpt\) are both bounded.

\begin{proposition}
  \label{pro:compact_support_functor_exact}
  The functor \(M\mapsto M_\cpt\) is exact.  If~\(M\) is a projective \(\Cinf(X)\)-module, then~\(M_\cpt\) is projective both as a \(\Ccinf(X)\)-modules and as a \(\Cinf(X)\)-module.
\end{proposition}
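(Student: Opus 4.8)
The plan is to make the submodule~$M_\cpt$ completely explicit and then verify both assertions by bookkeeping with cut-off functions.  Fix an exhaustion $K_1\subseteq K_2\subseteq\dotsb$ of~$X$ by compact sets with $\bigcup_n K_n=X$ and $K_n\subseteq\operatorname{int}K_{n+1}$, and choose $\chi_n\in\Ccinf(X)$ with $\chi_n\equiv1$ on a neighbourhood of~$K_n$ and $\operatorname{supp}\chi_n\subseteq\operatorname{int}K_{n+1}$, so that $\chi_{n+1}\chi_n=\chi_n$.  For a bornological $\Ccinf(X)$-module~$M$ put $M^{(n)}\defeq\{m\in M : \chi_n\cdot m=m\}$ with the subspace bornology from~$M$.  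First I would check that $M_\cpt=\bigcup_n M^{(n)}$, that the inclusions $M^{(n)}\hookrightarrow M^{(n+1)}$ are bornological embeddings with $M^{(n)}\subseteq\chi_n\cdot M\subseteq M^{(n+2)}$, and that a subset of~$M_\cpt$ is bounded if and only if it is bounded in~$M$ and contained in some~$M^{(n)}$; that is, $M_\cpt=\varinjlim_n M^{(n)}$ in the bornological sense.  Since any $\Ccinf(X)$-linear map sends $M^{(n)}$ boundedly into~$N^{(n)}$, the assignment $M\mapsto M_\cpt$ is an additive functor that manifestly commutes with finite direct sums of modules.

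For exactness, let $I\overset{i}\mono E\overset{p}\epi Q$ be a conflation of $\Ccinf(X)$-modules with bounded linear section~$s$ of~$p$.  As~$i$ and~$p$ are $\Ccinf(X)$-linear, they restrict to maps $I_\cpt\to E_\cpt\to Q_\cpt$, and I would check that $i\colon I_\cpt\to E_\cpt$ is a bornological embedding (since~$i$ is injective, $\Ccinf(X)$-linear and a bornological embedding, we get $I^{(n)}=i^{-1}(E^{(n)})$ with matching bornologies), that $\ker(p|_{E_\cpt})=i(I_\cpt)$ (if $x\in E^{(n)}$ and $p(x)=0$ then $x=i(y)$ and $x=\chi_n x=i(\chi_n y)$ force $y\in I^{(n)}$), and that $p|_{E_\cpt}$ is a bornological quotient map onto~$Q_\cpt$ (a bounded $B\subseteq Q^{(n)}$ is the image of the bounded set $\chi_n\cdot s(B)\subseteq E^{(n+2)}$).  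So $I_\cpt\mono E_\cpt\epi Q_\cpt$ is an extension of $\Ccinf(X)$-modules; it remains to split it by a bounded linear map.

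The splitting is where I expect the real work.  The naive partial section $q\mapsto\chi_n\cdot s(q)$ on~$Q^{(n)}$ takes values in~$E_\cpt$ but depends on~$n$, so I would globalise it with a partition of unity: fix a locally finite smooth partition of unity $(\varphi_j)_{j\in J}$ on~$X$ with $\varphi_j\in\Ccinf(X)$, and for each~$j$ an auxiliary $\psi_j\in\Ccinf(X)$ with $\psi_j\equiv1$ on~$\operatorname{supp}\varphi_j$, and set
\[
\sigma\colon Q_\cpt\to E_\cpt,\qquad \sigma(q)\defeq\sum_{j\in J}\psi_j\cdot s(\varphi_j\cdot q).
\]
For $q\in Q^{(n)}$ only the finitely many~$j$ with $\operatorname{supp}\varphi_j\cap\operatorname{supp}\chi_n\neq\emptyset$ contribute (local finiteness and compactness of~$\operatorname{supp}\chi_n$), so~$\sigma$ is well defined and linear, $\sigma(Q^{(n)})\subseteq E^{(k)}$ as soon as~$K_k$ contains the supports of the relevant~$\psi_j$, and $\sigma|_{Q^{(n)}}$ is bounded, being a finite sum of the bounded maps $q\mapsto\psi_j\cdot s(\varphi_j\cdot q)$; hence~$\sigma$ is bounded on $Q_\cpt=\varinjlim_n Q^{(n)}$.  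Using $p\circ s=\operatorname{id}$, the $\Ccinf(X)$-linearity of~$p$, $\psi_j\varphi_j=\varphi_j$ and $\sum_j\varphi_j=1$ one computes $p\circ\sigma=\operatorname{id}_{Q_\cpt}$.  Thus the extension is split, i.e.\ a conflation, and $M\mapsto M_\cpt$ is exact.  The one point genuinely needing care is the boundedness of~$\sigma$, which must be tested against the inductive-limit bornology of~$Q_\cpt$.

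For projectivity, recall that retracts of free modules are projective in the split exact structure, and that over the unital algebra~$\Cinf(X)$ every projective module is such a retract.  So if~$M$ is projective over~$\Cinf(X)$, choose $\Cinf(X)$-module maps $M\overset{u}\to\Cinf(X)\hot V\overset{v}\to M$ with $v\circ u=\operatorname{id}_M$; applying the functor $(\blank)_\cpt$ exhibits~$M_\cpt$ as a retract, over~$\Cinf(X)$ and over~$\Ccinf(X)$, of $(\Cinf(X)\hot V)_\cpt$.  Writing $\Cinf_{K_n}(X)\defeq\{f\in\Cinf(X):\operatorname{supp}f\subseteq K_n\}$, the first paragraph gives $(\Cinf(X)\hot V)_\cpt=\varinjlim_n\Cinf_{K_n}(X)\hot V\cong\Ccinf(X)\hot V$ (using that~$\hot$ commutes with these inductive limits and, for the nuclear Fr\'echet space~$\Cinf(X)$, preserves the relevant closed subspaces).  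This is a free, hence projective, $\Ccinf(X)$-module (as~$\Ccinf(X)$ has a bounded approximate identity), so~$M_\cpt$ is projective over~$\Ccinf(X)$.  It is moreover a retract of the free $\Cinf(X)$-module $\Cinf(X)\hot\Ccinf(X)\hot V$, since the multiplication $\Cinf(X)\hot\Ccinf(X)\to\Ccinf(X)$ is split by the $\Cinf(X)$-linear map $h\mapsto\sum_j(\varphi_j h)\otimes\psi_j$ (it is $\Cinf(X)$-linear because $\Cinf(X)$ acts on the first tensor leg and $f\varphi_j h=\varphi_j(fh)$), and one tensors this retraction with~$\operatorname{id}_V$.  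Hence $\Ccinf(X)\hot V$, and with it its retract~$M_\cpt$, is projective over~$\Cinf(X)$ as well; besides the section~$\sigma$, the identification $(\Cinf(X)\hot V)_\cpt\cong\Ccinf(X)\hot V$ is the other place where I would slow down, as it again reduces to handling inductive-limit bornologies.
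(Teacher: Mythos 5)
Your argument is correct and is essentially the paper's own: exactness is obtained by localising the given bounded linear section with a partition of unity (your \(\sigma(q)=\sum_j\psi_j\cdot s(\varphi_j\cdot q)\) versus the paper's \(\sum_n\varphi_n\cdot s(\varphi_n\cdot f)\) with \(\sum\varphi_n^2=1\)), and projectivity is reduced to \((\Cinf(X)\hot V)_\cpt\cong\Ccinf(X)\hot V\) together with a module-linear splitting of the multiplication map, your \(h\mapsto\sum_j(\varphi_j h)\otimes\psi_j\) being a concrete instance of the function \(w\) in the paper's Lemma~\ref{lem:Ccinf_projective}. The only loose phrase is calling \(\Ccinf(X)\hot V\) ``free, hence projective, as \(\Ccinf(X)\) has a bounded approximate identity'': an approximate identity is not what is needed, but the explicit \(\Ccinf(X)\)-linear splitting you construct in the same paragraph is, so the content is complete.
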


\begin{proof}
  Exactness means that \(I_\cpt \mono E_\cpt \epi Q_\cpt\) is a semi-split extension if \(I\mono E\epi Q\) is a semi-split extension of \(\Cinf(X)\)-modules.  Let \(s\colon Q\to E\) be a bounded linear section.  Let \((\varphi_n)_{n\in\N}\) be a locally finite set of compactly supported functions with \(\sum \varphi_n^2 = 1\), that is, \(\varphi_n^2\) is a partition of unity.  We define \(s_\cpt(f) \defeq \sum_{n\in\N} \varphi_n\cdot s(\varphi_n\cdot f)\).  This is still a well-defined bounded linear section \(Q\to E\), but this new section restricts to a bounded linear map \(Q_\cpt\to E_\cpt\) because for each compact subset~\(K\) we have \(\varphi_n|_K=0\) for almost all~\(n\) and each~\(\varphi_n\) has compact support.  As a consequence, the functor \(M\mapsto M_\cpt\) is exact.

  Since any projective \(\Cinf(X)\)-module is a direct summand of a free module \(\Cinf(X)\hot V\), the claim about projectivity means that \((\Cinf(X)\hot V)_\cpt\) is projective as a \(\Cinf(X)\)-module and as a \(\Ccinf(X)\)-module for any~\(V\).  Since \((\Cinf(X)\hot V)_\cpt \cong \Ccinf(X)\hot V\) and \(M\hot V\) is projective once~\(M\) is projective, we must check that \(\Ccinf(X)\) is projective as a \(\Cinf(X)\)-module and as a \(\Ccinf(X)\)-module.  Equivalently, the multiplication maps \(\Cinf(X)\hot \Ccinf(X) \to \Ccinf(X)\) and \(\Ccinf(X)^+\hot \Ccinf(X)\to \Ccinf(X)\) split by a module homomorphism.  This follows from the following lemma, which finishes the proof.
\end{proof}

\begin{lemma}
  \label{lem:Ccinf_projective}
  The multiplication map \(\Ccinf(X)\hot \Ccinf(X)\to \Ccinf(X)\) has bounded linear sections \(\sigma_\leftsub\) and~\(\sigma_\rightsub\) that are a left and a right module homomorphism, respectively.
\end{lemma}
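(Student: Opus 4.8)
The plan is to build both sections by hand from a partition of unity, using that $\Ccinf(X)$ has local units. First I would fix a locally finite smooth partition of unity $(\varphi_n)_{n\in\N}$ on~$X$ with each $\varphi_n\in\Ccinf(X)$ and $\sum_{n\in\N}\varphi_n=1$, and for each~$n$ choose $\psi_n\in\Ccinf(X)$ that is identically~$1$ on a neighbourhood of the (compact) support of~$\varphi_n$, so that $\psi_n\varphi_n=\varphi_n$. Then I set
\[
\sigma_\leftsub(f)\defeq\sum_{n\in\N}(\varphi_n f)\otimes\psi_n,\qquad
\sigma_\rightsub(f)\defeq\sum_{n\in\N}\psi_n\otimes(\varphi_n f).
\]

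The first thing to check is that these formulas make sense and define bounded linear maps $\Ccinf(X)\to\Ccinf(X)\hot\Ccinf(X)$, and this is exactly where local finiteness of $(\varphi_n)$ does the work. If $f$ is supported in a compact set~$K$, then $\varphi_n f=0$ for all but the finitely many~$n$ whose support meets~$K$, so each series is in fact a finite sum in the algebraic tensor product; moreover $\varphi_n f\otimes\psi_n$ is then supported in the fixed compact set $(\operatorname{supp}\varphi_n)\times(\operatorname{supp}\psi_n)$. Since a bounded (here: precompact) subset of the strict LF-space $\Ccinf(X)$ lies in, and is bounded in, the subspace of functions supported in some fixed compact~$K$, the restriction of $\sigma_\leftsub$ to that subspace is a \emph{finite} sum of the maps $f\mapsto(\varphi_n f)\otimes\psi_n$, each of which is bounded (multiplication by the fixed function~$\varphi_n$, followed by $g\mapsto g\otimes\psi_n$, both bounded). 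Hence $\sigma_\leftsub$, and symmetrically $\sigma_\rightsub$, is bounded.

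It remains to verify the two algebraic identities. Applying the multiplication map~$m$ gives $m\bigl(\sigma_\leftsub(f)\bigr)=\sum_n\varphi_n f\psi_n=\sum_n\varphi_n f=f$, using $\psi_n\varphi_n=\varphi_n$ and $\sum_n\varphi_n=1$, and the same computation works for $\sigma_\rightsub$. For the module property, recall that the left $\Ccinf(X)$-action on $\Ccinf(X)\hot\Ccinf(X)$ multiplies into the first tensor leg and the right action into the second; thus for $g\in\Ccinf(X)$ one has $\sigma_\leftsub(gf)=\sum_n(\varphi_n gf)\otimes\psi_n=g\cdot\sigma_\leftsub(f)$ and $\sigma_\rightsub(fg)=\sum_n\psi_n\otimes(\varphi_n fg)=\sigma_\rightsub(f)\cdot g$. (Read with the first leg in $\Cinf(X)$, the same formula for $\sigma_\leftsub$ also splits $\Cinf(X)\hot\Ccinf(X)\to\Ccinf(X)$ by a $\Cinf(X)$-module map, and $f\mapsto 1\otimes f$ splits $\Ccinf(X)^+\hot\Ccinf(X)\to\Ccinf(X)$, which is what Proposition~\ref{pro:compact_support_functor_exact} uses.) There is no genuine obstacle here; the only point requiring a little care is the bornological bookkeeping that turns the formally infinite sums into \emph{bounded} maps into the \emph{completed} tensor product, and this is dissolved by the local finiteness of the partition of unity, which makes every relevant sum finite on each step of the LF-space. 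It is worth noting that the construction is intrinsically asymmetric --~$\varphi_n f$ occupies the first leg in $\sigma_\leftsub$ and the second in $\sigma_\rightsub$~-- and that in general one cannot hope for a single section that is simultaneously a left and a right module homomorphism.
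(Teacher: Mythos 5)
Your proof is correct and is essentially the paper's argument made explicit: under the identification \(\Ccinf(X)\hot\Ccinf(X)\cong\Ccinf(X\times X)\) your section is \((\sigma_\leftsub f)(x,y)=f(x)\,w(x,y)\) with \(w(x,y)=\sum_n\varphi_n(x)\psi_n(y)\), which is precisely a function \(w\) of the kind the paper postulates (smooth by local finiteness, equal to \(1\) on the diagonal, with support proper over the first factor), so you have supplied the step the paper dismisses as ``routine to check''. One small caveat on your closing parenthesis: \(f\mapsto 1\otimes f\) is only a \emph{linear} section of \(\Ccinf(X)^+\hot\Ccinf(X)\to\Ccinf(X)\), not a module-map section (since \(1\otimes gf\neq g\otimes f\)); the module-homomorphism splitting needed in Proposition~\ref{pro:compact_support_functor_exact} is again \(\sigma_\leftsub\), viewed as landing in \(\Ccinf(X)^+\hot\Ccinf(X)\).
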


\begin{proof}
  Recall that \(\Ccinf(X)\hot \Ccinf(X)\cong \Ccinf(X\times X)\).  The multiplication map becomes the map \(\Ccinf(X\times X)\to\Ccinf(X)\) that restricts functions to the diagonal.  It suffices to describe~\(\sigma_\leftsub\), then \((\sigma_\rightsub f)(x,y) \defeq (\sigma_\leftsub f)(y,x)\) provides~\(\sigma_\rightsub\).  We make the Ansatz \((\sigma_\leftsub f)(x,y) \defeq f(x)\cdot w(x,y)\) for some \(w\in\Cinf(X\times X)\).  We need \(w(x,x)=1\) for all \(x\in X\) in order to get a section for the multiplication map, and we assume that the projection to the first coordinate \((x,y)\mapsto x\) restricts to a proper map on the support of~\(w\).  That is, for each compact subset \(K\subseteq X\) there is a compact subset \(L\subseteq X\times X\) such that \(w(x,y)=0\) for all \(x\in K\) with \((x,y)\notin L\).  This ensures that \(\sigma_\leftsub f\) is supported in~\(L\) if~\(f\) is supported in~\(K\).  It is routine to check that such a function~\(w\) exists and that the resulting map~\(\sigma_\leftsub\) has the required properties.
\end{proof}

Lemma~\ref{lem:Ccinf_projective} implies that \(\Ccinf(X)\) is H\nb-unital, see the proof of Corollary~\ref{cor:Flat_H-unital} below.

Viewing \(\Ccinf(X)\)-bimodules as \(\Ccinf(X\times X)\)-modules, the above carries over to bimodules without change.  Now the support restriction functor \(M\mapsto M_\cpt\) will require compact support with respect to both module structures.  Since \(\Cinf(X)_\cpt=\Ccinf(X)\), Proposition~\ref{pro:compact_support_functor_exact} implies that the functor \(P\mapsto P_\cpt\) maps any projective \(\Cinf(X)\)-bimodule resolution of \(\Cinf(X)\) to a projective \(\Ccinf(X)\)-bimodule resolution of \(\Ccinf(X)\).

It is easy to check that the commutator quotient functor for bimodules intertwines the support restriction functors:
\[
\bigl(M \mathbin/ [\Cinf(X),M]\bigr)_\cpt \cong
M_\cpt \bigm/ [\Ccinf(X),M_\cpt].
\]
As a consequence, \(\HHchain_*\bigl(\Ccinf(X)\bigr)\) is chain homotopy equivalent to \(\HHchain_*\bigl(\Cinf(X)\bigr)_\cpt\), where the support restriction is with respect to the canonical \(\Cinf(X)\)-module structure on \(\HHchain_*\bigl(\Cinf(X)\bigr)_\cpt\).  Theorem~\ref{the:HH_Cinf} now implies:

\begin{theorem}
  \label{the:HH_Ccinf}
  Let~\(X\) be a smooth manifold.  The anti-symmetrisation map
  \begin{multline*}
    j\colon \Omega_\cpt^k(X) \to \Ccinf(X)^{\otimes k+1},\\
    f_0 \,\diff f_1\wedge\dotsb\wedge \diff f_k \mapsto
    \sum_{\sigma\in \Sym{k}} (-1)^{\abs{\sigma}}
    f_0 \otimes f_{\sigma(1)}\otimes\dotsb\otimes f_{\sigma(k)}
  \end{multline*}
  and the map
  \[
  k\colon \Ccinf(X)^{\otimes k+1} \to  \Omega_\cpt^k(X),\\
  f_0 \otimes f_1\otimes\dotsb\otimes f_k \mapsto
  \frac{1}{k!}f_0 \,\diff f_1\wedge\dotsb\wedge \diff f_k
  \]
  are \(\Ccinf(X)\)-linear bounded chain maps between \(\Omega_\cpt^*(X)\) with the zero boundary map and \(\HHchain_*\bigl(\Ccinf(X)\bigr) \defeq (\Ccinf(X)^{\otimes n+1},b)\) that are inverse to each other up to \(\Ccinf(X)\)-linear bounded chain homotopy.  More precisely, \(k\circ j=\Id_{\Omega_\cpt^*(X)}\) and \(j\circ k = [b,h]\) for an \(\Ccinf(X)\)-linear bounded map~\(h\) on~\(\HHchain_*\bigl(\Ccinf(X)\bigr)\) of degree~\(1\).
\end{theorem}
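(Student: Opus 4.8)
The plan is to obtain this by applying the support restriction functor $M\mapsto M_\cpt$ to the chain homotopy equivalence of Theorem~\ref{the:HH_Cinf}. The first, routine, step is to check that the two displayed formulas define $\Ccinf(X)$\nb-linear bounded maps with the stated domains and codomains. Here one uses that every element of $\Omega_\cpt^k(X)$ is a topological sum of forms $f_0\,\diff f_1\wedge\dotsb\wedge\diff f_k$ in which \emph{all} the $f_j$ lie in $\Ccinf(X)$: given such a form with $f_0\in\Ccinf(X)$ but $f_1,\dotsc,f_k\in\Cinf(X)$, choose $\chi\in\Ccinf(X)$ equal to~$1$ on a neighbourhood of the support of~$f_0$ and replace each $f_j$ by $\chi f_j$, which does not change the form. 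Consequently $j$ takes values in $\Ccinf(X)^{\otimes k+1}\cong\Ccinf(X^{k+1})$, and $k$ clearly takes values in $\Omega_\cpt^k(X)$; both are bounded. That $j$ and $k$ are chain maps and that $k\circ j=\Id_{\Omega_\cpt^*(X)}$ follows exactly as in Theorem~\ref{the:HH_Cinf}, from the commutativity of $\Ccinf(X)$, the Leibniz rule in $\Omega_\cpt^*(X)$, and the identity $\sum_{\sigma\in\Sym{k}}\diff f_{\sigma(1)}\wedge\dotsb\wedge\diff f_{\sigma(k)}=k!\,\diff f_1\wedge\dotsb\wedge\diff f_k$.

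It then remains to produce the $\Ccinf(X)$\nb-linear bounded degree-$1$ map~$h$ with $j\circ k=[b,h]$. Since $M\mapsto M_\cpt$ is an additive exact functor on $\Cinf(X)$\nb-modules (Proposition~\ref{pro:compact_support_functor_exact}), it carries the $\Cinf(X)$\nb-linear homotopy equivalence $(\Omega^*(X),0)\rightleftarrows\HHchain_*\bigl(\Cinf(X)\bigr)$ of Theorem~\ref{the:HH_Cinf} to a $\Ccinf(X)$\nb-linear bounded homotopy equivalence between $(\Omega_\cpt^*(X),0)$ and $\HHchain_*\bigl(\Cinf(X)\bigr)_\cpt$, realised by the (co)restrictions of $j$, $k$ and the given homotopy. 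Composing this with the $\Ccinf(X)$\nb-linear chain homotopy equivalence $\HHchain_*\bigl(\Ccinf(X)\bigr)\simeq\HHchain_*\bigl(\Cinf(X)\bigr)_\cpt$ set up just before the theorem — obtained by applying $P\mapsto P_\cpt$ to a projective $\Cinf(X)$\nb-bimodule resolution of $\Cinf(X)$, using the commutator-quotient intertwining, and comparing with the bar resolution of $\Ccinf(X)$, which is a genuine projective bimodule resolution because $\Ccinf(X)$ is H\nb-unital by Lemma~\ref{lem:Ccinf_projective} — produces a $\Ccinf(X)$\nb-linear homotopy equivalence between $(\Omega_\cpt^*(X),0)$ and $\HHchain_*\bigl(\Ccinf(X)\bigr)$. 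Unwinding the identifications shows that this equivalence is carried by the concrete maps $j$ and $k$, and since $k\circ j=\Id$ holds on the nose, one reads off the asserted relation $j\circ k=[b,h]$ with a suitable such~$h$.

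I expect the main obstacle to be making this last identification honest — specifically, arranging for~$h$ to take values in $\HHchain_*\bigl(\Ccinf(X)\bigr)$. The two complexes involved are genuinely different: $\HHchain_n\bigl(\Ccinf(X)\bigr)=\Ccinf(X^{n+1})$ consists of functions with compact support in \emph{all} variables, whereas $\HHchain_n\bigl(\Cinf(X)\bigr)_\cpt$ consists of functions compactly supported only in the single variable on which the canonical $\Cinf(X)$\nb-module structure acts. Hence the homotopy of Theorem~\ref{the:HH_Cinf}, restricted to $\Ccinf(X^{n+1})$, need not take values there — only the composite $bh+hb$ does. The remedy is to note that the evident inclusion of the bar resolution of $\Ccinf(X)$ into the support restriction of the bar resolution of $\Cinf(X)$ is a $\Ccinf(X)$\nb-bimodule chain map over $\Id_{\Ccinf(X)}$ between projective bimodule resolutions, hence a chain homotopy equivalence; its commutator quotient is the inclusion $\HHchain_*\bigl(\Ccinf(X)\bigr)\hookrightarrow\HHchain_*\bigl(\Cinf(X)\bigr)_\cpt$, a $\Ccinf(X)$\nb-linear chain homotopy equivalence, along which one transports the homotopy back to obtain the required~$h$ on $\HHchain_*\bigl(\Ccinf(X)\bigr)$.
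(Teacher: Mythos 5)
Your proposal is correct and follows essentially the same route as the paper, which deduces the theorem from Theorem~\ref{the:HH_Cinf} by applying the support restriction functor and the chain homotopy equivalence \(\HHchain_*\bigl(\Ccinf(X)\bigr)\simeq\HHchain_*\bigl(\Cinf(X)\bigr)_\cpt\) established in Section~\ref{sec:smooth_compact_support}. Your final paragraph correctly identifies and resolves the one subtlety (compact support in a single variable versus all variables) that the paper leaves implicit.
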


Here \(\Omega_\cpt^k(X) \defeq \Omega^k(X)_\cpt\) is the space of compactly supported smooth \(k\)\nb-forms with its canonical bornology.

\section{Application to Whitney functions}
\label{sec:Whitney}

As in the previous section, we work in the symmetric monoidal category~\(\Frech\) of Fr\'echet spaces.

Let~\(X\) be a smooth manifold --~for instance, an open subset of~\(\R^n\)~-- and let~\(Y\) be a closed subset of~\(X\).  A smooth function on~\(X\) is called \emph{flat} on~\(Y\) if its Taylor series vanishes at each point of~\(Y\), that is, \((Df)(y)=0\) for any \(y\in Y\) and any differential operator~\(D\), including operators of order zero.  The smooth functions that are flat on~\(Y\) form a closed ideal~\(\Flat{Y}{X}\) in~\(\Cinf(X)\).  The quotient
\[
\Whitney(Y) \defeq \Cinf(X) \bigm/ \Flat{Y}{X}
\]
is, by definition, the Fr\'echet algebra of \emph{Whitney functions} on~\(Y\).  This algebra depends on the embedding of~\(Y\) in~\(X\).

\begin{example}
  \label{exa:Whitney_point}
  If \(Y\subseteq X\) consists of a single point, then \(\Whitney(Y)\) is isomorphic to the Fr\'echet algebra \(\C[\![x_1,\dotsc,x_n]\!]\) of formal power series in \(n\)~variables, where~\(n\) is the dimension of~\(X\).
\end{example}

By definition, the Fr\'echet algebra of Whitney functions fits into an extension
\[
\Flat{Y}{X} \mono \Cinf(X) \epi \Whitney(Y),
\]
as in~\eqref{eq:Whitney_extension}.  It is well-known that \(\Cinf(X)\) is nuclear, see~\cite{Grothendieck:Produits}.  Since nuclearity is inherited by subspaces and quotients, \(\Flat{Y}{X}\) and \(\Whitney(Y)\) are nuclear as well.  Theorem~\ref{the:nuclear_Frechet_locally_split_extension} shows, therefore, that the extension~\eqref{eq:Whitney_extension} is both ind-locally split and pro-locally split, and this remains so if we tensor first with another Fr\'echet space~\(V\).  As a consequence, \eqref{eq:Whitney_extension} is a pure extension.

Our next goal is to show that \(\Flat{Y}{X}\) is homologically unital.  This requires computing some complete projective topological tensor products and finding a smooth function with certain properties.

\begin{lemma}
  \label{lem:tensor_Flat}
  There are isomorphisms
  \begin{align*}
    \Cinf(X)\prot\Cinf(X) &\cong \Cinf(X\times X),\\
    \Flat{Y}{X}\prot\Flat{Y}{X} &\cong \Flat{Y\times X\cup X\times Y}{X\times X}.
  \end{align*}
\end{lemma}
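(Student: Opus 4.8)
The plan is to derive both isomorphisms from Grothendieck's theorem together with the purity of the Whitney extension. The first isomorphism \(\Cinf(X)\prot\Cinf(X)\cong\Cinf(X\times X)\) is classical and holds because \(\Cinf(X)\) is nuclear~\cite{Grothendieck:Produits}; I would simply invoke it. For the second, the idea is to realise \(\Flat{Y}{X}\prot\Flat{Y}{X}\) as an explicit closed subspace of \(\Cinf(X)\prot\Cinf(X)=\Cinf(X\times X)\). The extension \(\Flat{Y}{X}\mono\Cinf(X)\epi\Whitney(Y)\) is pure and remains so after tensoring with any Fr\'echet space, as recorded just before the lemma. Tensoring it first with \(\Flat{Y}{X}\) and then, on the other factor, with \(\Cinf(X)\) exhibits \(\Flat{Y}{X}\prot\Flat{Y}{X}\to\Cinf(X)\prot\Cinf(X)\) as a composite of two inflations of Fr\'echet spaces, hence as a homeomorphism onto a closed subspace. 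Writing \(f\otimes g\) for the function \((x,y)\mapsto f(x)g(y)\), the linear span \(S_0\) of all \(f\otimes g\) with \(f,g\in\Flat{Y}{X}\) is dense in \(\Flat{Y}{X}\prot\Flat{Y}{X}\) by construction of the projective tensor product, so this closed subspace is exactly the closure \(\overline{S_0}\) taken inside \(\Cinf(X\times X)\). It remains to prove \(\overline{S_0}=\Flat{Y\times X\cup X\times Y}{X\times X}\).

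One inclusion is easy: every generator \(f\otimes g\) of \(S_0\) has all its derivatives vanishing along \(Y\times X\) (since \(f\) is flat on \(Y\)) and along \(X\times Y\) (since \(g\) is flat on \(Y\)), and \(\Flat{Y\times X\cup X\times Y}{X\times X}\) is a closed ideal of \(\Cinf(X\times X)\), so \(\overline{S_0}\subseteq\Flat{Y\times X\cup X\times Y}{X\times X}\). For the reverse inclusion -- the density statement, which is the crux -- I would pass to duals. The topological dual of \(\Cinf(X\times X)\) is the space \(\mathcal E'(X\times X)\) of compactly supported distributions, and by the bipolar theorem \(\overline{S_0}={}^\perp(S_0^\perp)\); so it suffices to identify the annihilator \(S_0^\perp\subseteq\mathcal E'(X\times X)\). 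I claim \(S_0^\perp\) is the set of distributions supported in \(Y\times X\cup X\times Y\). Indeed, a compactly supported distribution has finite order and hence annihilates every smooth function all of whose derivatives vanish on its support; so any distribution supported in \(Y\times X\cup X\times Y\) kills \(S_0\). Conversely, if \(T\in S_0^\perp\) and \((x_0,y_0)\notin Y\times X\cup X\times Y\), then \(x_0\notin Y\) and \(y_0\notin Y\), so there are \(\chi,\psi\in\Ccinf(X)\) with \(\chi(x_0)\neq0\), \(\psi(y_0)\neq0\) and supports disjoint from \(Y\); then \(\chi,\psi\in\Flat{Y}{X}\) because they vanish near \(Y\). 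Multiplication by \(\chi\otimes\psi\) maps \(\Cinf(X\times X)\) into \(\overline{S_0}\): approximating \(h\in\Cinf(X\times X)\) by finite sums \(\sum_i a_i\otimes b_i\) and using that \(\Flat{Y}{X}\) is an ideal, \((\chi\otimes\psi)\cdot h\) is a limit of elements \(\sum_i(\chi a_i)\otimes(\psi b_i)\) of \(S_0\). Hence \(T\) vanishes on all these products, i.e.\ \((\chi\otimes\psi)\,T=0\), so \((x_0,y_0)\) is not in the support of \(T\). This proves the claim. Finally, \({}^\perp(S_0^\perp)\) is the set of smooth functions on \(X\times X\) annihilated by every distribution supported in \(Y\times X\cup X\times Y\), in particular by all derivatives of point masses at such points; this is precisely \(\Flat{Y\times X\cup X\times Y}{X\times X}\). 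The bipolar theorem then yields \(\overline{S_0}=\Flat{Y\times X\cup X\times Y}{X\times X}\).

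The main obstacle is this density \(\overline{S_0}=\Flat{Y\times X\cup X\times Y}{X\times X}\). A hands-on proof would require approximating an arbitrary function flat on \(Y\times X\cup X\times Y\) by sums of products of functions flat on \(Y\), which seems to call for delicate partition-of-unity constructions near the possibly very singular set \(Y\); routing the statement through Hahn--Banach reduces the essential point to the elementary fact that compactly supported distributions have finite order. The remaining ingredients -- that flatness and the support statements are local and chart-independent, that inflations of Fr\'echet spaces have closed range and compose, and that \(\Cinf(X)'=\mathcal E'(X)\) -- are standard. The same argument, with \(\Cinf(X)\) in place of one factor, also gives \(\Flat{Y}{X}\prot\Cinf(X)\cong\Flat{Y\times X}{X\times X}\), and iterating it identifies the completed tensor powers \(\Flat{Y}{X}^{\prot n}\) with ideals of flat functions, as will be needed to establish the H\nb-unitality of \(\Flat{Y}{X}\).
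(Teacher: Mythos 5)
Your proof is correct, but it takes a genuinely different route from the paper's at the decisive step. The paper embeds all three completed tensor products \(\Cinf(X)\prot\Flat{Y}{X}\), \(\Flat{Y}{X}\prot\Cinf(X)\) and \(\Flat{Y}{X}\prot\Flat{Y}{X}\) as closed subspaces of \(\Cinf(X\times X)\) using nuclearity, invokes the intersection formula \(\Flat{Y}{X}\prot\Flat{Y}{X}=\bigl(\Cinf(X)\prot\Flat{Y}{X}\bigr)\cap\bigl(\Flat{Y}{X}\prot\Cinf(X)\bigr)\), and identifies each one-sided product with the ideal of functions flat on \(X\times Y\) resp.\ \(Y\times X\) by \emph{asserting} the relevant density of elementary tensors; the intersection of these two ideals is then visibly \(\Flat{Y\times X\cup X\times Y}{X\times X}\). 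You instead obtain the closed embedding of \(\Flat{Y}{X}\prot\Flat{Y}{X}\) from purity of the Whitney extension (which is legitimate, since purity is established just before the lemma) and then actually \emph{prove} the density statement for the two-sided ideal by duality: the annihilator of the span of elementary tensors consists exactly of the compactly supported distributions supported in \(Y\times X\cup X\times Y\) (your localisation argument with \(\chi\otimes\psi\) is sound), and the bipolar theorem together with the finite order of compactly supported distributions identifies the pre-annihilator of that set with \(\Flat{Y\times X\cup X\times Y}{X\times X}\). What each approach buys: the paper's is shorter but leans on two unproved functional-analytic facts (the intersection formula for nuclear tensor products and the density claim), whereas your argument supplies a complete proof of the density -- which is the real content of the lemma -- and your duality argument would equally well establish the paper's one-sided density claims \(\Cinf(X)\prot\Flat{Y}{X}\cong\Flat{X\times Y}{X\times X}\) as a byproduct. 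The only cosmetic remark is that the ideal property of \(\Flat{Y\times X\cup X\times Y}{X\times X}\) is irrelevant for the easy inclusion; closedness is all you use there.
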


\begin{proof}
  It is well-known that \(\Cinf(X)\prot\Cinf(X) \cong \Cinf(X\times X)\), see~\cite{Grothendieck:Produits}.  Since all spaces involved are nuclear, \(\Cinf(X)\prot\Flat{Y}{X}\), \(\Flat{Y}{X}\prot\Cinf(X)\), and \(\Flat{Y}{X}\prot\Flat{Y}{X}\) are subspaces of \(\Cinf(X\times X)\) --~they are the closures of the corresponding algebraic tensor products in \(\Cinf(X\times X)\)~-- and
  \begin{equation}
    \label{eq:Flat_tensor_Flat}
    \Flat{Y}{X}\prot\Flat{Y}{X} =
    \Cinf(X)\prot\Flat{Y}{X} \cap
    \Flat{Y}{X}\prot\Cinf(X)
  \end{equation}
  as subspaces of \(\Cinf(X\times X)\).

  Since functions of the form \(f_1\otimes f_2\) with \(f_1\in\Flat{Y}{X}\), \(f_2\in\Cinf(X)\) span a dense subspace of \(\Flat{Y\times X}{X\times X}\), we get
  \[
  \Cinf(X)\prot\Flat{Y}{X} \cong \Flat{X\times Y}{ X\times X}.
  \]
  Similarly,
  \[
  \Flat{Y}{X}\prot\Cinf(X) \cong \Flat{Y\times X}{ X\times X}.
  \]
  Now~\eqref{eq:Flat_tensor_Flat} shows that a smooth function on \(X\times X\) belongs to \(\Flat{Y}{X}\prot\Flat{Y}{X}\) if and only if it is flat on both \(Y\times X\) and \(X\times Y\).
\end{proof}

\begin{proposition}
  \label{pro:Flat_quasi-unital}
  There are continuous linear sections
  \[
  \sigma_\leftsub,\sigma_\rightsub\colon \Flat{Y}{X} \to \Flat{Y}{X} \prot \Flat{Y}{X}
  \]
  for the multiplication map \(\mu\colon \Flat{Y}{X}\prot\Flat{Y}{X}\to \Flat{Y}{X}\), such that~\(\sigma_\leftsub\) is a left \(\Cinf(X)\)-module map and~\(\sigma_\rightsub\) is a right \(\Cinf(X)\)-module map.
\end{proposition}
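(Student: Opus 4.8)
The plan is to exhibit $\sigma_\leftsub$ and $\sigma_\rightsub$ explicitly, as multiplication by a ``flattened'' near-diagonal cut-off, in the spirit of Lemma~\ref{lem:Ccinf_projective} but coping with the extra flatness constraint. By Lemma~\ref{lem:tensor_Flat} I may identify $\Flat{Y}{X}\prot\Flat{Y}{X}$ with $\Flat{Y\times X\cup X\times Y}{X\times X}$; under this identification $\mu$ becomes restriction to the diagonal $\Delta\subseteq X\times X$ (it does land in $\Flat{Y}{X}$ because such functions are flat on $Y\times X\ni(b,b)$), the left $\Cinf(X)$-action on the target is multiplication in the first variable, and the right one is multiplication in the second. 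For $\Cinf(X)$ itself a section of the corresponding multiplication map is $f\mapsto\bigl((x,y)\mapsto f(x)\theta(x,y)\bigr)$ with $\theta\in\Cinf(X\times X)$ equal to $1$ near $\Delta$ and properly supported; its only defect here is that $f(x)\theta(x,y)$ need not be flat on $X\times Y$ near $\Delta\cap(Y\times Y)$. So the task is to replace $\theta$ by a function $w$, smooth on $(X\setminus Y)\times X$, equal to $1$ on the diagonal over $X\setminus Y$, but vanishing flatly as soon as the second variable approaches $Y$ while the first stays away from it --- tolerating that $w$ itself will not extend continuously across $Y\times Y$.

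To build such a $w$ I would first manufacture a smooth proxy for the distance to $Y$. A Whitney decomposition of the open submanifold $X\setminus Y$ (into coordinate cubes of diameter comparable to their distance to $Y$, with a subordinate partition of unity obeying the usual derivative bounds) produces a positive $\rho\in\Cinf(X\setminus Y)$ with $\rho(x)\asymp d(x,Y)$ near $Y$ and $\lvert\partial^\alpha\rho(x)\rvert\lesssim d(x,Y)^{1-\lvert\alpha\rvert}$ on compacta. Fix a smooth symmetric $D\colon X\times X\to[0,\infty)$ vanishing exactly on $\Delta$ and comparable there to the squared geodesic distance, and a compactly supported smooth $\eta\colon[0,\infty)\to[0,1]$ with $\eta\equiv1$ near $0$ and $\operatorname{supp}\eta\subseteq[0,t_0)$; after replacing $\rho$ by a small multiple we may assume $D(x,y)>t_0\rho(x)^2$ whenever $x\notin Y$ and $y\in Y$. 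Set $w(x,y):=\eta\bigl(D(x,y)/\rho(x)^2\bigr)$. Then $w(x,x)=1$ for $x\notin Y$; $w$ vanishes on an open subset of $(X\setminus Y)\times X$ containing $(X\setminus Y)\times Y$, hence is flat there; and the chain rule together with the bounds on $\rho$ and $D$ gives $\lvert\partial^\alpha w(x,y)\rvert\lesssim d(x,Y)^{-c\lvert\alpha\rvert}$ on compacta for a fixed constant $c$.

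Finally I would put $\sigma_\leftsub(f)(x,y):=f(x)\,w(x,y)$ (and $:=0$ for $x\in Y$) and $\sigma_\rightsub(f)(x,y):=f(y)\,w(y,x)$, and check the assertions. The one substantial point is that $\sigma_\leftsub(f)$ really is a smooth function on $X\times X$ lying in $\Flat{Y\times X\cup X\times Y}{X\times X}$ and that $f\mapsto\sigma_\leftsub(f)$ is continuous; here the decisive input is the local Taylor estimate $\lvert\partial^\gamma f(x)\rvert\le C_N\,\lVert f\rVert_{\Cinf[N](L)}\,d(x,Y)^{N-\lvert\gamma\rvert}$, valid for every $N$ because $f$ is flat on $Y$ (Taylor's formula at the nearest point of $Y$). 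Since this beats every power of $1/d(\cdot,Y)$, each term $f^{(\gamma)}(x)\,\partial^{\alpha-\gamma}w(x,y)$ of $\partial^\alpha(\sigma_\leftsub f)$ is dominated on compacta by a fixed continuous seminorm of $f$ and tends to $0$ as $x\to Y$; thus $\sigma_\leftsub(f)$ extends smoothly across $Y\times X$, is flat on $Y\times X\cup X\times Y$, and $\sigma_\leftsub$ is continuous. The identity $\mu\circ\sigma_\leftsub=\Id$ holds because $w$ is identically $1$ in a neighbourhood of the diagonal over $X\setminus Y$ while both sides vanish over $Y$; the left $\Cinf(X)$-linearity $\sigma_\leftsub(gf)(x,y)=g(x)\,\sigma_\leftsub(f)(x,y)$ is immediate; and $\sigma_\rightsub$ is completely symmetric. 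I expect the construction and estimates for $w$ to be the main obstacle: one must notice that no \emph{continuous} $w$ can be $1$ on $\Delta$ and flat on $(X\setminus Y)\times Y$ simultaneously (it would be forced to vanish on $\Delta$ over $\partial Y$), so the right $w$ has only polynomial blow-up in $1/d(\cdot,Y)$ near $Y\times Y$ --- which is exactly what the flatness of $f$ absorbs.
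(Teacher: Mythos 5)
Your proposal is correct and follows the same architecture as the paper's proof: identify $\Flat{Y}{X}\prot\Flat{Y}{X}$ with $\Flat{Y\times X\cup X\times Y}{X\times X}$ via Lemma~\ref{lem:tensor_Flat}, make the Ansatz $\sigma_\leftsub f(x,y)=f(x)\,w(x,y)$, reduce $\sigma_\rightsub$ to $\sigma_\leftsub$ by swapping variables, and let the flatness of $f$ on $Y$ absorb the polynomial blow-up of the derivatives of $w$ near $Y\times Y$. The only point where you diverge is the production of $w$. The paper observes that the diagonal $A=\Delta$ and $B=X\times Y$ are regularly situated, checking $d(\cdot,A\cap B)\le C\bigl(d(\cdot,A)+d(\cdot,B)\bigr)$ directly, and then invokes Tougeron's Lemma~4.5 to obtain a multiplier of $\Flat{A\cap B}{X\times X}$ equal to $1$ near $A\setminus(A\cap B)$ and to $0$ near $B\setminus(A\cap B)$; you instead build $w$ by hand from a Whitney regularized distance $\rho\asymp d(\cdot,Y)$ and a rescaled cut-off $\eta\bigl(D(x,y)/\rho(x)^2\bigr)$. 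Your route is more self-contained and makes the unavoidable polynomial singularity of $w$ completely explicit (your closing remark that no continuous such $w$ can exist is exactly the right observation), at the cost of some bookkeeping that the citation sidesteps; in particular, the normalization ``$D(x,y)>t_0\rho(x)^2$ for $y\in Y$'' as stated can fail for $x$ far from $Y$ if $D$ is only comparable to the squared distance near $\Delta$, so you should either take $D$ to be an honest squared distance in an ambient embedding or cap $\rho$ by a small constant. Since all the seminorms involved are suprema over compacta, this is a routine fix and does not affect the argument.
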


\begin{proof}
  Let \(X_2\defeq X\times X\) and \(Y_2\defeq Y\times X\cup X\times Y\subseteq X_2\).  By Lemma~\ref{lem:tensor_Flat}, \(\sigma_\leftsub\) and~\(\sigma_\rightsub\) are supposed to be maps from \(\Flat{Y}{X}\) to \(\Flat{Y_2}{X_2}\).  The multiplication map corresponds to the map \(\mu\colon \Flat{Y_2}{X_2}\to \Flat{Y}{X}\) that restricts functions to the diagonal, \((\mu f)(x)\defeq f(x,x)\).  Once we have found~\(\sigma_\leftsub\), we may put \(\sigma_\rightsub f(x_1,x_2) \defeq \sigma_\leftsub f(x_2,x_1)\), so that it suffices to construct~\(\sigma_\leftsub\).

  Let \(A\subseteq X_2\) be the diagonal and let \(B\defeq X\times Y\).  Then \(A\cap B\) is the diagonal image of~\(Y\) in~\(X_2\).  We claim that \(A\) and~\(B\) are regularly situated (see~\cite{Tougeron:Ideaux}); even more, \(d(x,A\cap B) \le C\cdot(d(x,A)+d(x,B)\bigr)\) for some constant~\(C\).  The distance from \((x_1,x_2)\) to the diagonal~\(A\) is \(d(x_1,x_2)\); the distance to~\(B\) is \(d(x_2,Y)\); and the distance to \(A\cap B\) is at most
  \[
  \inf_{y\in Y} d(x_1,y)+d(x_2,y)
  \le \inf_{y\in Y} d(x_1,x_2)+d(x_2,y)+d(x_2,y)
  = d(x_1,x_2) + 2d(x_2,Y).
  \]

  Since \(A\) and~\(B\) are regularly situated, \cite{Tougeron:Ideaux}*{Lemma 4.5} yields a multiplier~\(w\) of \(\Flat{A\cap B}{X_2}\) that is constant equal to one in a neighbourhood of \(A\setminus (A\cap B)\) and constant equal to zero in a neighbourhood of \(B\setminus (A\cap B)\).\footnote{I thank Markus Pflaum for this construction of~\(w\).}  Being a multiplier means that~\(w\) is a smooth function on \(X_2\setminus (A\cap B)\) whose derivatives grow at most polynomially near \(A\cap B\), that is, for each compactly supported differential operator~\(D\) there is a polynomial~\(p_D\) with \(\abs{Dw(x)} \le p_D(d(x, A\cap B)^{-1})\) for all \(x\in X_2\setminus (A\cap B)\).  Now define
  \[
  \sigma_\leftsub f(x_1,x_2) \defeq f(x_1)\cdot w(x_1,x_2).
  \]
  The function~\(\sigma_\leftsub f\) is a smooth function on \(X_2\setminus (A\cap B)\).  Since \(f\otimes 1\) is flat on \(Y\times X \supseteq A\cap B\) and~\(w\) is a multiplier of \(\Flat{A\cap B}{X_2}\), its extension by~\(0\) on \(A\cap B\), also denoted by~\(\sigma_\leftsub f\), is a smooth function on~\(X_2\) that is flat on \(Y\times X\).  Furthermore, the extension is flat on \((X\setminus Y)\times Y\) because~\(w\) is, so that \(\sigma_\leftsub f\in \Flat{Y_2}{X_2}\).  We also have \(\sigma_\leftsub f(x,x) = f(x)\) both for \(x\in X\setminus Y\) and \(x\in Y\).  Thus~\(\sigma_\leftsub\) has the required properties.
\end{proof}

It can be checked that \(\Flat{Y}{X}\) has a multiplier bounded approximate unit as well; hence it is quasi-unital in the notation of~\cite{Meyer:Embed_derived}.  We do not need this stronger fact here.

\begin{corollary}
  \label{cor:Flat_H-unital}
  Let \(I\defeq \Flat{Y}{X}\), then the chain complex \((I^{\otimes n},b')\) has a bounded contracting homotopy, that is, \(I\) is homologically unital for any exact category structure on the category of Fr\'echet spaces.
\end{corollary}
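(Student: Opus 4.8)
The plan is to convert the right $\Cinf(X)$-module section~$\sigma_\rightsub$ from Proposition~\ref{pro:Flat_quasi-unital} into an explicit contracting homotopy for the bar complex $(I^{\otimes n},b')_{n\ge1}$, exactly as one uses a unit element when the algebra is unital.  Write $\sigma\defeq\sigma_\rightsub\colon I\to I\prot I$: it is a continuous linear map with $\mu\circ\sigma=\Id_I$, and, being a right $\Cinf(X)$-module homomorphism, it satisfies $\sigma(x_1x_2)=\sigma(x_1)\cdot x_2$ for all $x_1,x_2\in I$, where the right action on $I\prot I$ is multiplication on the second tensor factor.

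First I would set $h_n\defeq\sigma\otimes\Id_{I^{\otimes n-1}}\colon I^{\otimes n}\to I^{\otimes n+1}$ for $n\ge1$; informally, $h(x_1\otimes\dotsb\otimes x_n)=\sigma(x_1)\otimes x_2\otimes\dotsb\otimes x_n$.  These maps are continuous, hence bounded.  Next I would verify the chain homotopy identity $b'\circ h+h\circ b'=\Id$ on each $I^{\otimes n}$ by expanding the two composites.  The leading summand of $b'\circ h$ multiplies the two tensor factors produced by~$\sigma$ and, since $\mu\circ\sigma=\Id_I$, returns $x_1\otimes\dotsb\otimes x_n$; the summand of $b'\circ h$ that multiplies the second factor of $\sigma(x_1)$ into~$x_2$ cancels the $j=1$ summand of $h\circ b'$ precisely because $\sigma(x_1x_2)=\sigma(x_1)\cdot x_2$; and the remaining summands of $b'\circ h$ and $h\circ b'$ cancel in pairs by the usual telescoping of bar differentials.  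On $I^{\otimes 1}=I$ the identity reduces to $\mu\circ\sigma=\Id_I$, which holds.  This is a short, purely formal computation; the only points needing attention are the sign bookkeeping and the observation that it is the \emph{right}-module property of~$\sigma_\rightsub$ (not~$\sigma_\leftsub$) that kills the exceptional term.  I do not expect any real obstacle here, the substantive work having been done in Proposition~\ref{pro:Flat_quasi-unital}.

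Finally, a continuous (hence bounded) chain contraction shows that $(I^{\otimes n},b')_{n\ge1}$ is contractible in~$\Frech$.  Applying the additive functor $\blank\prot V$ to the contraction keeps the complex contractible, hence exact, for every Fr\'echet space~$V$; therefore the complex is pure exact, and in fact $\Exten$-exact for \emph{every} exact category structure~$\Exten$ on~$\Frech$, since a contractible complex has vanishing refined homology in the ambient Abelian category.  Thus $I=\Flat{Y}{X}$ is homologically unital in the required strong sense.  Exactly the same argument, with the module sections of Lemma~\ref{lem:Ccinf_projective} in place of~$\sigma_\rightsub$, establishes the H\nb-unitality of~$\Ccinf(X)$ asserted after Lemma~\ref{lem:Ccinf_projective}.
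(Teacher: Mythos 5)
Your proposal is correct and follows essentially the same route as the paper: the contracting homotopy $s_n = \sigma_\rightsub\otimes\Id_{I^{\otimes n-1}}$ built from the right-module section of Proposition~\ref{pro:Flat_quasi-unital}, the cancellation of the exceptional term via the right-module property, and the observation that contractibility survives $\blank\prot V$ and implies exactness for any exact category structure. You merely spell out the telescoping verification that the paper compresses into one sentence.
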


\begin{proof}
  The maps \(s_n\defeq \sigma_\rightsub\otimes\Id_{I^{\otimes n-1}} \colon I^{\otimes n} \to I^{\otimes n+1}\) for \(n\ge1\) satisfy \(b' s_1 = \Id_I\) and \(b' s_n + s_{n-1} b' = \Id_{I^{\otimes n}}\) for \(n\ge2\) because~\(\sigma_\rightsub\) is a right module map and a section for the multiplication map.  Hence \((I^{\otimes n},b')\) is contractible.  So is \((I^{\otimes n},b')\otimes V\) for any Fr\'echet space~\(V\).  Contractible chain complexes are exact.
\end{proof}

Having checked that the extension~\eqref{eq:Whitney_extension} is pure and that its kernel is homologically unital, we are in a position to apply Theorem~\ref{the:HH_excision}.  Since \(\Unit=\C\) is both projective and injective as a Fr\'echet space, we get long exact sequences in Hochschild homology and cohomology for the algebra extension~\eqref{eq:Whitney_extension}.  We have already computed the Hochschild homology and cohomology of \(\Cinf(X)\) in the previous section.  Our next task is to compute them for \(\Flat{Y}{X}\), together with the maps on Hochschild homology and cohomology induced by the embedding \(\Flat{Y}{X}\to\Cinf(X)\).

The following computations also use the balanced tensor product \(M\otimes_A N\) for a right \(A\)\nb-module~\(M\) and a left \(A\)\nb-module~\(N\).  This is defined --~in the abstract setting of symmetric monoidal categories with cokernels~-- as the cokernel of the map \(b'\colon M\otimes A\otimes N\to M\otimes N\).

\begin{proposition}
  \label{pro:HH_Flat}
  The Hochschild homology \(\HH_k\bigl(\Flat{Y}{X}\bigr)\) is isomorphic to the space \(\Flat[^\infty\Omega^k]{Y}{X}\) of smooth differential \(k\)\nb-forms on~\(X\) that are flat on~\(Y\), and the map to \(\HH_k\bigl(\Cinf(X)\bigr) \cong \Omega^k(X)\) is equivalent to the obvious embedding \(\Flat[^\infty\Omega^k]{Y}{X} \to \Omega^k(X)\).  The Hochschild cohomologies are naturally isomorphic to the topological dual spaces \(\Flat[^\infty\Omega^k]{Y}{X}^*\) and \(\Omega^k(X)^*\).
\end{proposition}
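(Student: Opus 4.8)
The plan is to extend scalars along the inclusion $\iota\colon I\hookrightarrow\Cinf(X)$ of the ideal $I\defeq\Flat{Y}{X}$ and then quote Theorem~\ref{the:HH_Cinf}. Write $E\defeq\Cinf(X)$. We have already seen that the extension~\eqref{eq:Whitney_extension} is pure, and Corollary~\ref{cor:Flat_H-unital} shows that~$I$ is homologically unital. So Corollary~\ref{cor:HH_II_EI} and Theorem~\ref{the:excision_ideal_extension} (for the $E,I$\nb-bimodule $M=I$) provide pure quasi-isomorphisms
\[
\HHchain_*(I) \xleftarrow{\;\sim\;} \HHchain_*(I,I) \xrightarrow{\;\sim\;} \HHchain_*(E,I) = \bigl(I\otimes E^{\otimes n},b\bigr),
\]
and these are compatible with the canonical maps to $\HHchain_*(E)=\HHchain_*(E,E)$ induced by~$\iota$. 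Hence $\HH_*(I)\cong\HH_*(E,I)$ compatibly with the map to $\HH_*\bigl(\Cinf(X)\bigr)$. Since the tensor unit~$\C$ is injective in~$\Frech$, the Hahn--Banach Theorem makes the dual of an exact complex of Fr\'echet spaces again exact, so the above quasi-isomorphisms stay quasi-isomorphisms after applying $\Hom(\blank,\C)$; thus also $\HH^*(I)\cong\HH^*(E,I)$, compatibly with the restriction map from $\HH^*\bigl(\Cinf(X)\bigr)$.

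Next I would rewrite $\HHchain_*(E,I)$ as a balanced tensor product. Because $\Cinf(X)$ is commutative, the Hochschild complex $\HHchain_*\bigl(\Cinf(X)\bigr)=\bigl(\Cinf(X)^{\otimes n+1},b\bigr)$ is a chain complex of $\Cinf(X)$\nb-modules via multiplication into the leftmost tensor factor, and for any symmetric $\Cinf(X)$\nb-bimodule~$M$ there is a natural isomorphism of chain complexes $\HHchain_*\bigl(\Cinf(X),M\bigr)\cong M\otimes_{\Cinf(X)}\HHchain_*\bigl(\Cinf(X)\bigr)$; this is routine, since $\HHchain_n\bigl(\Cinf(X),M\bigr)=M\otimes\Cinf(X)^{\otimes n}\cong M\otimes_{\Cinf(X)}\Cinf(X)^{\otimes n+1}$ because $\Cinf(X)^{\otimes n+1}$ is the free module on $\Cinf(X)^{\otimes n}$, and the differentials match. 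The maps $j$, $k$ and the homotopy~$h$ of Theorem~\ref{the:HH_Cinf} are $\Cinf(X)$\nb-linear, so applying the additive functor $M\mapsto I\otimes_{\Cinf(X)}M$ to the chain homotopy equivalence $\HHchain_*\bigl(\Cinf(X)\bigr)\simeq\bigl(\Omega^*(X),0\bigr)$ yields a chain homotopy equivalence
\[
\HHchain_*(E,I)\cong I\otimes_{\Cinf(X)}\HHchain_*\bigl(\Cinf(X)\bigr)\;\simeq\;\bigl(I\otimes_{\Cinf(X)}\Omega^*(X),\,0\bigr),
\]
and the same functor applied to the inclusion $I\hookrightarrow E$ turns the map to $\HHchain_*(E,E)$ into the inclusion $I\otimes_{\Cinf(X)}\Omega^k(X)\hookrightarrow E\otimes_{\Cinf(X)}\Omega^k(X)=\Omega^k(X)$. (All these balanced tensor products are Fr\'echet spaces, since the modules involved are free or finitely generated projective.)

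It then remains to identify $I\otimes_{\Cinf(X)}\Omega^k(X)$. The space $\Omega^k(X)$ consists of the smooth sections of the vector bundle $\Lambda^k(\Tang^*X)$ over the finite-dimensional manifold~$X$, hence is a finitely generated projective $\Cinf(X)$\nb-module: the bundle is a direct summand of a trivial bundle $X\times\C^N$. Therefore the natural map $I\otimes_{\Cinf(X)}\Omega^k(X)\to\Omega^k(X)$ is a split monomorphism with image the submodule $I\cdot\Omega^k(X)$. Viewing a $k$\nb-form as an $N$\nb-tuple of functions through the embedding $\Lambda^k(\Tang^*X)\hookrightarrow X\times\C^N$, one checks that a $k$\nb-form is flat on~$Y$ exactly when all of its $N$ components lie in~$I$; together with surjectivity of the bundle projection this gives $I\cdot\Omega^k(X)=\Flat[^\infty\Omega^k]{Y}{X}$. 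Combining the three steps yields $\HH_k\bigl(\Flat{Y}{X}\bigr)\cong\Flat[^\infty\Omega^k]{Y}{X}$, with the map to $\HH_k\bigl(\Cinf(X)\bigr)\cong\Omega^k(X)$ equal to the obvious embedding. Dualising the whole chain of quasi-isomorphisms and chain homotopy equivalences (permissible by Hahn--Banach, as above) and using that the dual of $\bigl(\Flat[^\infty\Omega^*]{Y}{X},0\bigr)$ again has zero differential, we obtain $\HH^k\bigl(\Flat{Y}{X}\bigr)\cong\Flat[^\infty\Omega^k]{Y}{X}^*$, with the map from $\HH^k\bigl(\Cinf(X)\bigr)\cong\Omega^k(X)^*$ being the transpose of the embedding.

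The main obstacle is the last step: one has to know that $\Omega^k(X)$ is finitely generated projective over $\Cinf(X)$ --- this is what guarantees that the completed balanced tensor product coincides with the algebraic one, that the canonical map into $\Omega^k(X)$ is injective with closed image, and that this image equals $I\cdot\Omega^k(X)$ --- and that $I\cdot\Omega^k(X)$ is precisely the space of flat $k$\nb-forms. The remaining work is bookkeeping, the one delicate point being to keep track, through the excision isomorphisms and the extension-of-scalars functor, that all identifications are natural in the inclusion $\Flat{Y}{X}\hookrightarrow\Cinf(X)$.
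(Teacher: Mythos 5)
Your proposal is correct and follows essentially the same route as the paper's proof: reduce \(\HHchain_*(I)\) to \(\HHchain_*(E,I)=I\otimes_E\HHchain_*(E)\) via Corollaries \ref{cor:Flat_H-unital} and~\ref{cor:HH_II_EI}, apply the \(\Cinf(X)\)-linear homotopy equivalence of Theorem~\ref{the:HH_Cinf}, identify \(I\otimes_{\Cinf(X)}\Omega^k(X)\) with the flat forms using that \(\Omega^k(X)\) is a direct summand of a free module (Swan), and dualise via Hahn--Banach.  One minor slip that does not affect the argument: the map \(I\otimes_{\Cinf(X)}\Omega^k(X)\to\Omega^k(X)\) is a closed embedding but not a \emph{split} monomorphism in general, since \(\Flat{Y}{X}\) need not be complemented in \(\Cinf(X)\) --- your argument only needs injectivity and closedness of the image, which do hold.
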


\begin{proof}
  Let \(I\defeq \Flat{Y}{X}\) and \(E\defeq \Cinf(X)\).  Theorem~\ref{the:HH_Cinf} shows that the chain complex \(\HHchain_*(E) \defeq (E^{\otimes n},b)_{n\ge1}\) is homotopy equivalent as a chain complex of \(E\)\nb-modules to \(\Omega^*(X)\) with zero as boundary map.  Hence
  \[
  I\otimes_E \HHchain_*(E) \cong (I\otimes E^{\otimes n-1},b)_{n\ge1} = \HHchain_*(E,I)
  \]
  is homotopy equivalent (as a chain complex of \(I\)\nb-modules) to
  \[
  I\otimes_E \Omega^*(X) = \Flat{Y}{X} \otimes_{\Cinf(X)} \Omega^*(X)
  \cong \Flat[^\infty\Omega^*]{Y}{X}.
  \]
  The last step follows from the following more general computation.  If~\(V\) is a smooth vector bundle on~\(X\) and~\(\Gamma^\infty(V)\) is its space of smooth sections, then \(\Flat{Y}{X} \otimes_{\Cinf(X)} \Gamma^\infty(V)\) is the space of smooth sections of~\(V\) that are flat on~\(Y\).  This is easy to see for trivial~\(V\), and Swan's Theorem reduces the general case to this special case.

  Corollaries \ref{cor:Flat_H-unital} and~\ref{cor:HH_II_EI} show that the canonical embedding \(\HHchain_*(I)\to\HHchain_*(E,I)\) is a pure quasi-isomorphism, so that \(\HHchain_*(I)\) is purely quasi-isomorphic to \(I\otimes_E \HHchain_*(E)\).  Hence \(\HH_*(I) \cong \Flat[^\infty\Omega^k]{Y}{X}\) as asserted.  Furthermore, our computation shows that the map \(\HH_*(I)\to\HH_*(E)\) induced by the embedding \(I\to E\) is the obvious embedding \(\Flat[^\infty\Omega^k]{Y}{X} \to \Omega^k(X)\).

  The range of the boundary map of \(\HHchain_*(E)\) is closed.  Thus \(\HH_k(E)\) inherits a Fr\'echet topology -- this is, of course, the standard Fr\'echet space structure on \(\Omega^k(X)\).  Since the dual space functor on Fr\'echet spaces is exact, there is the following universal coefficient theorem: if the boundary map of a chain complex of Fr\'echet spaces has closed range, then the cohomology of the topological dual chain complex is the topological dual of the homology.  Thus \(\HH^k(E) \cong \Omega^k(X)^*\).  Our computation shows that the boundary map of \(\HHchain_*(I)\) has closed range as well, so that \(\HH^k(E) \cong \Flat[^\infty\Omega^k]{Y}{X}^*\).
\end{proof}

Since the embedding \(\Flat[^\infty\Omega^k]{Y}{X}\to \Omega^k(X)\) has
closed range, the quotient space
\[
\Whitney\Omega^k(Y) \defeq \Omega^k(X) \bigm/ \Flat[^\infty\Omega^k]{Y}{X}
\]
is a Fr\'echet space in the quotient topology.  This is the space of \emph{Whitney differential forms}.  Since \(\Omega^k(X)\) is a projective \(\Cinf(X)\)-module and the extension~\eqref{eq:Whitney_extension} is pure, the diagram
\[
\Flat{Y}{X} \otimes_{\Cinf(X)} \Omega^k(X) \to
\Cinf(X) \otimes_{\Cinf(X)} \Omega^k(X) \to
\Whitney(Y) \otimes_{\Cinf(X)} \Omega^k(X)
\]
is an extension of Fr\'echet spaces.  Identifying
\[
\Flat{Y}{X}\otimes_{\Cinf(X)} \Omega^k(X) \cong \Flat[^\infty\Omega^k]{Y}{X},\qquad
\Cinf(X) \otimes_{\Cinf(X)} \Omega^k(X) \cong \Omega^k(X),
\]
we see that
\[
\Whitney\Omega^k(Y) \cong \Whitney(Y) \otimes_{\Cinf(X)} \Omega^k(X).
\]
This provides an alternative definition for the space of Whitney differential forms.  The de Rham boundary map on \(\Omega^*(X)\) maps the subspace \(\Flat[^\infty\Omega^*]{Y}{X}\) into itself and hence induces a de Rham boundary map
\[
0 \to \Whitney\Omega^0(Y) \xrightarrow{\diff} \Whitney\Omega^1(Y) \xrightarrow{\diff} \Whitney\Omega^2(Y) \xrightarrow{\diff}  \Whitney\Omega^3(Y) \to \dotsb
\]
on Whitney differential forms.  The cohomology of this cochain complex is the \emph{de Rham cohomology} \(\Ho_\dR^*(Y)\) of~\(Y\).

\begin{theorem}
  \label{the:HH_HC_HP_Whitney}
  Let~\(Y\) be a closed subset of a smooth manifold~\(X\) and let \(\Cinf(Y)\) be the Fr\'echet algebra of Whitney functions on \(Y\subseteq X\).  Then
  \begin{align*}
    \HH_k\bigl(\Whitney(Y)\bigr) &\cong \Whitney\Omega^k(Y),\\
    \HC_k\bigl(\Whitney(Y)\bigr) &\cong
    \Whitney\Omega^k(Y)\bigm/ \diff(\Whitney\Omega^{k-1}Y) \oplus \Ho_\dR^{k-2}(Y)
    \oplus \Ho_\dR^{k-4}(Y) \oplus\dotsb,\\
    \HP_k\bigl(\Whitney(Y)\bigr) &\cong
    \bigoplus_{j\in\Z} \Ho_\dR^{k-2j}(Y).
  \end{align*}
\end{theorem}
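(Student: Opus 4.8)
The plan is to obtain all three computations from the Hochschild Excision Theorem~\ref{the:HH_excision} applied to the extension~\eqref{eq:Whitney_extension}. This is legitimate because we have verified that \eqref{eq:Whitney_extension} is pure and that its kernel \(\Flat{Y}{X}\) is homologically unital (Corollary~\ref{cor:Flat_H-unital}), and because \(\Unit=\C\) is both projective and injective in~\(\Frech\); since excision in Hochschild homology is known to imply excision in cyclic and periodic cyclic homology, we get long exact sequences linking the three homology theories of \(\Flat{Y}{X}\), \(\Cinf(X)\), and \(\Whitney(Y)\).

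First I would treat Hochschild homology. In the excision sequence
\[
\dotsb \to \HH_{n+1}\bigl(\Whitney(Y)\bigr) \to \HH_n\bigl(\Flat{Y}{X}\bigr) \to \HH_n\bigl(\Cinf(X)\bigr) \to \HH_n\bigl(\Whitney(Y)\bigr) \to \HH_{n-1}\bigl(\Flat{Y}{X}\bigr) \to \dotsb
\]
the middle map is, by Theorem~\ref{the:HH_Cinf} and Proposition~\ref{pro:HH_Flat}, the inclusion \(\Flat[^\infty\Omega^n]{Y}{X}\to\Omega^n(X)\) of the flat \(n\)\nb-forms, which is injective with closed range. Injectivity forces every connecting map to vanish, so the sequence splits into short exact sequences \(\Flat[^\infty\Omega^n]{Y}{X}\mono\Omega^n(X)\epi\HH_n\bigl(\Whitney(Y)\bigr)\), whence \(\HH_n\bigl(\Whitney(Y)\bigr)\cong\Whitney\Omega^n(Y)\).

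For the cyclic and periodic theories I would pass to the Connes mixed complex \((\HHchain_*(A),b,B)\), from which \(\HC\) and \(\HP\) are computed by the usual mixed double complex, and I would use that excision in Hochschild homology refines to a cofibre sequence of mixed complexes. By Connes' computation~\cite{Connes:Noncommutative_Diffgeo} --~which underlies Theorem~\ref{the:HH_Cinf} and moreover identifies~\(B\) with the de Rham differential~\(\diff\) under the maps \(j\) and~\(k\)~-- the mixed complex of \(\Cinf(X)\) is equivalent to \((\Omega^*(X),0,\diff)\); tensoring over \(\Cinf(X)\) as in the proof of Proposition~\ref{pro:HH_Flat} and using the naturality of~\(B\), the mixed complex of \(\Flat{Y}{X}\) is equivalent to \((\Flat[^\infty\Omega^*]{Y}{X},0,\diff)\), compatibly with the inclusion into \((\Omega^*(X),0,\diff)\). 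Since \(\Flat[^\infty\Omega^*]{Y}{X}\) is a closed \(\diff\)\nb-stable subcomplex of \(\Omega^*(X)\), the Open Mapping Theorem makes \(\Flat[^\infty\Omega^*]{Y}{X}\mono\Omega^*(X)\epi\Whitney\Omega^*(Y)\) a conflation of mixed complexes, so the excision cofibre sequence identifies the mixed complex of \(\Whitney(Y)\) with \((\Whitney\Omega^*(Y),0,\diff)\). It then remains to quote the standard computation of the homology of a mixed complex \((M^\bullet,0,\diff)\), namely \(\HH_n\cong M^n\), \(\HC_n\cong M^n/\diff M^{n-1}\oplus\bigoplus_{k\ge1}\Ho^{n-2k}(M^\bullet,\diff)\), and \(\HP_n\cong\bigoplus_{j\in\Z}\Ho^{n-2j}(M^\bullet,\diff)\), and to apply it with \(M^\bullet=\Whitney\Omega^\bullet(Y)\), whose \(\diff\)\nb-cohomology is \(\Ho_\dR^\bullet(Y)\) by definition.

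The point requiring the most care is the upgrade from chain complexes to mixed complexes: Theorem~\ref{the:HH_Cinf} and Proposition~\ref{pro:HH_Flat} only describe the Hochschild complexes, so one must check that the comparison maps are compatible with the Connes operator~\(B\). This reduces to the homological Hochschild--Kostant--Rosenberg theorem identifying \(B\) with the de Rham differential, which survives in the continuous setting upon inspecting Connes' bimodule resolution; the remainder is bookkeeping with long exact sequences together with the observation that the Fréchet-space maps involved have closed range, so that the relevant quotients are again Fréchet.
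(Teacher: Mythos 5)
Your proposal is correct and takes essentially the same route as the paper: the Hochschild part is identical (the excision sequence of Theorem~\ref{the:HH_excision} combined with the injectivity of the closed embedding \(\Flat[^\infty\Omega^k]{Y}{X}\to\Omega^k(X)\) from Proposition~\ref{pro:HH_Flat} splits the long exact sequence into short exact sequences), and your mixed-complex argument for \(\HC\) and \(\HP\) is just a more explicit packaging of the paper's appeal to the naturality of \(B\circ I\), its identification with the de Rham differential, and ``the same arguments as for smooth manifolds.'' The one point you rightly flag -- compatibility of the comparison maps with~\(B\) -- is handled in the paper by the same observation, namely that the antisymmetrisation/projection maps intertwine \(B\) with \(\diff\).
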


\begin{proof}
  The Excision Theorem~\ref{the:HH_excision} provides a long exact sequence
  \begin{multline*}
    \dotsb \to \HH_k\bigl(\Flat{Y}{X}\bigr) \to \HH_k\bigl(\Cinf(X)\bigr) \to \HH_k\bigl(\Whitney(Y)\bigr)\\
    \to \HH_{k-1}\bigl(\Flat{Y}{X}\bigr) \to \HH_{k-1}\bigl(\Cinf(X)\bigr) \to \HH_{k-1}\bigl(\Whitney(Y)\bigr) \to \dotsb
  \end{multline*}
  because \(\Flat{Y}{X}\) is homologically unital and the extension~\eqref{eq:Whitney_extension} is pure.  We have seen above that the map \(\HH_k\bigl(\Flat{Y}{X}\bigr) \to \HH_k\bigl(\Cinf(X)\bigr)\) is equivalent to the map \(\Flat[^\infty\Omega^k]{Y}{X} \to \Omega^k(X)\).  The latter is a closed embedding and, in particular, injective.  Hence the long exact sequence above yields \(\HH_k\bigl(\Whitney(Y)\bigr) \cong \Whitney\Omega^k(Y)\) as asserted.

  To go from Hochschild homology to cyclic homology, we may use the natural map \(B\circ I\colon \HH_k(A) \to \HH_{k+1}(A)\).  For \(A=\Cinf(X)\), this map corresponds to the de Rham boundary map.  By naturality, this remains true for \(\Whitney(Y)\) as well.  The same arguments as for smooth manifolds now yield the formulas for the cyclic and periodic cyclic homology of~\(Y\).
\end{proof}

If the de Rham boundary map on \(\Whitney\Omega^*(Y)\) has closed range, then \(\HC_*\bigl(\Whitney(Y)\bigr)\) and \(\HP_*\bigl(\Whitney(Y)\bigr)\) are Fr\'echet spaces, and the cohomology spaces \(\HC^*\bigl(\Whitney(Y)\bigr)\) and \(\HP^*\bigl(\Whitney(Y)\bigr)\) are just their topological duals.  Together with Markus Pflaum, I plan to show in a forthcoming article that the boundary map on \(\Whitney\Omega^*(Y)\) always has closed range, and to identify the resulting cohomology theory with the Alexander--Spanier cohomology of~\(Y\) in favourable cases.

The results above carry over with small changes to the algebra of compactly supported Whitney functions.  There are, in fact, two ways to proceed.  Either we use Theorem \ref{the:HH_Ccinf} instead of~\ref{the:HH_Cinf} and copy the excision argument above; this requires the excision theorem for ind-locally split extensions of nuclear bornological algebras.  Or we copy the reduction from \(\Ccinf(X)\) to \(\Cinf(X)\) for the algebra of Whitney functions.  We may define a functor \(M\mapsto M_\cpt\) also for modules over \(\Whitney(Y)\), and everything said in Section~\ref{sec:smooth_compact_support} carries over to Whitney functions instead of smooth functions without change.  This shows that the Hochschild chain complex for the algebra of compactly supported Whitney functions is \(\HHchain\bigl(\Whitney(Y)\bigr)_\cpt\).  Now we can copy the arguments above.  De Rham cohomology is replaced by compactly supported de Rham cohomology, of course:

\begin{theorem}
  \label{the:HH_HC_HP_Whitney_c}
  Let~\(Y\) be a closed subset of a smooth manifold~\(X\) and let \(\Whitney_\cpt(Y)\) be the bornological algebra of compactly supported Whitney functions on \(Y\subseteq X\).  Then
  \begin{align*}
    \HH_k\bigl(\Whitney_\cpt(Y)\bigr) &\cong \Whitney_\cpt\Omega^k(Y),\\
    \HC_k\bigl(\Whitney_\cpt(Y)\bigr) &\cong
    \Whitney_\cpt\Omega^k(Y)\bigm/ \diff(\Whitney_\cpt\Omega^{k-1} Y) \oplus \Ho_{\dR,\cpt}^{k-2}(Y)
    \oplus \Ho_{\dR,\cpt}^{k-4}(Y) \oplus\dotsb,\\
    \HP_k\bigl(\Whitney_\cpt(Y)\bigr) &\cong
    \bigoplus_{j\in\Z} \Ho_{\dR,\cpt}^{k-2j}(Y).
  \end{align*}
\end{theorem}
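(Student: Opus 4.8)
The plan is to deduce the theorem from the Fréchet computation of Theorem~\ref{the:HH_HC_HP_Whitney} by running the Excision Theorem~\ref{the:HH_excision} in the bornological setting, equipping the category of bornological vector spaces with the ind-locally split exact structure (the one pulled back along $\diss$ from the locally split structure of $\Indban$). Concretely, I would work with the extension of bornological algebras
\[
\Flat{Y}{X}_\cpt \mono \Ccinf(X) \epi \Whitney_\cpt(Y),
\]
where $\Flat{Y}{X}_\cpt \defeq \Flat{Y}{X}\cap\Ccinf(X)$ carries the subspace bornology and $\Whitney_\cpt(Y) = \Whitney(Y)_\cpt = \Ccinf(X)\bigm/\Flat{Y}{X}_\cpt$ the quotient bornology, so that this is a conflation in $\Born$. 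Since $\Ccinf(X)$ is a nuclear LF\nobreakdash-space, so is its quotient $\Whitney_\cpt(Y)$; hence $\diss$ maps the displayed extension to a locally split extension of inductive systems of Banach spaces (Proposition~\ref{pro:nuclear_quotient_locally_split}), so it is a pure conflation for the ind-locally split structure (Proposition~\ref{pro:locally_split_pure}). For H\nobreakdash-unitality of the kernel I would transcribe Corollary~\ref{cor:Flat_H-unital}: the sections $\sigma_\leftsub,\sigma_\rightsub$ of Proposition~\ref{pro:Flat_quasi-unital}, corrected by a partition of unity $\sum\varphi_n^2 = 1$ of compactly supported smooth functions exactly as in the proof of Proposition~\ref{pro:compact_support_functor_exact}, restrict to bounded one-sided $\Cinf(X)$-module sections of $\Flat{Y}{X}_\cpt \hot \Flat{Y}{X}_\cpt \to \Flat{Y}{X}_\cpt$, so that $\bigl(\Flat{Y}{X}_\cpt^{\otimes n}, b'\bigr)$ has a bounded contracting homotopy and is in particular pure exact.

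With these two inputs, Theorem~\ref{the:HH_excision} (whose hypothesis ``$\Unit$ projective'' holds in $\Born$) yields a natural long exact sequence
\[
\dotsb \to \HH_k\bigl(\Flat{Y}{X}_\cpt\bigr) \to \HH_k\bigl(\Ccinf(X)\bigr) \to \HH_k\bigl(\Whitney_\cpt(Y)\bigr) \to \HH_{k-1}\bigl(\Flat{Y}{X}_\cpt\bigr) \to \dotsb.
\]
Theorem~\ref{the:HH_Ccinf} gives $\HH_k(\Ccinf(X)) \cong \Omega_\cpt^k(X)$, and rerunning the argument of Proposition~\ref{pro:HH_Flat} with $\Ccinf(X)$ in place of $\Cinf(X)$ — using the bounded $\Ccinf(X)$-linear homotopy equivalence $\HHchain_*(\Ccinf(X)) \simeq (\Omega_\cpt^*(X),0)$ of Theorem~\ref{the:HH_Ccinf}, the H\nobreakdash-unitality just established, and Corollary~\ref{cor:HH_II_EI} — identifies $\HHchain_*(\Flat{Y}{X}_\cpt)$ up to pure quasi-isomorphism with $\Flat{Y}{X}_\cpt \otimes_{\Ccinf(X)} \Omega_\cpt^*(X) \cong \Flat[^\infty\Omega^*]{Y}{X}_\cpt$ (the last step by Swan's theorem, as in Proposition~\ref{pro:HH_Flat}), so that $\HH_k(\Flat{Y}{X}_\cpt) \cong \Flat[^\infty\Omega^k]{Y}{X}_\cpt$ and the connecting map into $\HH_k(\Ccinf(X))$ is the injective inclusion $\Flat[^\infty\Omega^k]{Y}{X}_\cpt \hookrightarrow \Omega_\cpt^k(X)$. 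The long exact sequence then collapses and gives $\HH_k(\Whitney_\cpt(Y)) \cong \Whitney_\cpt\Omega^k(Y) \defeq \Omega_\cpt^k(X)\bigm/\Flat[^\infty\Omega^k]{Y}{X}_\cpt$. Equivalently, one may extend the functor $M \mapsto M_\cpt$ of Section~\ref{sec:smooth_compact_support} to $\Whitney(Y)$- and $\Flat{Y}{X}$-modules, verify that Proposition~\ref{pro:compact_support_functor_exact} and Lemma~\ref{lem:Ccinf_projective} carry over verbatim, conclude $\HHchain_*(\Whitney_\cpt(Y)) \simeq \HHchain_*(\Whitney(Y))_\cpt$, and transport the whole of Theorem~\ref{the:HH_HC_HP_Whitney} through $(-)_\cpt$.

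For cyclic and periodic cyclic homology I would argue as in the last paragraph of the proof of Theorem~\ref{the:HH_HC_HP_Whitney}: Connes' operator $B \circ I\colon \HH_k \to \HH_{k+1}$ is, for $\Ccinf(X)$, the de Rham differential on $\Omega_\cpt^*(X)$, hence by naturality it is the induced differential on $\Whitney_\cpt\Omega^*(Y)$ for $\Whitney_\cpt(Y)$; feeding this into the usual Connes sequence relating Hochschild and cyclic homology produces the stated formulas for $\HC_k$ and $\HP_k$, with $\Ho_{\dR,\cpt}^*(Y)$ the cohomology of $(\Whitney_\cpt\Omega^*(Y),\diff)$, i.e.\ the compactly supported de Rham cohomology of $Y$. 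I expect the main obstacle to be purely bornological bookkeeping in the first paragraph: one must make sure that $\Flat{Y}{X}_\cpt \mono \Ccinf(X) \epi \Whitney_\cpt(Y)$ really is a conflation of bornological algebras with the expected bornologies, that $\diss$ behaves as needed on these nuclear LF\nobreakdash-spaces and their tensor products, and that the partition-of-unity correction genuinely preserves compact supports; these are exactly the places where the passage from Fréchet spaces to nuclear LF\nobreakdash-spaces could break down, and once the bornological analogues of Theorem~\ref{the:nuclear_Frechet_locally_split_extension} and Corollary~\ref{cor:Flat_H-unital} are in hand, everything else is a line-by-line copy of the Fréchet argument.
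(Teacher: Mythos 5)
Your proposal is correct and follows essentially the same route as the paper, which itself only sketches the argument in the paragraph preceding the theorem by offering exactly the two options you describe: either run the excision argument bornologically using Theorem~\ref{the:HH_Ccinf} and the ind-locally split exact structure coming from nuclearity, or transport Theorem~\ref{the:HH_HC_HP_Whitney} through the support-restriction functor \(M\mapsto M_\cpt\) of Section~\ref{sec:smooth_compact_support}. Your elaboration of the bookkeeping (purity via Propositions \ref{pro:nuclear_quotient_locally_split} and~\ref{pro:locally_split_pure}, H\nb-unitality via a partition-of-unity correction of \(\sigma_\rightsub\)) is exactly what the paper leaves implicit.
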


Finally, we turn to Hochschild cohomology with coefficients.

\begin{lemma}
  \label{lem:Cinf_cohomology_symmetric}
  Let~\(M\) be a \(\Cinf(X)\)-module, viewed as a symmetric \(\Cinf(X)\)-bimodule.  Then \(\HHchain^*(\Cinf(X),M)\) is chain homotopy equivalent to \(\Hom_{\Cinf(X)}(\Omega^*(X),M)\) with zero boundary map.  In particular, \(\HH^k(\Cinf(X),M)\cong \Hom_{\Cinf(X)}(\Omega^*(X),M)\).
\end{lemma}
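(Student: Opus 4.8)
The plan is to deduce this from Theorem~\ref{the:HH_Cinf} by a short duality argument. First I would establish a general fact: for a commutative algebra~$A$ in~$\Frech$ and a symmetric $A$\nb-bimodule~$M$, the Hochschild cochain complex $\HHchain^*(A,M)$ is naturally isomorphic, as a cochain complex, to $\Hom_A\bigl(\HHchain_*(A,A),M\bigr)$. Here $\HHchain_*(A,A)=(A^{\otimes n+1},b)$ is viewed as a chain complex of $A$\nb-modules via $a\cdot(a_0\otimes\dotsb\otimes a_n)\defeq (aa_0)\otimes a_1\otimes\dotsb\otimes a_n$, and $\Hom_A$ denotes continuous $A$\nb-module homomorphisms.

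To prove this, I would note that $A^{\otimes n+1}=A\prot A^{\otimes n}$ is the free $A$\nb-module on~$A^{\otimes n}$, so the universal property of~$\prot$ gives a natural isomorphism $\Hom_A(A^{\otimes n+1},M)\cong\Hom(A^{\otimes n},M)=\HHchain^n(A,M)$, $\phi\mapsto\bigl(v\mapsto\phi(1\otimes v)\bigr)$. It then remains to check that the transpose of $b\colon\HHchain_{n+1}(A,A)\to\HHchain_n(A,A)$ becomes the Hochschild coboundary~$b^*$ under this identification. All summands of~$b$ except the last one transpose directly to the corresponding summands of~$b^*$. The last summand $(-1)^{n+1}a_{n+1}a_0\otimes a_1\otimes\dotsb\otimes a_n$ of~$b$ transposes to $f\mapsto\bigl((a_1,\dotsc,a_{n+1})\mapsto(-1)^{n+1}a_{n+1}\cdot f(a_1,\dotsc,a_n)\bigr)$, and this agrees with the last summand $(-1)^{n+1}f(a_1,\dotsc,a_n)\cdot a_{n+1}$ of~$b^*$ \emph{precisely because $M$ is symmetric}. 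This verification is the only step that is not purely formal, and it is where the hypothesis on~$M$ enters; I expect it to be essentially the only obstacle.

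With this in hand, I would invoke Theorem~\ref{the:HH_Cinf}, which provides continuous $\Cinf(X)$\nb-linear chain maps $j$ and~$k$ exhibiting $(\Cinf(X)^{\otimes n+1},b)=\HHchain_*\bigl(\Cinf(X),\Cinf(X)\bigr)$ as chain homotopy equivalent, as a chain complex of $\Cinf(X)$\nb-modules, to $(\Omega^*(X),0)$. The contravariant functor $\Hom_{\Cinf(X)}(\blank,M)$ carries a $\Cinf(X)$\nb-linear chain homotopy equivalence to a chain homotopy equivalence of cochain complexes, so combining it with the isomorphism of the previous paragraph gives
\[
\HHchain^*\bigl(\Cinf(X),M\bigr)\cong\Hom_{\Cinf(X)}\bigl(\HHchain_*(\Cinf(X),\Cinf(X)),M\bigr)\simeq\Hom_{\Cinf(X)}\bigl(\Omega^*(X),M\bigr),
\]
the last cochain complex carrying the zero boundary map. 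Passing to cohomology yields $\HH^k\bigl(\Cinf(X),M\bigr)\cong\Hom_{\Cinf(X)}\bigl(\Omega^k(X),M\bigr)$, completing the argument.
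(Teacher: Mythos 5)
Your proposal is correct and follows essentially the same route as the paper: identify \(\HHchain^*(A,M)\) with \(\Hom_A\bigl((A^{\otimes n+1},b),M\bigr)\) via the free-module adjunction (your explicit check of the last summand of \(b^*\), using commutativity and symmetry of~\(M\), is exactly the point the paper leaves implicit), then apply \(\Hom_{\Cinf(X)}(\blank,M)\) to the \(\Cinf(X)\)-linear chain homotopy equivalence of Theorem~\ref{the:HH_Cinf}.
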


\begin{proof}
  For any commutative unital algebra~\(A\), the chain complex \(\HHchain^*(A,M)\) is naturally isomorphic to \(\Hom_A(\HHchain_*(A),M)\) because
  \[
  \Hom_A(A\otimes A^{\otimes n},M) \cong \Hom(A^{\otimes n},M)
  \]
  and~\(b\) on \(A\otimes A^{\otimes n}\) induces the boundary map~\(b^*\) on \(\Hom(A^{\otimes n},M)\).  Theorem~\ref{the:HH_Cinf} asserts that \(\HHchain_*\bigl(\Cinf(X)\bigr)\) is \(\Cinf(X)\)-linearly chain homotopy equivalent to \(\Omega^*(X)\) with zero boundary map.  Hence we get an induced chain homotopy equivalence between \(\HHchain^*(\Cinf(X),M)\) and \(\Hom_{\Cinf(X)}(\Omega^*(X),M)\) with zero boundary map.
\end{proof}

Let \(Q\defeq \Whitney(Y)\), \(E\defeq \Cinf(X)\), and let~\(M\) be a \(Q\)\nb-module.  We may view~\(M\) as an \(E\)\nb-module, as a symmetric \(Q\)\nb-bimodule, and as a symmetric \(E\)\nb-bimodule.  Whenever Theorem~\ref{the:excision_extension_quotient_cohomology} applies, we get
\[
\HH^*(\Whitney(Y),M) \cong \HH^*(\Cinf(X),M)
\cong \Hom_{\Cinf(X)}(\Omega^*(X),M).
\]
Unfortunately, it is not clear whether this holds in the most interesting case \(M=\Cinf(X)\).  But since the extension \(\Flat{Y}{X} \mono \Cinf(X) \epi \Whitney(Y)\) is pure pro-locally split by Theorem~\ref{the:nuclear_Frechet_locally_split_extension}, we may use the exact category structure of pro-locally split extensions and apply Theorem~\ref{the:excision_extension_quotient_cohomology} whenever~\(M\) is a Banach space.  In particular, we may use the \(k\)~times differentiable version \(\Whitney[k](Y)\) of \(\Whitney(Y)\), which is the Banach space quotient of the Banach algebra \(\Cinf[k](X)\) of \(k\)~times continuously differentiable functions on~\(X\) by the closed ideal of all \(k\)~times differentiable functions whose derivatives of order at most~\(k\) vanish on~\(Y\).

\begin{theorem}
  \label{the:Whitney_cohomology_Banach}
  \(\HH^k\bigl(\Whitney(Y),\Whitney[k](Y)\bigr)\) is isomorphic to the space of \(k\)~times continuously differentiable Whitney \(k\)\nb-vector fields on~\(Y\).
\end{theorem}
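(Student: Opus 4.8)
The plan is to chain the excision theorem for pro-locally split extensions together with Lemma~\ref{lem:Cinf_cohomology_symmetric}, and then to identify the resulting space of module homomorphisms with Whitney vector fields by Swan's Theorem. Concretely, I would set $E\defeq\Cinf(X)$, $Q\defeq\Whitney(Y)$, $I\defeq\Flat{Y}{X}$, and $M\defeq\Whitney[k](Y)$, and equip~$\Frech$ with the exact category structure whose conflations are the pro-locally split extensions. The algebra conflation $I\mono E\epi Q$ is \emph{pure} for this structure: since~$\Whitney(Y)$ is nuclear, Theorem~\ref{the:nuclear_Frechet_locally_split_extension} shows it is pure pro-locally split, that is, it remains a pro-locally split conflation after tensoring with any Fr\'echet space. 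Its kernel~$I$ is homologically unital for \emph{any} exact structure on~$\Frech$ by Corollary~\ref{cor:Flat_H-unital}. The module~$M=\Whitney[k](Y)$ is a Banach space, hence injective as an object of~$\Frech$ with this exact structure, as noted after Theorem~\ref{the:excision_extension_quotient_cohomology}. One point to verify is that the $\Cinf(X)$-module structure on~$\Whitney[k](Y)$ — inherited from the inclusion $\Cinf(X)\to\Cinf[k](X)$ and multiplication — factors through~$\Whitney(Y)$, so that~$M$ genuinely is a $Q$\nb-bimodule; this holds because the Leibniz rule shows that a smooth function flat on~$Y$ lies in the closed ideal of $\Cinf[k](X)$ defining~$\Whitney[k](Y)$. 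Granting these verifications, Theorem~\ref{the:excision_extension_quotient_cohomology} gives a natural isomorphism
\[
\HH^*\bigl(\Whitney(Y),\Whitney[k](Y)\bigr)\cong\HH^*\bigl(\Cinf(X),\Whitney[k](Y)\bigr).
\]

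Viewing~$\Whitney[k](Y)$ as a symmetric $\Cinf(X)$-bimodule, Lemma~\ref{lem:Cinf_cohomology_symmetric} identifies the right-hand side in degree~$k$ with $\Hom_{\Cinf(X)}\bigl(\Omega^k(X),\Whitney[k](Y)\bigr)$. It then remains to identify this with the space of $k$~times continuously differentiable Whitney $k$\nb-vector fields on~$Y$. Since $\Omega^k(X)=\Gamma^\infty(\Lambda^k\Tang^*X)$ is a finitely generated projective $\Cinf(X)$-module — by Swan's Theorem, $\Lambda^k\Tang^*X$ is a direct summand of a trivial bundle of finite rank — we obtain $\Hom_{\Cinf(X)}\bigl(\Omega^k(X),M\bigr)\cong\Gamma^\infty(\Lambda^k\Tang X)\otimes_{\Cinf(X)}M$ via the dual bundle $(\Lambda^k\Tang^*X)^*\cong\Lambda^k\Tang X$. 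Taking $M=\Whitney[k](Y)$ and arguing as in the proof of Proposition~\ref{pro:HH_Flat}, where the general bundle case is reduced to trivial bundles by Swan's Theorem, the module $\Gamma^\infty(\Lambda^k\Tang X)\otimes_{\Cinf(X)}\Whitney[k](Y)$ is the space of $k$~times continuously differentiable Whitney sections of $\Lambda^k\Tang X$ on~$Y$, which is exactly the space of $k$~times continuously differentiable Whitney $k$\nb-vector fields on~$Y$.

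The conceptual obstacle, and the reason the statement uses the Banach module~$\Whitney[k](Y)$ rather than the more natural coefficient algebra~$\Cinf(X)$, is the injectivity hypothesis in Theorem~\ref{the:excision_extension_quotient_cohomology}: there are too few injective Fr\'echet spaces for the theorem to apply with $M=\Cinf(X)$, and it is precisely the passage to the finer exact structure of pro-locally split extensions that makes Banach coefficient modules injective. The remaining technical points — the factorisation of the module structure through~$\Whitney(Y)$, the finite generation of $\Omega^k(X)$, and the handling of topologies in the relevant spaces of module maps and balanced tensor products — are routine and parallel to the homology computation already carried out in Proposition~\ref{pro:HH_Flat}.
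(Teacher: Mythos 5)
Your proposal is correct and follows essentially the same route as the paper: pass to the exact structure of pro-locally split conflations (where the Whitney extension is pure by Theorem~\ref{the:nuclear_Frechet_locally_split_extension}, the kernel is H\nb-unital by Corollary~\ref{cor:Flat_H-unital}, and the Banach space \(\Whitney[k](Y)\) is injective), apply Theorem~\ref{the:excision_extension_quotient_cohomology} and Lemma~\ref{lem:Cinf_cohomology_symmetric}, and finish by identifying \(\Hom_{\Cinf(X)}\bigl(\Omega^k(X),\Whitney[k](Y)\bigr)\) via Swan's Theorem. The extra verifications you flag (the module structure factoring through \(\Whitney(Y)\), finite generation of \(\Omega^k(X)\)) are routine and consistent with what the paper leaves implicit.
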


\begin{proof}
  This follows from the above discussion and a straightforward computation of \(\Hom_{\Cinf(X)}\bigl(\Omega^*(X), \Whitney[k](Y)\bigr)\) that uses Swan's Theorem.  Of course, the space of \(k\)~times continuously differentiable Whitney \(k\)\nb-vector fields on~\(Y\) is the quotient of the space of \(k\)~times continuously differentiable \(k\)\nb-vector fields on~\(X\) by the subspace of vector fields that are flat on~\(Y\) (up to order~\(k\)).
\end{proof}

\section{Excision in periodic cyclic homology}
\label{sec:excision_periodic}

Joachim Cuntz and Daniel Quillen established that periodic cyclic homology satisfies excision for all algebra extensions~\cite{Cuntz-Quillen:Excision_bivariant}, even if the kernel is not homologically unital.  This leads us to expect that periodic cyclic homology for algebras in a symmetric monoidal category should satisfy an excision theorem for all pure algebra extensions.  But I do not know how to establish excision in this generality.  Instead, I only recall two more special results already in the literature and apply them to the algebra of smooth functions on a closed subset of a smooth manifold.

Let~\(\Cat\) be a symmetric monoidal category and let~\(\Procat\) be the category of projective systems in~\(\Cat\) with the induced tensor product.  For instance, the category \(\Proban\) of projective systems of Banach spaces is a special case of this which is closely related to topological vector spaces.  We assume that~\(\Cat\) is \(\Q\)\nb-linear because otherwise homotopy invariance and excision theorems fail.  We use locally split extensions as conflations to turn~\(\Procat\) into an exact category.  That is, a diagram \(I\to E\to Q\) in~\(\Procat\) is a conflation if and only if
\[
0\leftarrow\Hom(I,V)\leftarrow \Hom(E,V) \leftarrow
\Hom(Q,V)\leftarrow0
\]
is a short exact sequence for any \(V\in\Cat\).

The tensor unit~\(\Unit\) of \(\Cat\) is also a tensor unit in~\(\Procat\).  Our definition of a locally split extension ensures that~\(\Unit\) is injective.  In general, it is not projective, but this problem disappears if we restrict attention to suitable subcategories such as the subcategory of Fr\'echet spaces, which we identify with a full subcategory of \(\Proban\).

For an algebra~\(A\) in~\(\Cat\) or in~\(\Procat\), let \(\HPchain(A)\) be the \(\Z/2\)\nb-graded chain complex in \(\Procat\) that computes the periodic cyclic homology and cohomology of~\(A\) (see \cites{Cortinas-Valqui:Excision, Cuntz-Quillen:Excision_bivariant}).  Recall that this is always a chain complex of projective systems, even for~\(A\) in~\(\Cat\).

The following theorem is a special case of \cite{Cortinas-Valqui:Excision}*{Theorem 8.1} by Guillermo Corti\~nas and Christian Valqui.

\begin{theorem}
  \label{the:HP_excision_pro}
  Let \(I\mono E\epi Q\) be a locally split extension in~\(\Procat\).  Then the induced maps \(\HPchain(I)\to\HPchain(E)\to\HPchain(Q)\) form a cofibre sequence.  This always yields a cyclic six-term exact sequence for \(\HP^*\).  We also get a cyclic six-term exact sequence for \(\HP_*\) if \(\HPchain(I)\), \(\HPchain(E)\) and \(\HPchain(Q)\) are injective in~\(\Procat\).
\end{theorem}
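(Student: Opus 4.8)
The plan is to deduce the three assertions from \cite{Cortinas-Valqui:Excision}*{Theorem 8.1} together with the general machinery of Section~\ref{sec:exact_qi_Ho}. First I would make precise in what sense this is a special case: an algebra in~\(\Cat\) is \emph{a fortiori} an algebra in~\(\Procat\), the complex \(\HPchain(A)\) is by construction a \(\Z/2\)\nb-graded chain complex in~\(\Procat\), and a locally split extension in~\(\Procat\) is exactly the type of algebra extension to which the Corti\~nas--Valqui excision theorem applies. That theorem then says that \(\HPchain(I)\to\HPchain(E)\to\HPchain(Q)\) is a cofibre sequence in the sense of Section~\ref{sec:exact_qi_Ho}; this is the substantial content and is entirely due to them. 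Everything else is extracting the two six-term sequences from this cofibre sequence.

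For the cohomology statement I would invoke Section~\ref{sec:exact_qi_Ho} directly: a cofibre sequence yields a long exact sequence in \(\HP^*\) as soon as the dual forgetful functor \(V\mapsto[V]^*=\Hom(V,\Unit)\) is exact, and this holds here because~\(\Unit\) is injective in~\(\Procat\) once locally split extensions are taken as the conflations. By \(2\)\nb-periodicity of \(\HPchain\), this long exact sequence is a cyclic six-term exact sequence. No further hypothesis enters.

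For the homology statement one cannot argue this way, because~\(\Unit\) need not be projective in~\(\Procat\), so the forgetful functor \(V\mapsto[V]=\Hom(\Unit,V)\) — the inverse limit functor on projective systems — is only left exact. This is where the injectivity hypothesis is used. Writing \(f\colon\HPchain(I)\to\HPchain(E)\), the mapping cone fits into a conflation \(\HPchain(E)\mono\cone(f)\epi\HPchain(I)[1]\) of \(\Z/2\)\nb-graded complexes in~\(\Procat\) that is split in each degree; it therefore remains a degreewise split conflation after applying the additive functor \([\blank]\), and its long exact homology sequence relates \(\HP_*(E)\), \(\Ho_*(\cone(f))\) and \(\HP_*(I)\). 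It then remains to identify \(\Ho_*(\cone(f))\) with \(\HP_*(Q)\): the cofibre sequence says the canonical map \(\cone(f)\to\HPchain(Q)\) is a quasi-isomorphism, and since \(\cone(f)\) and \(\HPchain(Q)\) consist of injective objects of~\(\Procat\), this quasi-isomorphism is in fact a chain homotopy equivalence — equivalently, its mapping cone, being an exact complex of injective objects, is contractible. Hence \([\blank]\) carries it to a homotopy equivalence and \(\Ho_*(\cone(f))\congto\HP_*(Q)\). Splicing the two identifications and appealing once more to \(2\)\nb-periodicity produces the cyclic six-term exact sequence in \(\HP_*\).

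The main obstacle is precisely this homotopy-equivalence claim: \(\HPchain(A)\) is genuinely \(2\)\nb-periodic, and for unbounded complexes a quasi-isomorphism between termwise injective complexes need not be a homotopy equivalence. What makes it work is the special structure of the periodic cyclic complex, which is complete for the Hodge filtration; this is exactly what the argument behind \cite{Cortinas-Valqui:Excision}*{Theorem 8.1}, in the line of Cuntz and Quillen, exploits to build a contracting homotopy on \(\cone\bigl(\cone(f)\to\HPchain(Q)\bigr)\). I would therefore attribute the homology statement to the same source rather than reprove this point here.
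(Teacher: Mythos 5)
Your proposal matches the paper's treatment: the paper offers no proof of this theorem beyond declaring it a special case of \cite{Cortinas-Valqui:Excision}*{Theorem 8.1}, which is exactly where you locate the substantive content, and your extraction of the two six-term sequences follows the general machinery of Section~\ref{sec:exact_qi_Ho} that the paper sets up for this purpose (exactness of the dual forgetful functor because \(\Unit\) is injective in~\(\Procat\) for the cohomology sequence, and the extra termwise-injectivity hypothesis for the homology sequence). The subtlety you flag --- that a quasi-isomorphism between unbounded, \(\Z/2\)\nb-periodic complexes of injective objects need not be a homotopy equivalence --- is genuine, but since the paper itself delegates the entire statement to Corti\~nas--Valqui, deferring that point to the same source is consistent with what the paper does.
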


\begin{corollary}
  \label{cor:HP_excision_Frech_pro}
  Let \(I\mono E\epi Q\) be a pro-locally split extension of Fr\'echet algebras.  Then the induced maps \(\HPchain(I)\to\HPchain(E)\to\HPchain(Q)\) form a cofibre sequence, and there are induced cyclic six-term exact sequences for \(\HP_*\) and \(\HP^*\).
\end{corollary}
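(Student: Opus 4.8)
The plan is to embed the whole situation into $\Proban$, where Theorem~\ref{the:HP_excision_pro} applies, by means of the functor $\diss^*$ from Section~\ref{sec:examples_smc}. Recall that $\diss^*\colon\Frech\to\Proban$ is fully faithful, symmetric monoidal and fully exact, and that, by definition, an extension of Fr\'echet spaces is pro-locally split precisely when $\diss^*$ carries it to a locally split extension in $\Proban$. Hence $\diss^*$ sends the pro-locally split algebra extension $I\mono E\epi Q$ to a \emph{locally split} extension of algebras in $\Proban$ (the multiplications transport because $\diss^*$ is monoidal). Since $\diss^*$ respects $\prot$ together with the additive and exact structure used to build the periodic cyclic complex, $\HPchain$ of the algebras $I,E,Q$ computed in $\Proban$ agrees with $\HPchain$ computed in $\Frech$, i.e.\ $\HPchain(\diss^*A)\cong\diss^*\bigl(\HPchain(A)\bigr)$ for $A\in\{I,E,Q\}$. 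It therefore suffices to apply Theorem~\ref{the:HP_excision_pro} with $\Cat=\mathfrak{Ban}$ (which is $\Q$-linear) and $\Procat=\Proban$, noting that $\diss^*I$, $\diss^*E$, $\diss^*Q$ are algebras in~$\Procat$.

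Theorem~\ref{the:HP_excision_pro} then immediately gives that $\HPchain(I)\to\HPchain(E)\to\HPchain(Q)$ is a cofibre sequence. For the cyclic six-term exact sequence in~$\HP^*$ no further input is needed: the dual forgetful functor $\Hom_{\Proban}(\blank,\Unit)$ is exact on cofibre sequences because $\Unit=\C$ is injective for the locally split exact structure on~$\Proban$ --~this is exactly what the definition of ``locally split'' was designed to ensure~-- so applying it to the cofibre sequence above yields the long exact cohomology sequence.

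The one point that needs genuine verification is the six-term exact sequence in~$\HP_*$, for which Theorem~\ref{the:HP_excision_pro} asks that $\HPchain(I)$, $\HPchain(E)$ and $\HPchain(Q)$ be injective in~$\Proban$. I would check this from the structure of the periodic cyclic complex of a Fr\'echet algebra: by construction $\HPchain(A)$ is the inverse limit of a tower of Fr\'echet spaces with \emph{surjective} connecting maps --~for instance the Cuntz--Quillen tower $\bigl(X(TA/\overline{(IA)^{N}})\bigr)_{N\in\N}$ attached to the adic filtration of the tensor algebra, or the finite truncations of the periodic cyclic bicomplex along the periodicity operator~$S$. A tower of Fr\'echet spaces with surjective connecting maps satisfies the Mittag--Leffler condition, hence has vanishing $\varprojlim^1$, and this is precisely the property that makes such a pro-object injective for the locally split exact structure on~$\Proban$; equivalently, it makes the forgetful functor $\varprojlim=\Hom_{\Proban}(\Unit,\blank)$ exact on the conflations that the proof of Theorem~\ref{the:HP_excision_pro} assembles from $\HPchain(I)$, $\HPchain(E)$, $\HPchain(Q)$. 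Granting this, Theorem~\ref{the:HP_excision_pro} delivers the $\HP_*$ six-term sequence as well, and the corollary follows. I expect everything outside this last injectivity/Mittag--Leffler point to be routine bookkeeping around the functor $\diss^*$, so that is where I would concentrate the work.
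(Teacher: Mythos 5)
Your reduction to Theorem~\ref{the:HP_excision_pro} via $\diss^*$, and your treatment of the cofibre sequence and of the $\HP^*$ sequence, coincide with the paper's proof. The gap is in the $\HP_*$ part, which you correctly single out as the point needing work but then resolve incorrectly. You claim that vanishing of $\varprojlim^1$ for a tower with surjective structure maps is ``precisely the property'' that makes the pro-complexes $\HPchain(I)$, $\HPchain(E)$, $\HPchain(Q)$ injective for the locally split exact structure on $\Proban$. It is not. Injectivity of an object $X=(X_n)$ there means that every morphism $I\to X$ extends along every locally split inflation $I\mono E$; the obstruction is a $\varprojlim^1$ of the \emph{Hom} systems $\bigl(\Hom(Q,X_n)\bigr)_n$, and surjectivity of the structure maps $X_{n+1}\to X_n$ does not make these systems Mittag--Leffler, because that would require lifting morphisms from an arbitrary pro-Banach space $Q$ along $X_{n+1}\to X_n$. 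Moreover the levels of your towers are Fr\'echet spaces, not Banach spaces, and for the locally split structure only Banach spaces are injective by definition. So the injectivity hypothesis of Theorem~\ref{the:HP_excision_pro} is not verified (and there is no reason to expect it to hold here). You are conflating injectivity of the objects with exactness of the forgetful functor $\Hom(\Unit,\blank)=\varprojlim$ on the relevant conflations; your parenthetical ``equivalently'' asserts an equivalence that is false, although the second formulation is the one that can be made to work.

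The paper sidesteps the injectivity hypothesis entirely. Since the locally split conflations form a subclass of all extensions, a chain complex that is exact for the locally split structure is \emph{a fortiori} exact for the usual exact structure on $\Frech$ with all extensions as conflations; hence the cofibre sequence produced by Theorem~\ref{the:HP_excision_pro} is also a cofibre sequence for the ordinary exact structure on Fr\'echet spaces. For that structure $\C$ is both injective (Hahn--Banach) and projective (deflations of Fr\'echet spaces are surjective on elements), so the forgetful functor and its dual preserve the cofibre sequence and both six-term sequences follow at once. If you replace your Mittag--Leffler digression by this descent to $\Frech$, the rest of your argument stands.
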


\begin{proof}
  The embedding of the category of Fr\'echet spaces into \(\Proban\) is fully faithful and symmetric monoidal, so that it makes no difference in which category we form \(\HPchain\).  Theorem~\ref{the:HP_excision_pro} shows that \(\HPchain(I)\to\HPchain(E)\to\HPchain(Q)\) is a cofibre sequence for the pro-locally split exact category structure and \emph{a fortiori} for the usual exact category structure on~\(\Frech\).  Since~\(\C\) is both injective and projective as an object of~\(\Frech\), this cofibre sequence induces long exact sequences both in homology and in cohomology.
\end{proof}

Corollary~\ref{cor:HP_excision_Frech_pro} applies to all extensions of Fr\'echet algebras with nuclear quotient by Theorem~\ref{the:nuclear_Frechet_locally_split_extension}.

Let~\(Y\) be a closed subset of a smooth manifold~\(X\).  Let \(\Cinf_0(Y;X)\) be the closed ideal of all smooth functions on~\(X\) that vanish on~\(Y\) and let
\[
\Cinf(Y) \defeq \Cinf(X)\bigm/\Cinf_0(Y;X).
\]
We call \(\Cinf(Y)\) the algebra of \emph{smooth functions} on~\(Y\).  By design, the canonical homomorphism from \(\Cinf(Y)\) to the \(\textup C^*\)\nb-algebra of continuous functions on~\(Y\) is injective, that is, \(f=0\) in \(\Cinf(Y)\) if \(f(y)=0\) for all \(y\in Y\).  We get a quotient map \(\Whitney(Y)\to \Cinf(Y)\) because \(\Flat{Y}{X}\) is contained in \(\Cinf_0(Y;X)\).

\begin{theorem}
  \label{the:HP_smooth_singular}
  The quotient map \(\Whitney(Y)\to \Cinf(Y)\) induces a pro-locally split quasi-isomorphism \(\HPchain\bigl(\Whitney(Y)\bigr) \to \HPchain\bigl(\Cinf(Y)\bigr)\) and hence isomorphisms
  \[
  \HP_*\bigl(\Whitney(Y)\bigr) \cong
  \HP_*\bigl(\Cinf(Y)\bigr),\qquad
  \HP^*\bigl(\Whitney(Y)\bigr) \cong
  \HP^*\bigl(\Cinf(Y)\bigr).
  \]
\end{theorem}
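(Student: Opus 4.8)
The plan is to realise \(\Whitney(Y)\) as a projective limit of Fréchet algebras that are nilpotent extensions of \(\Cinf(Y)\), and then to combine the nilpotence invariance of periodic cyclic homology (Goodwillie's theorem) with a passage to the limit.

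First I would build the tower. For \(n\ge1\) let \(\Flat[^n]{Y}{X}\subseteq\Cinf(X)\) be the closed ideal of smooth functions all of whose partial derivatives of order \(<n\) vanish on~\(Y\); thus \(\Flat[^1]{Y}{X}=\Cinf_0(Y;X)\), the ideals decrease, and \(\bigcap_{n\ge1}\Flat[^n]{Y}{X}=\Flat{Y}{X}\) by the definition of flatness. Put \(W_n\defeq\Cinf(X)/\Flat[^n]{Y}{X}\); these form a tower of Fréchet algebras with \emph{surjective} transition maps \(W_{n+1}\epi W_n\), and \(W_1=\Cinf(Y)\). Whitney's extension theorem for jets on an arbitrary closed subset (see~\cite{Tougeron:Ideaux}) identifies \(\Whitney(Y)=\Cinf(X)/\Flat{Y}{X}\) with \(\varprojlim_n W_n\) as a Fréchet algebra: the natural map is injective because \(\bigcap_n\Flat[^n]{Y}{X}=\Flat{Y}{X}\), surjective by Whitney's theorem, and open by the Open Mapping Theorem.

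Next I would run the tower through the excision machinery. A Leibniz-rule computation gives \((\Flat[^1]{Y}{X})^n\subseteq\Flat[^n]{Y}{X}\) and \((\Flat[^n]{Y}{X})^2\subseteq\Flat[^{n+1}]{Y}{X}\), so both \(\ker(W_n\epi W_1)=\Cinf_0(Y;X)/\Flat[^n]{Y}{X}\) and \(\ker(W_{n+1}\epi W_n)=\Flat[^n]{Y}{X}/\Flat[^{n+1}]{Y}{X}\) are nilpotent Fréchet algebras. Each \(W_n\) is a quotient of the nuclear Fréchet space \(\Cinf(X)\), so all these algebra extensions have nuclear quotient and are pro-locally split by Theorem~\ref{the:nuclear_Frechet_locally_split_extension}. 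Combining Theorem~\ref{the:HP_excision_pro} (for the pro-locally split exact structure, via Corollary~\ref{cor:HP_excision_Frech_pro}) with Goodwillie's theorem that \(\HPchain\) is acyclic on nilpotent algebras, I conclude that each \(W_{n+1}\epi W_n\) and each \(W_n\epi W_1=\Cinf(Y)\) induces a quasi-isomorphism on \(\HPchain\).

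Finally I would pass to the limit. By Lemma~\ref{lem:tensor_Flat} and its obvious \(k\)-fold variant the completed tensor powers are \(\Whitney(Y)^{\prot k}\cong\Whitney(Y^k)\), and \(W_n^{\prot k}\) is, up to a cofinal reindexing in~\(n\), the quotient of \(\Cinf(X^k)\) by \(\Flat[^n]{Y^k}{X^k}\); Whitney's extension theorem on \(X^k\) then gives \(\Whitney(Y)^{\prot k}\cong\varprojlim_n W_n^{\prot k}\) for every~\(k\), hence \(\HPchain\bigl(\Whitney(Y)\bigr)\cong\varprojlim_n\HPchain(W_n)\) as chain complexes. The transition maps of this tower are surjective in each degree, so no \(\varprojlim^1\)-terms appear; since by the previous step they are also quasi-isomorphisms, \(\varprojlim_n\HPchain(W_n)\) is quasi-isomorphic to \(\HPchain(W_1)=\HPchain\bigl(\Cinf(Y)\bigr)\), and chasing the identifications shows this quasi-isomorphism is the one induced by the quotient homomorphism \(\Whitney(Y)\epi\Cinf(Y)\). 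Because \(\Cinf(Y)\) is nuclear this homomorphism is pro-locally split, so the induced quasi-isomorphism is pro-locally split; the statement for \(\HP^*\) follows by the same reasoning applied to the dual forgetful functor, using that the continuous dual of a reduced projective limit of Fréchet spaces is the inductive limit of the duals.

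I expect the passage to the limit to be the main obstacle: one must verify that forming \(\HPchain\) commutes with this countable projective limit of nuclear Fréchet algebras, for which both the surjectivity of the transition maps (to kill \(\varprojlim^1\)) and the explicit description of the completed tensor powers as flat-ideal quotients via Lemma~\ref{lem:tensor_Flat} seem essential, whereas Whitney's extension theorem and Goodwillie's nilpotence invariance, though indispensable, are quoted from the literature. Equivalently, one may first apply Corollary~\ref{cor:HP_excision_Frech_pro} to the pro-locally split extension \(\Cinf_0(Y;X)/\Flat{Y}{X}\mono\Whitney(Y)\epi\Cinf(Y)\), reducing the whole theorem to the vanishing of \(\HPchain\bigl(\Cinf_0(Y;X)/\Flat{Y}{X}\bigr)\), and then run the same limit argument for the pro-nilpotent algebra \(\Cinf_0(Y;X)/\Flat{Y}{X}\cong\varprojlim_n\Cinf_0(Y;X)/\Flat[^n]{Y}{X}\).
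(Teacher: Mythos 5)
Your closing ``equivalently'' remark is in fact the paper's proof, and the shortcut it affords makes the entire tower unnecessary: one applies Corollary~\ref{cor:HP_excision_Frech_pro} to the pro-locally split extension \(N\mono\Whitney(Y)\epi\Cinf(Y)\) with \(N\defeq\Cinf_0(Y;X)/\Flat{Y}{X}\), and then observes that \(N\) is already \emph{topologically nilpotent} as a single Fr\'echet algebra: a product of \(k\) functions vanishing on \(Y\) vanishes on \(Y\) to order \(k\) (your own Leibniz computation), and every continuous seminorm on \(N\) involves only finitely many derivatives, so every continuous seminorm kills all \(k\)-fold products for large \(k\). Hence \(\diss^*(N)\) is a pro-nilpotent pro-algebra and Goodwillie's theorem in its pro-nilpotent form (\cite{Meyer:HLHA}*{Theorem 4.31}) gives \emph{contractibility} of \(\HPchain(N)\) outright --- no Whitney extension theorem, no passage to a limit.

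The genuine gap is in your main route, precisely at the step you flag as the obstacle. First, \(\HPchain(A)\) is a chain complex of \emph{projective systems}, and in \(\Proban\) a quasi-isomorphism (let alone a pro-locally split one, which is what the theorem asserts and what the cohomology statement needs) is not detected by homology; your Mittag--Leffler/\(\varprojlim^1\) argument, even granting all the identifications, only produces an isomorphism on the plain homology groups \(\HP_*\), not the asserted pro-locally split quasi-isomorphism of pro-complexes. Second, the identification \(\Whitney(Y)^{\prot k}\cong\varprojlim_n W_n^{\prot k}\) rests on describing \(W_n^{\prot k}\) as a quotient of \(\Cinf(X^k)\) by a finite-order flat ideal ``up to cofinal reindexing''; Lemma~\ref{lem:tensor_Flat} only treats the infinite-order ideals \(\Flat{Y}{X}\), and its proof uses density and nuclearity arguments that do not transfer verbatim to \(\Flat[^n]{Y}{X}\), so this needs a separate justification rather than an appeal to an ``obvious variant''. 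Both difficulties evaporate once you notice that contractibility of \(\HPchain(N)\) can be obtained directly from topological nilpotence of \(N\) as above.
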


Together with Theorem~\ref{the:HH_HC_HP_Whitney}, this yields a formula for the periodic cyclic homology of \(\Cinf(Y)\).

\begin{proof}
  The quotient map \(\Whitney(Y)\to \Cinf(Y)\) is an open surjection with kernel \(N\defeq \Cinf_0(Y;X) \bigm/ \Flat{Y}{X}\).  The extension \(N\mono \Whitney(Y) \epi \Cinf(Y)\) is pro-locally split because \(\Cinf(Y)\) is nuclear.  Corollary~\ref{cor:HP_excision_Frech_pro} shows that \(\HPchain\bigl(\Whitney(Y)\bigr) \to \HPchain\bigl(\Cinf(Y)\bigr)\) is a quasi-isomorphism if and only if \(\HPchain(N)\) is exact.

  We claim that the algebra~\(N\) is topologically nilpotent: if~\(p\) is any continuous semi-norm on~\(N\), then there is \(k\in\N\) such that~\(p\) vanishes on all products of~\(k\) elements in~\(N\).  Since functions in~\(N\) vanish on~\(Y\), products of~\(k\) functions in~\(N\) vanish on~\(Y\) to order~\(k\).  These are annihilated by~\(p\) for sufficiently high~\(k\) because any continuous seminorm on~\(N\) only involves finitely many derivatives.

  Since~\(N\) is topologically nilpotent, the associated pro-algebra \(\diss^*(N)\) is pro-nilpotent in the notation of~\cite{Meyer:HLHA}.  Goodwillie's Theorem \cite{Meyer:HLHA}*{Theorem 4.31} for pro-nilpotent pro-algebras asserts that \(\HPchain(N)\) is contractible.
\end{proof}

Another case where we can prove excision involves ind-algebras.  Let~\(\Cat\) be as above and let~\(\Indcat\) be the category of inductive systems in~\(\Cat\) with the canonical tensor product and the locally split extensions as conflations.  We equip the category of projective systems over~\(\Indcat\) with the induced exact category structure where the conflations are inductive systems of conflations in~\(\Indcat\).

\begin{theorem}
  \label{the:HP_excision_ind}
  Let \(I\mono E\epi Q\) be a locally split extension in~\(\Indcat\).  Then the induced maps \(\HPchain(I)\to\HPchain(E)\to\HPchain(Q)\) form a cofibre sequence.  This yields a cyclic six-term exact sequence for \(\HP_*\).
\end{theorem}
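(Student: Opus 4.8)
The plan is to reduce the assertion to the case of extensions that split in $\Cat$, where periodic cyclic excision is classical, and then to pass to a filtered colimit. Recall from the proof of Proposition~\ref{pro:locally_split_pure} that a locally split extension $I\mono E\epi Q$ in $\Indcat$ is isomorphic, as an inductive system of extensions, to a filtered colimit $\varinjlim_\alpha(I_\alpha\mono E_\alpha\epi Q_\alpha)$ of algebra extensions that split in $\Cat$, the splittings being incompatible in $\alpha$. Because tensor powers in $\Indcat$ are formed on the cofinal diagonal of the index set, the complex $\HPchain$ of an ind\nb-algebra is the corresponding inductive system of the complexes $\HPchain$ of its building blocks, and $\HP_*$ of an ind\nb-algebra is the filtered colimit of the $\HP_*$ of those building blocks, since homology commutes with filtered colimits. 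Finally, for an extension that splits in $\Cat$ the maps $\HPchain(I_\alpha)\to\HPchain(E_\alpha)\to\HPchain(Q_\alpha)$ form a cofibre sequence and there is a six\nb-term exact sequence for $\HP_*$; this is the classical linearly\nb-split case of Cuntz and Quillen \cite{Cuntz-Quillen:Excision_bivariant}, and it also follows from Theorem~\ref{the:HP_excision_pro} applied to the constant projective systems, a split extension in $\Cat$ being locally split in $\Procat$ for trivial reasons.

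Granting this, the argument is short: for each index~$\alpha$ we get a cofibre sequence $\HPchain(I_\alpha)\to\HPchain(E_\alpha)\to\HPchain(Q_\alpha)$ in the category of projective systems over $\Cat$, together with a six\nb-term exact sequence relating $\HP_*(I_\alpha)$, $\HP_*(E_\alpha)$ and $\HP_*(Q_\alpha)$. Taking the filtered colimit over $\alpha$ produces the desired cofibre sequence $\HPchain(I)\to\HPchain(E)\to\HPchain(Q)$ in projective systems over $\Indcat$, and, since filtered colimits are exact, the colimit of the six\nb-term sequences is a six\nb-term exact sequence for $\HP_*$. This is also where the asymmetry with the pro\nb-case appears: in $\Indcat$ the tensor unit $\Unit$ is a projective object, so $\Hom(\Unit,\blank)$ is exact on locally split extensions and the homological sequence is unconditional, whereas $\Unit$ need not be injective and $\Hom(\blank,\Unit)$ does not commute with filtered colimits, so there is no corresponding statement for $\HP^*$.

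The point needing care — and the step I expect to be the main obstacle — is the interplay between the filtered colimit in the ind\nb-direction and the projective\nb-limit completion built into $\HPchain$. One must verify that this completion is computed levelwise over the ind\nb-index, so that the colimit over $\alpha$ of the cofibre sequences $\HPchain(I_\alpha)\to\HPchain(E_\alpha)\to\HPchain(Q_\alpha)$ is again a cofibre sequence of projective systems over $\Indcat$ rather than merely a levelwise such sequence. Here one uses that the periodic cyclic towers have surjective structure maps, so that the relevant $\varprojlim$ is exact on them and commutes appropriately with the ind\nb-colimit; the same Mittag--Leffler property is what lets $\HP_*=\Ho_*\circ[\blank]$ turn the cofibre sequence into the stated long exact sequence. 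One should also check that the building blocks $I_\alpha,E_\alpha,Q_\alpha$ may indeed be taken to be algebras in $\Cat$, not just objects.
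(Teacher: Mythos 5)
Your reduction breaks down at its first step, and the issue you defer to the last sentence (``one should also check that the building blocks \(I_\alpha,E_\alpha,Q_\alpha\) may indeed be taken to be algebras in \(\Cat\)'') is not a routine verification but the crux, and it is generically false. An algebra in \(\Indcat\) is an inductive system \((E_\alpha)\) together with a multiplication \(E\otimes E\to E\) as a morphism of ind\nb-objects; concretely this gives maps \(E_\alpha\otimes E_\alpha\to E_{\gamma(\alpha)}\) into \emph{later} entries, and the individual entries \(E_\alpha\) carry no multiplication of their own. (Already for \(\diss\) of a bornological algebra the entries \(A_S\) are not subalgebras, since a bounded disk need not satisfy \(S\cdot S\subseteq S\).) Likewise, the split extensions \(I_\beta\mono E'_{\beta,\alpha}\epi Q_\alpha\) produced by the argument of Proposition~\ref{pro:locally_split_pure} are split extensions of \emph{objects}: the local sections \(s_\alpha\) are linear, not multiplicative, and the pullbacks \(E'_{\beta,\alpha}\) are not algebras. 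Consequently the complexes \(\HPchain(I_\alpha)\), \(\HPchain(E_\alpha)\), \(\HPchain(Q_\alpha)\) are simply not defined, there are no levelwise cofibre sequences to pass to the colimit of, and the classical split\nb-extension theorem of Cuntz and Quillen cannot be invoked entrywise. A secondary problem is your claim that \(\HP_*\) of an ind\nb-algebra is the filtered colimit of the \(\HP_*\) of its entries: since \(\HPchain\) is a chain complex of \emph{projective} systems even for an algebra in \(\Cat\), computing \(\HP_*\) involves a projective limit that does not commute with the filtered colimit in the ind\nb-direction, and your appeal to a Mittag--Leffler property does not address the fact that the objects being interchanged do not exist in the first place.

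The paper's proof circumvents exactly this obstacle: rather than decomposing the extension into split algebra extensions, it observes that the contracting homotopy for \(\cone\bigl(\HPchain(I)\to\cone(\HPchain(E)\to\HPchain(Q))\bigr)\) in the split case is given by an explicit universal formula built only from the multiplication of \(E\) and the linear section \(s\), verified using only associativity and \(p\circ s=\Id\). For a locally split extension one writes this same formula down \emph{locally}, using on each entry the (mutually incompatible) local sections \(s_\alpha\colon Q_\alpha\to E_\beta\); the resulting maps are defined on each entry of the relevant inductive systems with values in later entries and satisfy the homotopy identities there. This yields a local contracting homotopy, i.e.\ exactness of the cone for the locally split exact structure, which is what the cofibre sequence assertion means. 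That argument never requires the entries to be algebras. If you want to keep a colimit-flavoured argument you would have to reformulate it at the level of the chain complexes (whose entries are finite direct sums of tensor powers and hence do make sense entrywise), at which point you are essentially reproducing the paper's local-homotopy argument.
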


\begin{proof}
  The proof of the Excision Theorem in~\cite{Meyer:Excision} yields this stronger result, as observed in passing in \cite{Meyer:HLHA}*{Remark 4.43}.  In fact, any proof of the excision theorem for split extensions of topological algebras that I know yields this stronger result.  The idea of the argument is as follows.  The proof of the excision theorem for split extensions is potentially constructive in the sense that one can write down an explicit contracting homotopy for the cone of the map \(\HPchain(I)\to \cone\bigl(\HPchain(E)\to\HPchain(Q)\bigr)\).  This explicit formula only uses the multiplication in~\(E\) and the section \(s\colon Q\to E\) of the extension because this is all the data that there is.  The computation checking that this formula works uses only the associativity of the multiplication and the fact that~\(s\) is a section because that is all we know.

  When we have a locally split extension, we may still write down exactly the same formula locally and check that it still works where it is defined.  These locally defined contracting homotopies may be incompatible because we may use different sections on different entries of our inductive system.  Nevertheless, they provide a \emph{local} contracting homotopy and thus establish the desired cofibre sequence.
\end{proof}

Theorem~\ref{the:HP_excision_ind} applies, for instance, to extensions of bornological algebras with nuclear quotient, which become locally split extensions in \(\Indban\).  In particular, this covers the algebra of smooth functions with compact support \(\Cinf_\cpt(Y)\).

\section{Conclusion and outlook}
\label{sec:conclusion}

We formulated and proved a general version of Wodzicki's Excision Theorem for the Hochschild homology of pure algebra extensions with homologically unital kernel in any exact symmetric monoidal category.  The main issue here was to find the right setup to formulate a general Excision Theorem that applies, in particular, to extensions of Fr\'echet algebras with nuclear quotient.

As an application, we computed the continuous Hochschild, cyclic and periodic cyclic homology for the Fr\'echet algebra of Whitney functions on a closed subset of a smooth manifold.

Periodic cyclic homology could satisfy an excision theorem for all pure algebra extensions in \(\Q\)\nb-linear symmetric monoidal categories.  But I do not know how to establish such a general result.  We only considered two special cases involving pro-locally split extensions of pro-algebras and ind-locally split extensions of ind-algebras, where the excision theorem was already known.  This was enough to compute the periodic cyclic homology for algebras of smooth functions on closed subsets of smooth manifolds.

\begin{bibdiv}
  \begin{biblist}
\bib{Brasselet-Pflaum:Whitney}{article}{
  author={Brasselet, Jean-Paul},
  author={Pflaum, Markus J.},
  title={On the homology of algebras of Whitney functions over subanalytic sets},
  journal={Ann. of Math. (2)},
  volume={167},
  date={2008},
  number={1},
  pages={1--52},
  issn={0003-486X},
  review={\MRref {2373151}{}},
}

\bib{Brodzki-Plymen:Periodic}{article}{
  author={Brodzki, Jacek},
  author={Plymen, Roger},
  title={Periodic cyclic homology of certain nuclear algebras},
  language={English, with English and French summaries},
  journal={C. R. Acad. Sci. Paris S\'er. I Math.},
  volume={329},
  date={1999},
  number={8},
  pages={671--676},
  issn={0764-4442},
  review={\MRref {1724090}{2001k:46116}},
}

\bib{Brylinski-Nistor:Cyclic_etale}{article}{
  author={Brylinski, Jean-Luc},
  author={Nistor, Victor},
  title={Cyclic cohomology of \'etale groupoids},
  journal={$K$\nobreakdash -Theory},
  volume={8},
  date={1994},
  number={4},
  pages={341--365},
  issn={0920-3036},
  review={\MRref {1300545}{96c:19001}},
}

\bib{Connes:Noncommutative_Diffgeo}{article}{
  author={Connes, Alain},
  title={Noncommutative differential geometry},
  journal={Inst. Hautes \'Etudes Sci. Publ. Math.},
  number={62},
  date={1985},
  pages={257--360},
  issn={0073-8301},
  review={\MRref {823176}{87i:58162}},
}

\bib{Cortinas-Valqui:Excision}{article}{
  author={Corti\~nas, Guillermo},
  author={Valqui, Christian},
  title={Excision in bivariant periodic cyclic cohomology: a categorical approach},
  journal={$K$\nobreakdash -Theory},
  volume={30},
  date={2003},
  number={2},
  pages={167--201},
  issn={0920-3036},
  review={\MRref {2064238}{2005c:19005}},
}

\bib{Cuntz-Quillen:Excision_bivariant}{article}{
  author={Cuntz, Joachim},
  author={Quillen, Daniel},
  title={Excision in bivariant periodic cyclic cohomology},
  journal={Invent. Math.},
  volume={127},
  date={1997},
  number={1},
  pages={67--98},
  issn={0020-9910},
  review={\MRref {1423026}{98g:19003}},
}

\bib{Grothendieck:Produits}{book}{
  author={Grothendieck, Alexandre},
  title={Produits tensoriels topologiques et espaces nucl\'eaires},
  language={French},
  series={Mem. Amer. Math. Soc.},
  volume={16},
  date={1955},
  pages={140},
  review={\MRref {0075539}{17,763c}},
}

\bib{Guccione-Guccione:Excision}{article}{
  author={Guccione, Jorge A.},
  author={Guccione, Juan J.},
  title={The theorem of excision for Hochschild and cyclic homology},
  journal={J. Pure Appl. Algebra},
  volume={106},
  date={1996},
  number={1},
  pages={57--60},
  issn={0022-4049},
  review={\MRref {1370842}{96m:16013}},
}

\bib{Hogbe-Nlend:Completions}{article}{
  author={Hogbe-Nlend, Henri},
  title={Compl\'etion, tenseurs et nucl\'earit\'e en bornologie},
  language={French},
  journal={J. Math. Pures Appl. (9)},
  volume={49},
  date={1970},
  pages={193--288},
  review={\MRref {0279557}{43\,\#5279}},
}

\bib{Hogbe-Nlend:Bornologies}{book}{
  author={Hogbe-Nlend, Henri},
  title={Bornologies and functional analysis},
  publisher={North-Holland Publishing Co.},
  place={Amsterdam},
  date={1977},
  pages={xii+144},
  isbn={0-7204-0712-5},
  review={\MRref {0500064}{58\,\#17774}},
}

\bib{Hogbe-Nlend-Moscatelli:Nuclear}{book}{
  author={Hogbe-Nlend, Henri},
  author={Moscatelli, Vincenzo Bruno},
  title={Nuclear and conuclear spaces},
  series={North-Holland Mathematics Studies},
  volume={52},
  publisher={North-Holland Publishing Co.},
  place={Amsterdam},
  date={1981},
  pages={x+275},
  isbn={0-444-86207-2},
  review={\MRref {639899}{83i:46004}},
}

\bib{Keller:Chain_stable}{article}{
  author={Keller, Bernhard},
  title={Chain complexes and stable categories},
  journal={Manuscripta Math.},
  volume={67},
  date={1990},
  number={4},
  pages={379--417},
  issn={0025-2611},
  review={\MRref {1052551}{91h:18006}},
}

\bib{Keller:Handbook}{article}{
  author={Keller, Bernhard},
  title={Derived categories and their uses},
  conference={ title={Handbook of algebra, Vol.~1}, },
  book={ publisher={North-Holland}, place={Amsterdam}, },
  date={1996},
  pages={671--701},
  review={\MRref {1421815}{98h:18013}},
}

\bib{Meyer:Excision}{article}{
  author={Meyer, Ralf},
  title={Excision in entire cyclic cohomology},
  journal={J. Eur. Math. Soc. (JEMS)},
  volume={3},
  date={2001},
  number={3},
  pages={269--286},
  issn={1435-9855},
  review={\MRref {1848947}{2002g:46128}},
}

\bib{Meyer:Embed_derived}{article}{
  author={Meyer, Ralf},
  title={Embeddings of derived categories of bornological modules},
  date={2004},
  status={eprint},
  note={\arxiv {math.FA/0410596}},
}

\bib{Meyer:HLHA}{book}{
  author={Meyer, Ralf},
  title={Local and analytic cyclic homology},
  series={EMS Tracts in Mathematics},
  volume={3},
  publisher={European Mathematical Society (EMS), Z\"urich},
  date={2007},
  pages={viii+360},
  isbn={978-3-03719-039-5},
  review={\MRref {2337277}{}},
}

\bib{Meyer:Smooth_rough}{article}{
  author={Meyer, Ralf},
  title={Smooth and rough modules over self-induced algebras},
  date={2009},
  status={eprint},
  note={\arxiv {0911.3882}},
}

\bib{Neeman:Derived_Exact}{article}{
  author={Neeman, Amnon},
  title={The derived category of an exact category},
  journal={J. Algebra},
  volume={135},
  date={1990},
  number={2},
  pages={388--394},
  issn={0021-8693},
  review={\MRref {1080854}{91m:18016}},
}

\bib{Prosmans:Derived_limits}{article}{
  author={Prosmans, Fabienne},
  title={Derived limits in quasi-abelian categories},
  journal={Bull. Soc. Roy. Sci. Li\`ege},
  volume={68},
  date={1999},
  number={5-6},
  pages={335--401},
  issn={0037-9565},
  review={\MRref {1743618}{2001b:18011}},
}

\bib{Prosmans:Derived_analysis}{article}{
  author={Prosmans, Fabienne},
  title={Derived categories for functional analysis},
  journal={Publ. Res. Inst. Math. Sci.},
  volume={36},
  date={2000},
  number={1},
  pages={19--83},
  issn={0034-5318},
  review={\MRref {1749013}{2001g:46156}},
}

\bib{Quillen:Higher_K}{article}{
  author={Quillen, Daniel},
  title={Higher algebraic $K$\nobreakdash -theory. I},
  conference={ place={Battelle Memorial Inst., Seattle, Wash.}, date={1972}, },
  book={ title={Algebraic $K$\nobreakdash -theory, I: Higher $K$\nobreakdash -theories}, series={Lecture Notes in Math.}, volume={341}, publisher={Springer}, place={Berlin}, },
  date={1973},
  pages={85--147},
  review={\MRref {0338129}{49\,\#2895}},
}

\bib{Saavedra:Tannakiennes}{book}{
  author={Saavedra Rivano, Neantro},
  title={Cat\'egories Tannakiennes},
  language={French},
  series={Lecture Notes in Mathematics},
  volume={265},
  publisher={Springer-Verlag},
  place={Berlin},
  date={1972},
  pages={ii+418},
  review={\MRref {0338002}{49\,\#2769}},
}

\bib{Schneiders:Quasi-Abelian}{article}{
  author={Schneiders, Jean-Pierre},
  title={Quasi-abelian categories and sheaves},
  language={English, with English and French summaries},
  journal={M\'em. Soc. Math. Fr. (N.S.)},
  date={1999},
  number={76},
  pages={vi+134},
  issn={0249-633X},
  review={\MRref {1779315}{2001i:18023}},
}

\bib{Teleman:Localization_Hochschild}{article}{
  author={Teleman, Nicolae},
  title={Localization of the Hochschild homology complex for fine algebras},
  conference={ title={Proceedings of ``BOLYAI 200'' International Conference on Geometry and Topology}, },
  book={ publisher={Cluj Univ. Press, Cluj-Napoca}, },
  date={2003},
  pages={169--184},
  review={\MRref {2112623}{2005k:16022}},
}

\bib{Tougeron:Ideaux}{book}{
  author={Tougeron, Jean-Claude},
  title={Id\'eaux de fonctions diff\'erentiables},
  series={Ergebnisse der Mathematik und ihrer Grenzgebiete},
  volume={71},
  publisher={Springer-Verlag},
  place={Berlin},
  date={1972},
  pages={vii+219},
  review={\MRref {0440598}{55\,\#13472}},
}

\bib{Wodzicki:Cyclic_pdo}{article}{
  author={Wodzicki, Mariusz},
  title={Cyclic homology of pseudodifferential operators and noncommutative Euler class},
  language={English, with French summary},
  journal={C. R. Acad. Sci. Paris S\'er. I Math.},
  volume={306},
  date={1988},
  number={6},
  pages={321--325},
  issn={0249-6291},
  review={\MRref {932347}{89h:58189}},
}

\bib{Wodzicki:Excision}{article}{
  author={Wodzicki, Mariusz},
  title={Excision in cyclic homology and in rational algebraic K\nobreakdash -theory},
  journal={Ann. of Math. (2)},
  volume={129},
  date={1989},
  number={3},
  pages={591--639},
  issn={0003-486X},
  review={\MRref {997314}{91h:19008}},
}
  \end{biblist}
\end{bibdiv}
\end{document}